\newcommand{\1}{\mathbbm{1}}
\newcommand{\E}{\mathbb{E}}
\newcommand{\Esp}[1]{\mathbb{E}\left[#1\right]}
\renewcommand{\P}{\mathbb{P}}
\newcommand{\R}{\mathbb{R}}
\newcommand{\jp}{j'}
\newcommand{\kp}{k'}
\newcommand{\n}{n}
\newcommand{\nl}{\n_{\ell}}
\newcommand{\nlun}{\n_{\ell+1}}
\newcommand{\nun}{\n_{1}}
\newcommand{\Rbar}{\overline{R}}
\newcommand{\Rbarli}{\Rbar_{\ell}^{(i)}}
\newcommand{\Rji}{R_{j}^{(i)}}
\newcommand{\SnunL}{S_{\n}\left(\nun,\ldots,n_L\right)}
\newcommand{\Xij}{X_{i,j}}
\newcommand{\Xijp}{X_{i,\jp}}
\newcommand{\Xik}{X_{i,k}}
\newcommand{\Xikp}{X_{i,\kp}}
\theoremstyle{plain}
\newtheorem{lem}{Lemma}
\newtheorem{thm}{Theorem}
\theoremstyle{remark}
\newtheorem{rem}{Remark}
\newcommand{\labitem}[2]{%
\def\@itemlabel{\textbf{#1}}
\item
\def\@currentlabel{#1}\label{#2}}
\title[Nonparametric homogeneity tests and multiple change-point estimation]{Nonparametric homogeneity tests and multiple change-point estimation
       for analyzing large Hi-C data matrices}
\author{V.\,Brault, S.\,Ouadah, L.\,Sansonnet and C.\,L\'evy-Leduc}
 \address{UMR MIA-Paris, AgroParisTech, INRA, Université 
 Paris-Saclay, France}
\email{vincent.brault@agroparistech.fr}
\date{}
\begin{document}

\begin{abstract}
We propose a novel nonparametric approach for estimating the location
of block boundaries (change-points) of non-overlapping blocks in a
random symmetric matrix which consists of random variables having their distribution changing from one block to the other.
Our method is based on a nonparametric two-sample homogeneity test for matrices that we extend to the more general case of several groups.
We first provide some theoretical results for the two associated test statistics
and we explain how to derive change-point location estimators.
Then, some numerical experiments are given in order to support our claims.
Finally, our approach is applied to Hi-C data which are used in molecular
biology for better understanding 
the influence of the chromosomal conformation on the cells functioning.
\end{abstract}

\thanks{Vincent Brault would like to thank the French National
  Research Agency ANR which supported this research through the ABS4NGS project (ANR-11-BINF-0001-06).}

\keywords{Nonparametric tests, change-point estimation, Hi-C data}

\maketitle



\section{Introduction}
Detecting and localizing changes in the distribution of random variables is a major
statistical issue that arises in many fields such as
the surveillance of industrial processes, see \cite{Basseville:1993},
the detection of anomalies in internet traffic data, see
\cite{Tartakovsky:2006} and \cite{Levy:Roueff:2009}
or in molecular biology. In the latter field, several change-point detection
methods have been designed for dealing with different kinds of data such as
CNV (Copy Number Variation), see
\cite{picard:robin:lavielle:vaisse:daudin:2005,vert:bleakley:2010},
RNAseq data, see \cite{Cleynen:2013} and more recently Hi-C data which
motivated this work.

The Hi-C technology corresponds to one of the most recent chromosome conformation capture method
that has been developed to better understand the influence of the chromosomal
conformation on the cells functioning.
This technology is based on a deep sequencing approach
and provides read pairs corresponding to pairs of genomic loci that
physically interacts in the nucleus, see \cite{lieberman2009comprehensive}.
The raw measurements provided by Hi-C data are often summarized as a square matrix where each entry
at row $i$ and column $j$ stands for the total number of read pairs matching in position
$i$ and position $j$, respectively, see \cite{dixon2012topological}
for further details. 
Blocks of different intensities arise among this matrix, revealing
interacting genomic regions among which some have already been confirmed to host co-regulated
genes. The purpose of the statistical analysis is then to provide a fully automated and efficient
strategy to determine a decomposition of the matrix in non-overlapping blocks, which gives, as a by-product,
a list of non-overlapping interacting chromosomic regions.  It has
to be noticed that this issue has already been addressed by
\cite{levy2014two} in the particular framework where the mean of the observations changes from
one diagonal block to the other and is constant everywhere else. In
this latter work, the authors use a parametric approach based on the
maximization of the likelihood. In the following, we shall address the
case where the non-overlapping
blocks are not diagonal anymore by using a nonparametric method. Our goal will thus be to design
an efficient, nonparametric and fully automated method to find the block boundaries, also called change-points, of non-overlapping blocks in
large matrices which can be modeled as matrices of random variables
having their distribution changing from one block to the other.

To the best of our knowledge the most recent paper dealing with the nonparametric change-point estimation issue is the one of
\cite{matteson:2014}. Their approach allows them to retrieve
change-points within $n$ $K$-dimensional multivariate observations where
$K$ is fixed and $n$ may be large. It is based on the use of an
empirical divergence measure derived from the divergence measure
introduced by \cite{Szekely:Rizzo:2005}. Note that this methodology cannot be used in
our framework since we have to deal with matrices having both their
rows and columns that may be large. Another approach based on ranks has also been
proposed by \cite{lun15} in the same framework as
\cite{matteson:2014}. More precisely, the approach proposed by
\cite{lun15} consists in extending the classical Wilcoxon and
Kruskal-Wallis statistics (\cite{lehmann2006nonparametrics}) to the multivariate case.

In this paper, we propose a nonparametric change-point estimation
approach based on nonparametric homogeneity tests. More precisely, we
shall generalize the approach of \cite{lun15} to the case where we
have to deal with large matrices instead of fixed multidimensional vectors.

The paper is organized as follows. We first propose in Sections
\ref{stat2} and \ref{statl} nonparametric homogeneity
tests for two-samples and several samples, respectively. In Section
\ref{sec:change-point}, we deduce from these tests a nonparametric procedure for
estimating the block boundaries of a matrix of random variables
having their distribution changing from one block to the other. These
methodologies are then illustrated by
some numerical experiments in Section \ref{sec:num_exp}. An
application to real Hi-C data is also given in Section
\ref{sec:real_data}.
Finally, the proofs of our theoretical results are given in Section \ref{sec:proofs}.









\section{Homogeneity tests and multiple change-point estimation} \label{sec:homgeneity_tests}

\subsection{Two-sample homogeneity test} \label{stat2}
\subsubsection{Statistical framework}\label{sec:stat_fram_two_sample}

Let $\textbf{X}=(X_{i,j})_{1\leq i,j\leq n}$ be a symmetric matrix such that the $X_{i,j}$'s are independent random variables
when $i \geq j$. 
Observe that $\textbf{X}$ can be rewritten as follows: $\textbf{X}=(\textbf{X}^{(1)},\ldots,\textbf{X}^{(n)})$, where $\textbf{X}^{(j)}=(X_{1,j},\ldots,X_{n,j})'$
denotes the $j$th column of $\textbf{X}$.

Let $n_1$ be a given integer in $\{1,\dots,n\}$. The goal of this section is to propose a statistic to test the null hypothesis
$(H_0)$: ``$(\textbf{X}^{(1)},\ldots,\textbf{X}^{(n_1)})$ and $(\textbf{X}^{(n_1+1)},\ldots,\textbf{X}^{(n)})$ are identically distributed random vectors''
against the alternative hypothesis $(H_1)$:
``$(\textbf{X}^{(1)},\ldots,\textbf{X}^{(n_1)})$ has the distribution $\P_1$ and 
$(\textbf{X}^{(n_1+1)},\ldots,\textbf{X}^{(n)})$ has the distribution $\P_2$, where $\P_1 \neq \P_2$''.
Note that the hypotheses $(H_0)$ and $(H_1)$ can be reformulated as
follows. The null hypothesis $(H_0)$ means that for all
$i\in\{1,\ldots,n\}$, $X_{i,1},\ldots,X_{i,n}$ are 
independent and identically distributed (i.i.d) random variables and
the alternative hypothesis $(H_1)$ means that there exists
$i\in\{1,\ldots,n\}$ such that $X_{i,1},\ldots,X_{i,n_1}$ have the
distribution $\P^{i}_1$ and $X_{i,n_1+1},\ldots,X_{i,n}$ have the distribution $\P^i_2$, with $\P^i_1\neq \P^i_2$.

For deciding whether $(H_0)$ has to be rejected or not, we propose to use a test statistic inspired by the one designed by \cite{lun15}
which extends the well-known Wilcoxon-Mann-Whitney rank-based test to deal with multivariate data.
Our statistical test can thus be seen as a way to decide whether $n_1$ can be considered as a potential change in the distribution of the $\Xij$'s or not.
%
%
More precisely, the test statistic that we propose for assessing the presence of the potential change $n_1$ is defined by
\begin{equation} \label{def:sn1}
S_n(n_1) = \sum_{i=1}^n U_{n,i}^2(n_1),
\end{equation}
where
\[U_{n,i}(n_1) = \frac{1}{\sqrt{nn_1(n-n_1)}} \sum_{j_0=1}^{n_1}\sum_{j_1=n_1+1}^{n} h(X_{i,j_0},X_{i,j_1}),\]
with $h(x,y)=\1_{\{x \leq y\}}-\1_{\{y \leq x\}}$.

The great difference between our framework and the one considered by
\cite{lun15} is that, in their framework, the vectors $\textbf{X}^{(j)}$ are $K$-dimensional with $K$ fixed
whereas, in our framework, the vectors are $n$-dimensional where $n$ may
be large. 

Note that the statistic $U_{n,i}$ can also be written by using the rank of $X_{i,j}$ among $\left(X_{i,1},\ldots,X_{i,n}\right)$. Indeed,
\begin{equation}\label{eq:Uni_rank}
U_{n,i}(n_1) = \frac{2}{\sqrt{nn_1(n-n_1)}} \sum_{j_0=1}^{n_1}\left(\frac{n+1}{2}-R_{j_0}^{(i)}\right)=\frac{2}{\sqrt{nn_1(n-n_1)}} \sum_{j_1=n_1+1}^{n}\left(R_{j_1}^{(i)}-\frac{n+1}{2}\right),
\end{equation}
where
\begin{equation}\label{rank}
\Rji=\sum_{k=1}^{n}\1_{\{X_{i,k}\leq X_{i,j}\}}
\end{equation}
is the rank of $X_{i,j}$ among $\left(X_{i,1},\ldots,X_{i,n}\right)$. 
This alternative form of $U_{n,i}$ will be used 
in Section~\ref{statl} in order to extend the two-sample homogeneity
test to deal with the multiple sample case.

\subsubsection{Theoretical results}


If the cumulative distribution function of the $X_{i,j}$'s is assumed to be continuous
then the following theorem establishes that the test statistic $S_n(n_1)$ is properly normalized,
namely $S_n(n_1)$ is bounded in probability as $n$ tends to infinity.
\begin{thm}\label{thm_OP1_1rupture}
Let $\textbf{X}=(X_{i,j})_{1\leq i,j\leq n}$ be a symmetric matrix of
random variables $X_{i,j}$ such that the $X_{i,j}$'s are i.i.d. when $i \geq j$.
Assume that the cumulative distribution function of the
$X_{i,j}$'s is continuous and that there exists $\tau_1 \in (0,1)$ such that $n_1/n \rightarrow \tau_1$ as $n\to\infty$.
Then,
\begin{equation*}
T_n(n_1):=n^{-1/2}\big(S_n(n_1)-\E(S_n(n_1)\big)=O_P(1) \mbox{ as }
n\to\infty,
\end{equation*}
where
\[\E(S_n(n_1))=\frac{n+1}{3}.\]
\end{thm}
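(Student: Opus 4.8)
The plan is to prove the two claims separately: an exact evaluation of $\E(S_n(n_1))$, and the bound $\mathrm{Var}(S_n(n_1))=O(n)$; the assertion $T_n(n_1)=O_P(1)$ then follows immediately from Markov's inequality, since $\P\big(|T_n(n_1)|>M\big)\le \mathrm{Var}(S_n(n_1))/(M^2 n)$.

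For the expectation I would start from the rank representation \eqref{eq:Uni_rank}. Since the common cdf of the $\Xij$ is continuous, almost surely the $n$ entries of each row $i$ are pairwise distinct, so under $(H_0)$ the vector $(R^{(i)}_1,\dots,R^{(i)}_n)$ is a uniformly distributed permutation of $\{1,\dots,n\}$. Hence $U_{n,i}(n_1)$ is, up to the stated normalisation, a centred Wilcoxon rank‑sum statistic: with $W_i:=\sum_{j_1=n_1+1}^{n}R^{(i)}_{j_1}$, the mean and variance of a sum of $n-n_1$ values sampled without replacement from $\{1,\dots,n\}$ give $\E(W_i)=(n-n_1)(n+1)/2$ and $\mathrm{Var}(W_i)=n_1(n-n_1)(n+1)/12$, whence $\E(U_{n,i}(n_1))=0$ and $\E(U_{n,i}^2(n_1))=\mathrm{Var}(U_{n,i}(n_1))=(n+1)/(3n)$. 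As the $\Xij$ are identically distributed across $i$, summing over $i$ yields $\E(S_n(n_1))=(n+1)/3$.

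For the variance I would write $\mathrm{Var}(S_n(n_1))=\sum_{i}\mathrm{Var}(U_{n,i}^2)+\sum_{i\neq i'}\mathrm{Cov}(U_{n,i}^2,U_{n,i'}^2)$ and deal with the two sums in turn. The diagonal part is routine: $U_{n,i}(n_1)$ is a rescaled two‑sample $U$‑statistic with bounded kernel $h$, so by the Hoeffding decomposition its normalised fourth moment is $O(1)$ uniformly in $n$ (here $n_1/n\to\tau_1\in(0,1)$ is used so that $n_1\asymp n-n_1\asymp n$), giving $\mathrm{Var}(U_{n,i}^2)=O(1)$ and $\sum_i\mathrm{Var}(U_{n,i}^2)=O(n)$.

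The cross terms are the part I expect to be the main obstacle, because the symmetry of $\bX$ makes distinct rows dependent. The key observation is that two distinct rows $i$ and $i'$ share \emph{exactly one} entry, $Z:=X_{i,i'}=X_{i',i}$ (located at position $i'$ in row $i$ and at position $i$ in row $i'$), all the other underlying i.i.d.\ variables entering the two rows being disjoint; hence $U_{n,i}(n_1)$ and $U_{n,i'}(n_1)$ are conditionally independent given $Z$, so $\mathrm{Cov}(U_{n,i}^2,U_{n,i'}^2)=\mathrm{Cov}\big(\E(U_{n,i}^2\mid Z),\,\E(U_{n,i'}^2\mid Z)\big)$. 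I would then split $\sqrt{nn_1(n-n_1)}\,U_{n,i}(n_1)=A_i+B_i$, where $A_i$ gathers the at most $\max(n_1,n-n_1)$ kernel terms $h(\cdot,\cdot)$ involving the shared position and $B_i$ the rest; here $B_i$ is independent of $Z$ and $\E(B_i)=0$ (continuity of the cdf makes each $\E(h)=0$), so $\E(U_{n,i}^2\mid Z)=\big(\E(A_i^2\mid Z)+\E(B_i^2)\big)/(nn_1(n-n_1))$, and since $|A_i|\le n$ while $nn_1(n-n_1)\asymp n^3$, this conditional expectation deviates from its mean by $O(1/n)$ uniformly. Therefore $\mathrm{Var}(\E(U_{n,i}^2\mid Z))=O(n^{-2})$, Cauchy–Schwarz gives $|\mathrm{Cov}(U_{n,i}^2,U_{n,i'}^2)|=O(n^{-2})$, and the $O(n^2)$ pairs contribute only $O(1)$. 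Adding the two sums gives $\mathrm{Var}(S_n(n_1))=O(n)$, which is all that is needed; the only genuinely delicate point is the cross‑row estimate, everything else being bookkeeping.
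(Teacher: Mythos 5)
Your route is genuinely different from the paper's and, up to one misstated step, it works. For the expectation you use the classical mean and variance of a rank sum under sampling without replacement (Wilcoxon), whereas the paper expands $U_{n,i}^2(n_1)$ in terms of the kernel $h$ and evaluates the mixed moments through Lemma~\ref{lemma_1rupture}; both give $\E(S_n(n_1))=(n+1)/3$. For tightness the paper never computes $\mathrm{Var}(S_n(n_1))$: it splits $S_n(n_1)-\E(S_n(n_1))$ into four centred sums $A,B,C,D$ (with $A=0$ a.s.) and bounds $\E|B|,\E|C|,\E|D|$ by Markov, Cauchy--Schwarz and a count of the index tuples with non-vanishing expectation, the between-row dependence created by the symmetry of $\bX$ being absorbed into the counting for the cross term $D$. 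Your alternative -- Chebyshev plus $\mathrm{Var}(S_n(n_1))=O(n)$, with the key remark that two distinct rows share exactly one underlying variable $Z=X_{i,i'}$, so that $U_{n,i}^2$ and $U_{n,i'}^2$ are conditionally independent given $Z$ and $\mathrm{Cov}(U_{n,i}^2,U_{n,i'}^2)=\mathrm{Cov}\big(\E(U_{n,i}^2\mid Z),\E(U_{n,i'}^2\mid Z)\big)$ -- isolates the effect of the symmetry more transparently than the paper's tuple counting, and the diagonal bound $\E(U_{n,i}^4)=O(1)$ (using $n_1\asymp n-n_1\asymp n$) is standard.

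The one flaw is the identity $\E(U_{n,i}^2\mid Z)=\big(\E(A_i^2\mid Z)+\E(B_i^2)\big)/(nn_1(n-n_1))$: you silently drop the cross term $2\,\E(A_iB_i\mid Z)$, which does not vanish because $A_i$ and $B_i$ are \emph{not} conditionally independent given $Z$ -- they share all the non-shared entries of row $i$. For instance, with $X,Y$ i.i.d.\ and independent of $z$, $\E\big[h(X,z)h(X,Y)\big]=2F(z)\big(1-F(z)\big)\neq 0$, so terms of $A_i$ and $B_i$ with a common column index contribute. The repair is immediate and uses exactly your own counting: conditionally on $Z$, a pair of kernel terms with disjoint column indices has zero conditional expectation (since $\E\,h=0$), and there are at most $O(n^2)$ pairs sharing an index, each bounded by $1$ in absolute value, so $\big|\E(A_iB_i\mid Z)-\E(A_iB_i)\big|=O(n^2)$ uniformly in $Z$; after division by $nn_1(n-n_1)\asymp n^3$ this is again $O(1/n)$, so $\mathrm{Var}\big(\E(U_{n,i}^2\mid Z)\big)=O(n^{-2})$, the $O(n^2)$ cross terms still contribute $O(1)$, and your conclusion $\mathrm{Var}(S_n(n_1))=O(n)$, hence $T_n(n_1)=O_P(1)$, stands.
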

The proof of Theorem \ref{thm_OP1_1rupture} is given in Section~\ref{proof2}.

Observe that the assumptions under which Theorem
\ref{thm_OP1_1rupture} is established correspond to the null
hypothesis $(H_0)$ described in Section
\ref{sec:stat_fram_two_sample}. Hence, we shall reject this null
hypothesis when 
\begin{equation}\label{eq:reject_region}
T_n(n_1)>s,
\end{equation}
where $s$ is a threshold. A way of computing this
threshold in practical situations will be given in
Section \ref{sec:pract_calibration}.


\subsection{Multiple-sample homogeneity test} \label{statl}
The goal of this section is to extend the two-sample homogeneity test of the previous section to deal with the multiple sample case.


\subsubsection{Statistical framework}

Let us assume that $\textbf{X}=(X_{i,j})_{1\leq i,j\leq n}$ is still a symmetric matrix 
such that the $X_{i,j}$'s are independent random variables when $i \geq j$. Let $0=n_0<n_1<\ldots<n_L<n_{L+1}=n$ be $L$
integers given in $\{1,\dots,n-1\}$. We propose in this section a statistic to test the null hypothesis:
``$(\textbf{X}^{(1)},\ldots,\textbf{X}^{(n_1)})$, $(\textbf{X}^{(n_1+1)},\ldots,\textbf{X}^{(n_2)})$, \ldots,$(\textbf{X}^{(n_L+1)},\ldots,\textbf{X}^{(n)})$ have 
the same distribution'' against the alternative hypothesis: 
``there exists $\ell\in\{1,\ldots,L\}$ such that
$(\textbf{X}^{(n_{\ell-1}+1)},\ldots,\textbf{X}^{(n_\ell)})$ has the
distribution $\P_\ell$ and
$(\textbf{X}^{(n_\ell+1)},\ldots,\textbf{X}^{(n_{\ell+1})})$ has the
distribution $\P_{\ell+1}$, where $\P_\ell \neq \P_{\ell+1}$''.


The homogeneity test presented in the previous section for two groups can be extended in order to deal with $L+1$ groups instead of two by using the 
following statistic:
\begin{eqnarray}\label{Def:Stat}
\SnunL=
\frac{4}{\n^2}
\sum_{\ell=0}^{L}\left(\nlun-\nl\right)
\sum_{i=1}^{\n}\left(\Rbarli-\frac{\n+1}{2}\right)^2,
\end{eqnarray}
with
\begin{eqnarray}\label{Def:Rang}
\Rbarli&=&\frac{1}{\nlun-\nl}\sum_{j=\nl+1}^{\nlun}\Rji,
\end{eqnarray}
where the rank $\Rji$ of $X_{i,j}$ is defined by \eqref{rank} and $\Rbarli$ is its mean in the group $\ell$.

Let us observe that (\ref{Def:Stat}) can be seen as a natural extension of the classical Kruskal-Wallis statistic for univariate observations
to the multivariate case, see \cite[p. 181]{vdV2000}.

\begin{rem}
Note that when $L=1$, $S_n(n_1)$ defined in (\ref{Def:Stat}) boils down to  $S_n(n_1)$ defined in (\ref{def:sn1}) since
\begin{align*}
&\frac{4}{\n^2}\left[n_1\sum_{i=1}^{\n}\left(\frac{1}{n_1}\sum_{j=1}^{n_1}\Rji-\frac{\n+1}{2}\right)^2
+(n-n_1)\sum_{i=1}^{\n}\left(\frac{1}{n-n_1}\sum_{j=n_1+1}^{n}\Rji-\frac{\n+1}{2}\right)^2\right]\\
&=\frac{4}{\n^2 n_1}\sum_{i=1}^{\n}\left\{\sum_{j=1}^{n_1}\left(\Rji-\frac{\n+1}{2}\right)\right\}^2
+\frac{4}{\n^2 (n-n_1)}\sum_{i=1}^{\n}\left\{\sum_{j=n_1+1}^{n}\left(\Rji-\frac{\n+1}{2}\right)\right\}^2\\
&=\frac1n\left[\sum_{i=1}^{\n}n_1\left\{\frac{1}{\sqrt{n n_1 (n-n_1)}}\sum_{j=1}^{n_1}\left(\Rji-\frac{\n+1}{2}\right)\right\}^2\right.\\
&\left.+\sum_{i=1}^{\n}(n-n_1)\left\{\frac{1}{\sqrt{n n_1 (n-n_1)}}\sum_{j=n_1+1}^{n}\left(\Rji-\frac{\n+1}{2}\right)\right\}^2\right]
=\sum_{i=1}^n U_{n,i}^2(n_1),
\end{align*}
by using (\ref{eq:Uni_rank}), which corresponds to (\ref{def:sn1}).
\end{rem}

\subsubsection{Theoretical results}

If the cumulative distribution function of the $X_{i,j}$'s is assumed to be continuous
then the following theorem establishes that the test statistic $\SnunL$ is properly normalized,
namely $\SnunL$ is bounded in probability as $n$ tends to infinity.

\begin{thm}\label{Thm:Multiple}
Let $\textbf{X}=(X_{i,j})_{1\leq i,j\leq n}$ be a symmetric matrix of
random variables $X_{i,j}$ such that the $X_{i,j}$'s are i.i.d when $i \geq j$.
Assume that the cumulative distribution function of the $X_{i,j}$'s is continuous and that there exist $0<\tau_1<\tau_2<\ldots<\tau_L<1$ 
such that for all $\ell\in\{1,\ldots,L\}$, $\nl/n \rightarrow \tau_\ell$ as $n\to\infty$.
Then,
\[n^{-1/2}\big(\SnunL-\Esp{\SnunL}\big)=O_P(1) \mbox{ as } n\to\infty,\]
with
\[\Esp{\SnunL}=\frac{L(\n+1)}{3}.\]
\end{thm}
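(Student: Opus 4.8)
The plan is to reduce Theorem~\ref{Thm:Multiple} to Theorem~\ref{thm_OP1_1rupture} by rewriting $\SnunL$ as a sum of $L+1$ pieces, each of which is structurally a two-sample statistic (up to the change of the ``split point''), plus cross terms that we control separately. Concretely, for each $\ell\in\{0,\ldots,L\}$ introduce the centered block sum
\[
V_{n,i}^{(\ell)}=\sum_{j=\nl+1}^{\nlun}\left(\Rji-\frac{\n+1}{2}\right),
\]
so that $\Rbarli-\frac{\n+1}{2}=V_{n,i}^{(\ell)}/(\nlun-\nl)$ and hence
\[
\SnunL=\frac{4}{\n^2}\sum_{\ell=0}^{L}\frac{1}{\nlun-\nl}\sum_{i=1}^{\n}\bigl(V_{n,i}^{(\ell)}\bigr)^2 .
\]
Since $\sum_{\ell=0}^{L}V_{n,i}^{(\ell)}=\sum_{j=1}^{\n}(\Rji-\frac{\n+1}{2})=0$ for every $i$ (the ranks are a permutation of $1,\ldots,n$), the vector $(V_{n,i}^{(0)},\ldots,V_{n,i}^{(L)})$ lives on a hyperplane; this identity is exactly what will make the expectation come out to $L(\n+1)/3$ rather than $(L+1)$ times the one-block expectation, and it is also the algebraic backbone of the Kruskal--Wallis decomposition.

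First I would compute $\Esp{\SnunL}$ exactly. Under $(H_0)$ the ranks $(\Rji)_{1\le j\le n}$ are, for each fixed $i$, a uniformly random permutation of $\{1,\ldots,n\}$, independent across $i$ by the i.i.d.\ assumption on the lower triangle (here one has to be slightly careful that the symmetry of $\bX$ does not break independence across rows $i$ for the quantities that actually enter $S_n$; this is the same subtlety already handled for Theorem~\ref{thm_OP1_1rupture}, so I would invoke that argument). A direct second-moment computation for a random permutation gives $\E[(\Rji-\frac{n+1}{2})(\Rkp{}^{(i)}-\frac{n+1}{2})]=\frac{n(n+1)}{12}$ if $j=k$ and $-\frac{n+1}{12}$ if $j\ne k$, whence $\E\bigl[(V_{n,i}^{(\ell)})^2\bigr]=(\nlun-\nl)\frac{n(n+1)}{12}-(\nlun-\nl)(\nlun-\nl-1)\frac{n+1}{12}=(\nlun-\nl)\frac{(n+1)(\nlun-\nl)}{12}\cdot\frac{n-(\nlun-\nl)+1}{\nlun-\nl}$; summing $\frac{4}{n^2}\cdot\frac{1}{\nlun-\nl}$ times this over $i$ and $\ell$ and simplifying (using $\sum_\ell(\nlun-\nl)=n$) yields $\frac{(n+1)}{3}\sum_{\ell=0}^{L}\bigl(1-\frac{\nlun-\nl}{n}\bigr)=\frac{(n+1)}{3}\bigl((L+1)-1\bigr)=\frac{L(n+1)}{3}$, which is the claimed value. (This is the only place where we really need all $\tau_\ell$ distinct and in $(0,1)$ only for the final tightness statement, not for the mean.)

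For the $O_P(1)$ bound on the fluctuations, I would write
\[
n^{-1/2}\bigl(\SnunL-\Esp{\SnunL}\bigr)=\frac{4}{n^{5/2}}\sum_{\ell=0}^{L}\frac{1}{\nlun-\nl}\sum_{i=1}^{\n}\Bigl\{\bigl(V_{n,i}^{(\ell)}\bigr)^2-\E\bigl(V_{n,i}^{(\ell)}\bigr)^2\Bigr\},
\]
and bound each of the $L+1$ terms separately; since $L$ is fixed it suffices to show each term is $O_P(1)$. For a fixed $\ell$, $V_{n,i}^{(\ell)}=\sum_{j_1\in B_\ell}(R_{j_1}^{(i)}-\frac{n+1}{2})$ is, up to the normalization $\sqrt{n n_1(n-n_1)}/2$ and a relabeling, exactly the numerator appearing in $U_{n,i}(n_1)$ of \eqref{eq:Uni_rank} with $n_1$ replaced by $\nl$ or by $n-\nlun$ — i.e. it is the two-sample object from Section~\ref{stat2} applied to the partition ``block $\ell$ versus everything else''. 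Because $\tau_\ell$ and $\tau_{\ell+1}-\tau_\ell$ both lie strictly between $0$ and $1$, the ratios $\nl/n$, $(\nlun-\nl)/n$, $(n-\nlun)/n$ all converge to positive limits, so Theorem~\ref{thm_OP1_1rupture} (or, more precisely, the computation inside its proof bounding the variance of $S_n$) applies verbatim to give $\frac{4}{n^{5/2}(\nlun-\nl)}\sum_i\{(V_{n,i}^{(\ell)})^2-\E(\cdot)\}=O_P(1)$. Summing the $L+1$ bounds (finite sum) finishes the proof.

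The main obstacle I anticipate is not the mean computation (pure combinatorics of random permutations) but making the reduction of each block term to Theorem~\ref{thm_OP1_1rupture} genuinely rigorous: the theorem as stated is about the one-change statistic $S_n(n_1)$, and I need either a stated lemma controlling $\mathrm{Var}(S_n(n_1))$ uniformly (so that Chebyshev gives $O_P(1)$) or I need to re-run its proof with $B_\ell$ in place of $\{1,\ldots,n_1\}$. I would therefore isolate from the proof of Theorem~\ref{thm_OP1_1rupture} the key estimate $\mathrm{Var}(S_n(n_1))=O(n)$ uniformly over $n_1/n$ in a compact subinterval of $(0,1)$, state it as the workhorse, and apply it $L+1$ times — once for each block-versus-rest split, each of which has its ``$\tau$'' bounded away from $0$ and $1$ by hypothesis. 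A secondary, milder point is the cross-covariance between distinct blocks: since I bound each $\ell$-term in absolute value rather than exploiting cancellation, these never appear, so this is cleanly avoided at the cost of a (harmless) factor $L+1$.
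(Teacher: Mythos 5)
Your proposal is correct in substance but follows a genuinely different route from the paper. The paper proves Theorem~\ref{Thm:Multiple} from scratch: it expands $\left(\Rbarli-\frac{\n+1}{2}\right)^2$ into the terms $A_j^{(i)}$ and $B_{jj^{\prime}}^{(i)}$, computes the mean via Lemma~\ref{Lem:Esp} (giving $\Esp{A_j^{(i)}}=(n-1)(n+1)/12$ and $\Esp{B_{jj^{\prime}}^{(i)}}=-(n+1)/12$), and then controls the fluctuations by splitting the centered statistic into seven families $Z_i^{(1)},\dots,Z_i^{(7)}$ whose second moments are bounded by $n^7$ through Markov, Cauchy--Schwarz and a count of coincidences among indices. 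You instead treat each of the $L+1$ block terms $\frac{4}{\n^2(\nlun-\nl)}\sum_i \bigl(V_{n,i}^{(\ell)}\bigr)^2$ as a two-sample statistic for the partition ``block $\ell$ versus the rest'' and invoke Theorem~\ref{thm_OP1_1rupture} (or its proof) once per block, summing $L+1$ terms that are each $O_P(1)$, which is legitimate without any independence across blocks. This buys economy (no new moment computation beyond the two-sample one), at the price of two points you must make rigorous. First, the reduction requires either the invariance in law of $\bX$ under \emph{simultaneous} row-and-column permutations (column-only permutations do not preserve the law of a symmetric matrix, precisely the symmetry subtlety you flag), after which the block term has the same distribution as $\frac{n-m_\ell}{n}S_n(m_\ell)$ with $m_\ell=\nlun-\nl$; or a re-run of the proof of Theorem~\ref{thm_OP1_1rupture} with an arbitrary index set in place of $\{1,\dots,n_1\}$, which indeed goes through verbatim since that proof only uses cardinalities of index sets. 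In this connection the correct ``split'' is $n_1\mapsto \nlun-\nl$ (the block size, whose fraction tends to $\tau_{\ell+1}-\tau_\ell\in(0,1)$), not $\nl$ or $n-\nlun$ as you wrote; with that correction the hypotheses of Theorem~\ref{thm_OP1_1rupture} are met for every block, and since $L$ is fixed you do not actually need uniformity in $n_1$, only $L+1$ separate applications (the uniform variance-type bound you propose is available from the paper's moment estimates, but is more than needed). Second, your mean computation contains a slip: the variance of a rank in a uniform permutation is $(n^2-1)/12=(n-1)(n+1)/12$, not $n(n+1)/12$; your subsequent simplification in fact uses the correct value and the conclusion $\Esp{\SnunL}=L(\n+1)/3$ stands (it also follows immediately from your own reduction, since $\sum_{\ell=0}^{L}\frac{n-m_\ell}{n}\cdot\frac{n+1}{3}=\frac{L(n+1)}{3}$).
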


The proof of Theorem~\ref{Thm:Multiple} is given in Section~\ref{proofL}

Note that the $n_\ell$'s can be seen as the boundaries of groups of random variables having different distributions.
We shall explain in the next section how to derive from this theorem a methodology for estimating the $n_\ell$'s when they are assumed to be unknown.



\subsection{Multiple change-point estimation} \label{sec:change-point}

We propose in this section to use the test statistic (\ref{Def:Stat}) defined in
Section \ref{statl} to derive the location of the block
boundaries $n_1<n_2<\cdots<n_L$.
More precisely, we propose to estimate $(n_1,n_2,\cdots,n_L)$ as follows:
\begin{equation}\label{eq:max_chge_point}
(\widehat{n}_1,\cdots,\widehat{n}_L):=\textrm{Argmax}_{0<n_1<\ldots<n_L<n}\; S_n(n_1,\ldots,n_L),
\end{equation}
where $S_n(n_1,\ldots,n_L)$ is defined in (\ref{Def:Stat}).

In practice, directly maximizing (\ref{eq:max_chge_point}) is
computationally prohibitive as it corresponds to a task which
complexity exponentially grows with $L$. However, thanks to the
additive structure of (\ref{Def:Stat}), it is possible to use a
dynamic programming strategy as we shall explain hereafter. We refer here to the classical dynamic programming approach 
described in \cite{kay:1993} which can be traced back to
the note of \cite{Bellman:1961}. 

Let us introduce  the following notations
$$
\Delta(n_{\ell}+1:n_{\ell+1})=(n_{\ell+1}-n_\ell)\sum_{i=1}^{\n}\left(\Rbarli-\frac{\n+1}{2}\right)^2,
$$ 
where $\Rbarli$ is defined by (\ref{Def:Rang}) and
\begin{equation}\label{eq:dyn:def}
I_{L}(p)=\max_{1<n_1<\dots<n_{L}<n_{L+1}=p}\sum_{\ell=0}^{L}\Delta(n_{\ell}+1:n_{\ell+1}), 
\end{equation}
for $L\in\{0,1,\dots,L_{\textrm{max}}\}$ and $p\in\{2,\ldots,n\}$,
where $L_{\textrm{max}}$ is assumed to be a known upper bound for the number of block boundaries.
Observe that $I_L(p)$ satisfies the following recursive formula:
\begin{equation}\label{eq:dyn_recurs}
I_{L}(p)=\max_{n_{L}}\left\{I_{L-1}(n_{L})+\Delta(n_{L}+1:p)\right\}\;,
\end{equation}
which is proved  in Section \ref{proof_Sec.2.3}.
Thus, for solving the optimization problem (\ref{eq:max_chge_point}), we
proceed as follows. We start by computing the $\Delta(i:j)$ for all
$(i,j)$ such that
$1\leq i<j\leq n$. All the $I_0(p)$ are thus
available for $p=2,\dots,n$. Then $I_1(p)$ is computed by using
the recursion (\ref{eq:dyn_recurs}) and so on. Hence the complexity of
our algorithm is $O(n^3)$.

Figure \ref{fig:time} displays the computational times in seconds
associated with
our multiple change-point estimation strategy based on the dynamic programming
algorithm. We observe from this figure the polynomial computational time
of our procedure. For instance, it takes 15
minutes to our algorithm for processing a
$500\times 500$ matrix. 

\begin{figure}[!h]
\begin{center}
\includegraphics*[height=8cm,width=12cm]{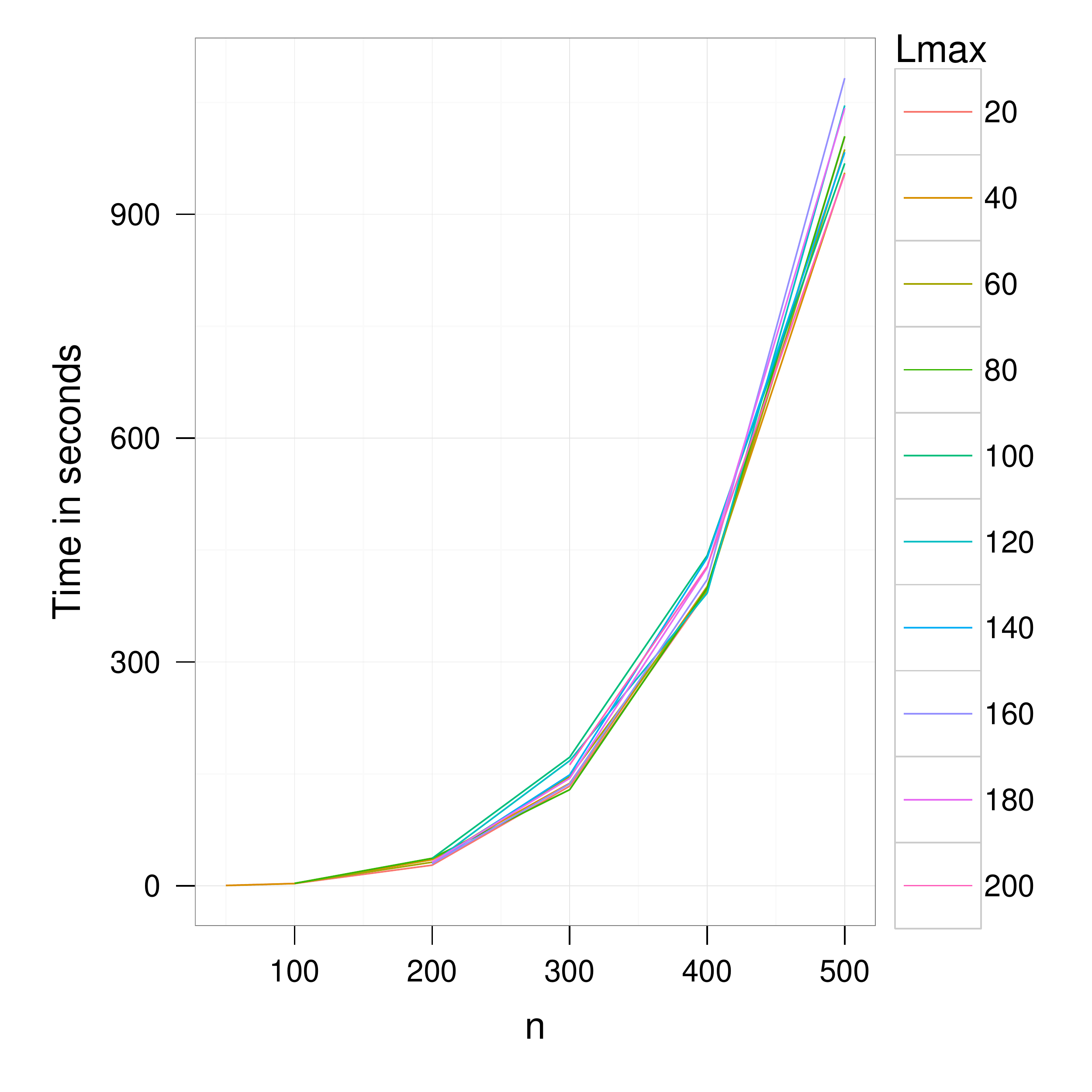}
\caption{Computational times in seconds for the dynamic programming algorithm
  described in Section \ref{sec:change-point} as a function of
  $n$ for different values of $L_{\textrm{max}}$.\label{fig:time}}
\end{center}
\end{figure}


\section{Numerical experiments} \label{sec:num_exp}

\subsection{Statistical performance of the two-sample homogeneity
  test}

\subsubsection{Practical calibration of the rejection region}\label{sec:pract_calibration}

We propose hereafter a procedure for calibrating the threshold $s$ of
the rejection region $T_n(n_1)>s$ defined in (\ref{eq:reject_region}).
For ensuring that the two-sample homogeneity test is of level
$\alpha$, an estimation of the $(1-\alpha)$ quantile of $T_n(n_1)$ has
to be provided. In the sequel, such an estimation is given in the case where $\alpha=0.05$.

We generated $10^4$
$n\times n$ symmetric matrices $\boldsymbol{X}=(\Xij)$ with $\n\in\{50,100,500,1000\}$. More precisely,
the $(\Xij)_{i\geq j}$'s are independent random variables
distributed as a zero mean standard Gaussian distribution
($\mathcal{N}(0,1)$), a Cauchy distribution with 0 and 1 location
and scale parameters ($\mathcal{C}au(0,1)$), respectively or an
Exponential distribution of parameter 2 ($\mathcal{E}xp(2)$). We shall
consider two values for $n_1$: $n_1=\lfloor 0.1 n\rfloor$
 and $n_1=\lfloor 0.5 n\rfloor$, where $\lfloor x\rfloor$ denotes the
 integer part of $x$.

The  empirical $0.95$ quantiles of
$T_n(n_1)$ are given in Table \ref{Table:Quantiles}. 
We observe from this table that the empirical $0.95$ quantiles 
do not seem to be sensitive
neither to the values of $n_1$ and $n$ nor to the distribution of the
observations since they slightly vary around 0.8.

\begin{table}[!h]
\begin{center}
\begin{tabular}{c|*{3}{c}|*{3}{c}}
&\multicolumn{3}{c|}{$\nun=\lfloor
  0.1\n\rfloor$}&\multicolumn{3}{c}{$\nun=\lfloor 0.5\n\rfloor$}\\
\cline{2-7}
&$\mathcal{N}(0,1)$&$\mathcal{C}au(0,1)$&$\mathcal{E}xp(2)$&$\mathcal{N}(0,1)$&$\mathcal{C}au(0,1)$&$\mathcal{E}xp(2)$\\
\hline
$n=50$&0.83&0.83&0.82&0.78&0.79&0.76\\\hline
$n=100$&0.81&0.8&0.82&0.78&0.8&0.78\\\hline
$n=500$&0.78&0.8&0.81&0.8&0.78&0.77\\\hline
$n=1000$&0.79&0.78&0.79&0.78&0.77&0.79\\
\end{tabular}
\caption{\label{Table:Quantiles} Estimation of the empirical $0.95$ quantiles of
$T_n(n_1)$.}
\end{center}
\end{table}


\subsubsection{Power of the test statistic}

In this section, we study the power of the two-sample homogeneity test
defined in Section \ref{sec:stat_fram_two_sample}.
We generated $10^4$
$n\times n$ symmetric matrices $\boldsymbol{X}=(\Xij)$ split into four blocks defined as follows and
$\n\in\{50,100,500,1000\}$. 
Let
$$
\mathcal{I}_1=\{(i,j): 1\leq j\leq i \leq n_1\},\;\;
\mathcal{I}_2=\{(i,j): 1\leq j\leq n_1,\; n_1+1\leq i \leq n\},
$$
and
$$  
\mathcal{I}_3=\{(i,j): n_1+1\leq j\leq i \leq n\}.
$$
In the sequel, we assume that $(\Xij)_{(i,j)\in\mathcal{I}_1}\stackrel{iid}{\sim}\mathcal{L}_1$, $(\Xij)_{(i,j)\in\mathcal{I}_2}\stackrel{iid}{\sim}\mathcal{L}_2$ 
and $(\Xij)_{(i,j)\in\mathcal{I}_3}\stackrel{iid}{\sim}\mathcal{L}_3$
and we take the following values for $n_1$: $n_1=\lfloor 0.1 n\rfloor$
 and $n_1=\lfloor 0.5 n\rfloor$.

Figure \ref{fig:power} displays the power curves of the two-sample
homogeneity test defined in Section \ref{sec:stat_fram_two_sample} in
the case where $\mathcal{L}_1=\mathcal{L}_3=\mathcal{N}(0,1)$ and 
$\mathcal{L}_2=\mathcal{N}(\mu,1)$ where $\mu$ belongs to the set 
$\{0,0.01,0.02,\dots,0.99,1\}$.

We can see from this figure that for large values of $n$ our testing procedure appears to be powerful whatever the value of $\mu$.
For small values of $n$, we observe that our testing procedure is all the more powerful that $\mu$ is large.

\begin{figure}[!h]
\begin{center}
\begin{tabular}{cc}
\includegraphics*[height=7cm,width=8cm]{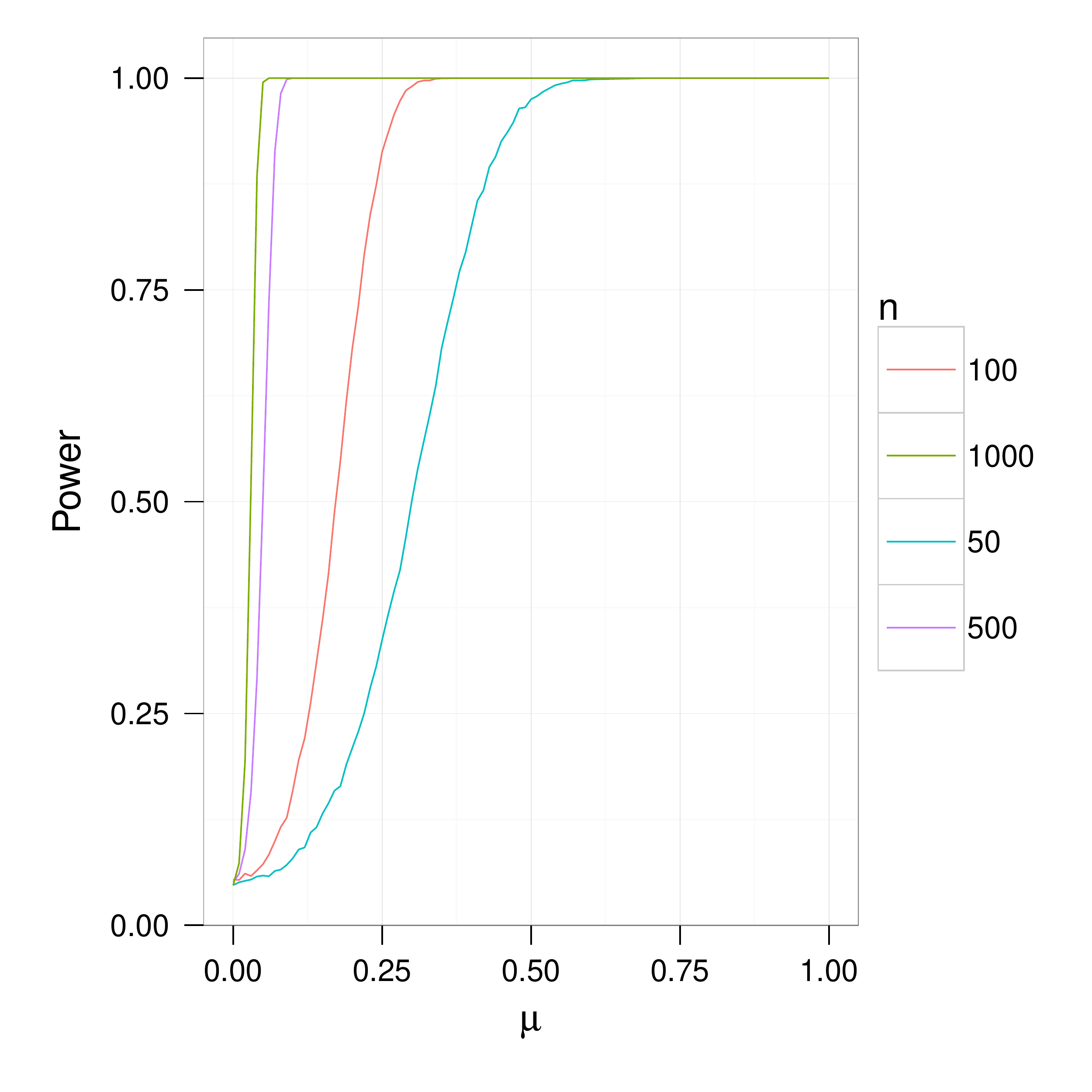}&
\hspace{-9mm}
\includegraphics*[height=7cm,width=8cm]{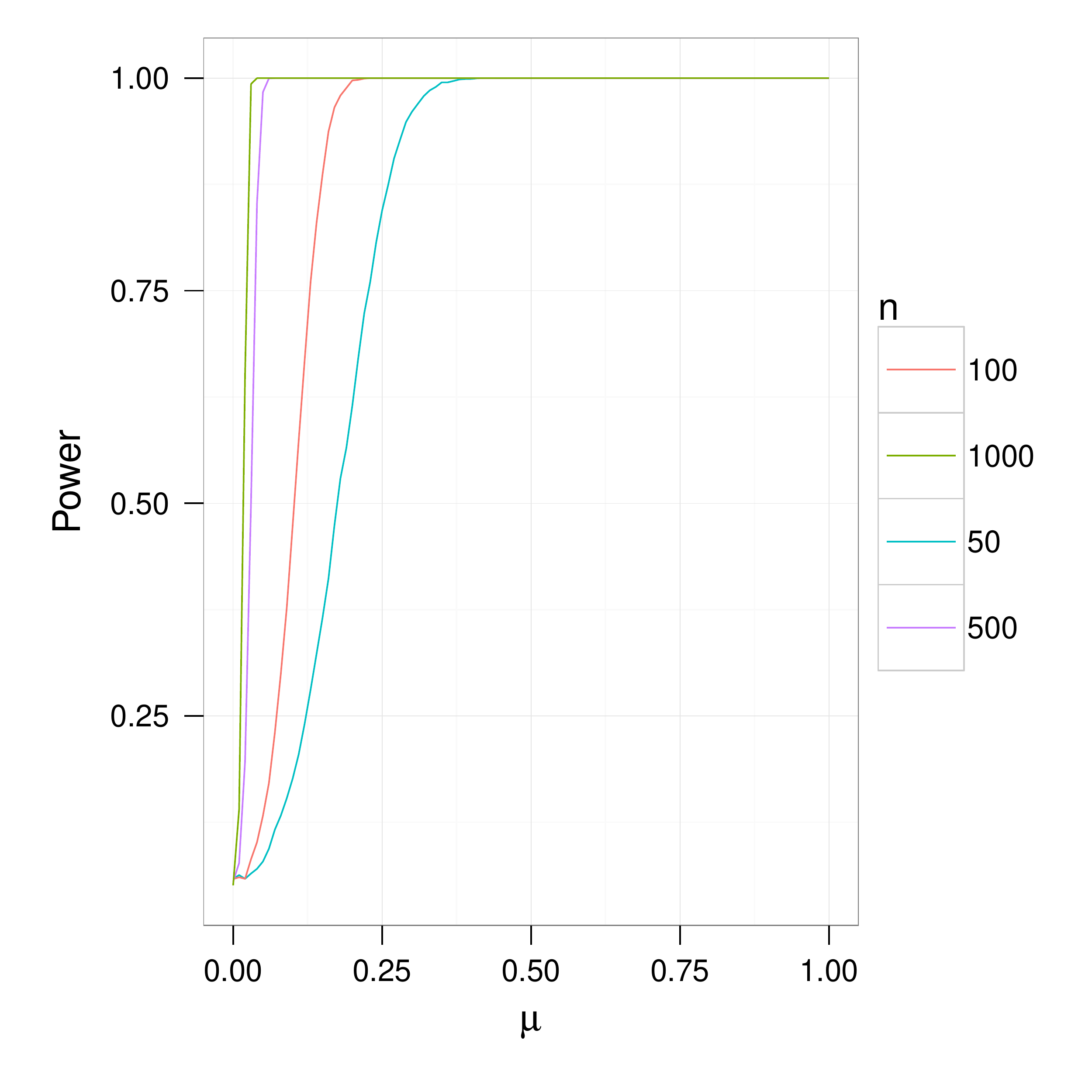}
\end{tabular}
\caption{Power curves for the two-sample homogeneity test as a function
  of $\mu$ for different values of $n$,  $n_1=\lfloor 0.1 n\rfloor$
  (left) and $n_1=\lfloor 0.5 n\rfloor$
  (right).\label{fig:power}}
\end{center}
\end{figure}

\subsection{Statistical performance of the multiple change-point
  estimation procedure}

In this section, we study the statistical performance of the multiple change-point
  estimation procedure described in Section
  \ref{sec:change-point}. This method is implemented in the R package
  \textsf{MuChPoint}, which will be available on the Comprehensive R
  Archive Network (CRAN).

We generated 10000 $n\times n$ symmetric matrices $\textbf{X}=(X_{i,j})$ where
  $\n\in\{50,100,200,300,400\}$ with different block configurations
  and $L=10$ block boundaries (change-points).

We shall first consider the \textsf{Block Diagonal} configuration. In this case, the matrix consists
of diagonal blocks of size $n/10$. Within each of these diagonal blocks, the
$\Xij$'s such that $i\geq j$ are independent and have the distribution $\mathcal{L}_1$. The $\Xij$'s lying in the extra-diagonal
part of the lower triangular part of $\textbf{X}$ are independent and
have the distribution $\mathcal{L}_2$, which is assumed to be
different from $\mathcal{L}_1$. The upper triangular part of
$\textbf{X}$ is then derived by symmetry.

We shall also consider the \textsf{Chessboard} configuration. In this
case, the matrix consists of non overlapping blocks of size $n/10$.
The $\Xij$'s belonging to two blocks
sharing a boundary have different distributions. 
This configuration implies that only two distributions $\mathcal{L}_1$
and $\mathcal{L}_2$ are at stake.
The distribution of the upper left block is denoted by $\mathcal{L}_1$
in the sequel.

For these two configurations, we shall consider for $\mathcal{L}_1$ a
$\mathcal{N}(1,\sigma^2)$, a $\mathcal{E}xp(2)$ or a $\mathcal{C}au(1,a)$
distribution where $\sigma$ and $a$ are in $\{1,2,5\}$. The $\mathcal{L}_2$ distributions associated with each of
them are $\mathcal{N}(0,\sigma^2)$, $\mathcal{E}xp(\lambda)$ and
$\mathcal{C}au(0,a)$ where $\lambda\in\{1,0.5,4\}$.
We display in Figure \ref{fig:examples} some examples of the \textsf{Block Diagonal} and
\textsf{Chessboard} configurations for the Gaussian, Exponential and
Cauchy distributions. In these plots, large values are
displayed in red and small values in blue.

\begin{figure}[!h]
\begin{center}
\begin{tabular}{ccc}
\includegraphics*[height=4cm,width=4.5cm]{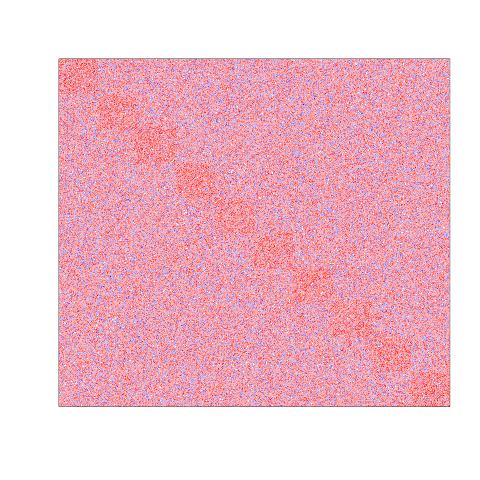}
&\hspace{-48mm}\includegraphics*[height=4cm,width=4.5cm]{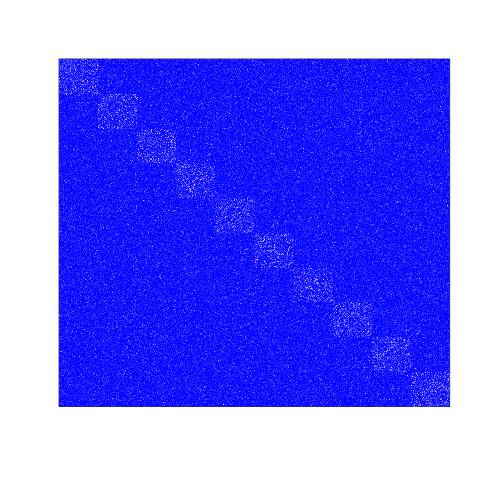}
&\hspace{-50mm}\includegraphics*[height=4cm,width=4.5cm]{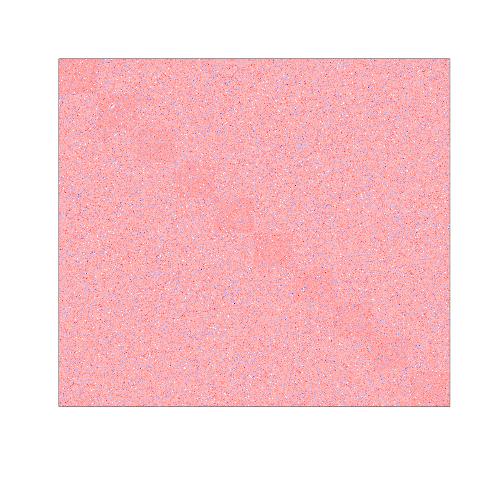}\\
\includegraphics*[height=4cm,width=4.5cm]{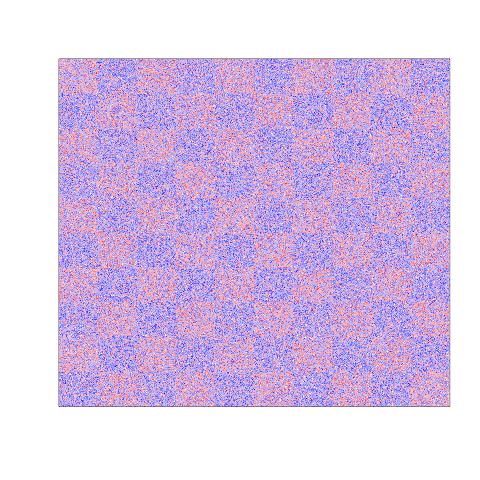}
&\includegraphics*[height=4cm,width=4.5cm]{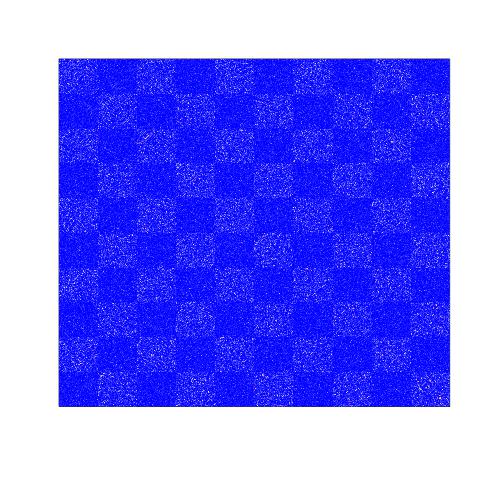}
\includegraphics*[height=4cm,width=4.5cm]{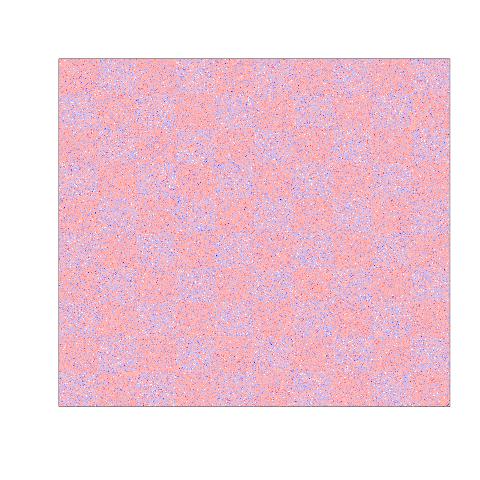}
\end{tabular}
\caption{Examples of $400\times 400$ matrices $\textbf{X}$. Top: \textsf{Block
    Diagonal} configuration. Bottom: \textsf{Chessboard}
  configuration. Left: $\mathcal{L}_1=\mathcal{N}(1,4)$,
  $\mathcal{L}_2=\mathcal{N}(0,4)$, middle:
  $\mathcal{L}_1=\mathcal{E}xp(2)$, $\mathcal{L}_2=\mathcal{E}xp(1)$ 
and right: $\mathcal{L}_1=\mathcal{C}au(1,1)$,
$\mathcal{L}_2=\mathcal{C}au(0,1)$. \label{fig:examples}}
\end{center}
\end{figure}

In the Gaussian \textsf{Chessboard} configuration, Figure \ref{fig:barplots} displays the frequency of the number of
times where each position in $\{1,\dots,n-1\}$ has been estimated as a
change-point. We can see from this figure that the true change-point
positions are in general properly retrieved by our approach even in
cases where the change-points are not easy to detect with the naked eye. However, we observe that
in the cases where $\sigma$ increases, some spurious
change-points appear close to the true change-point positions.

\begin{figure}[!h]
\begin{center}
\begin{tabular}{ccc}
$\sigma=1$ & $\sigma=2$ & $\sigma=5$\\
\includegraphics*[height=4cm,width=4.5cm]{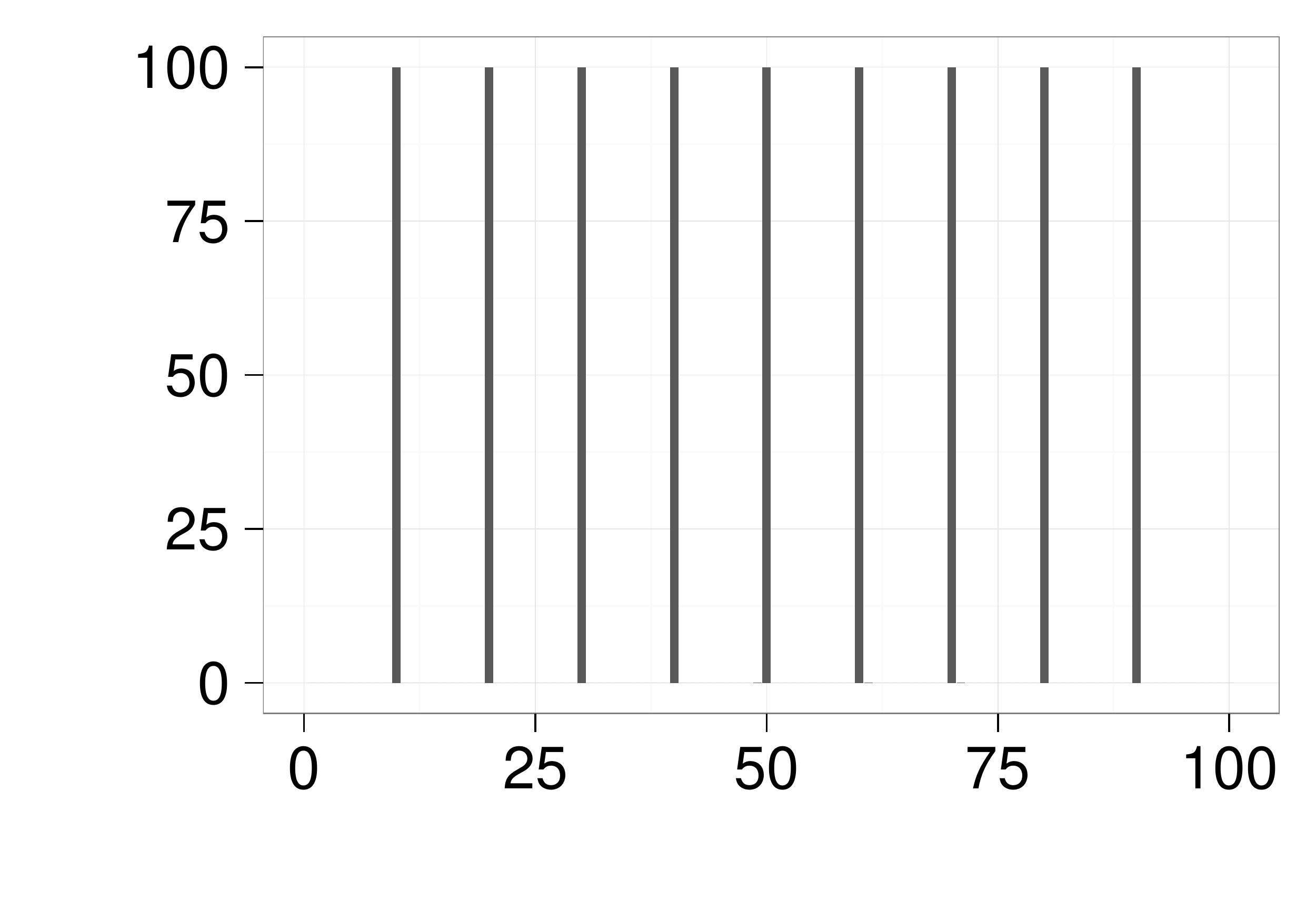}&
\hspace{-5mm}
\includegraphics*[height=4cm,width=4.5cm]{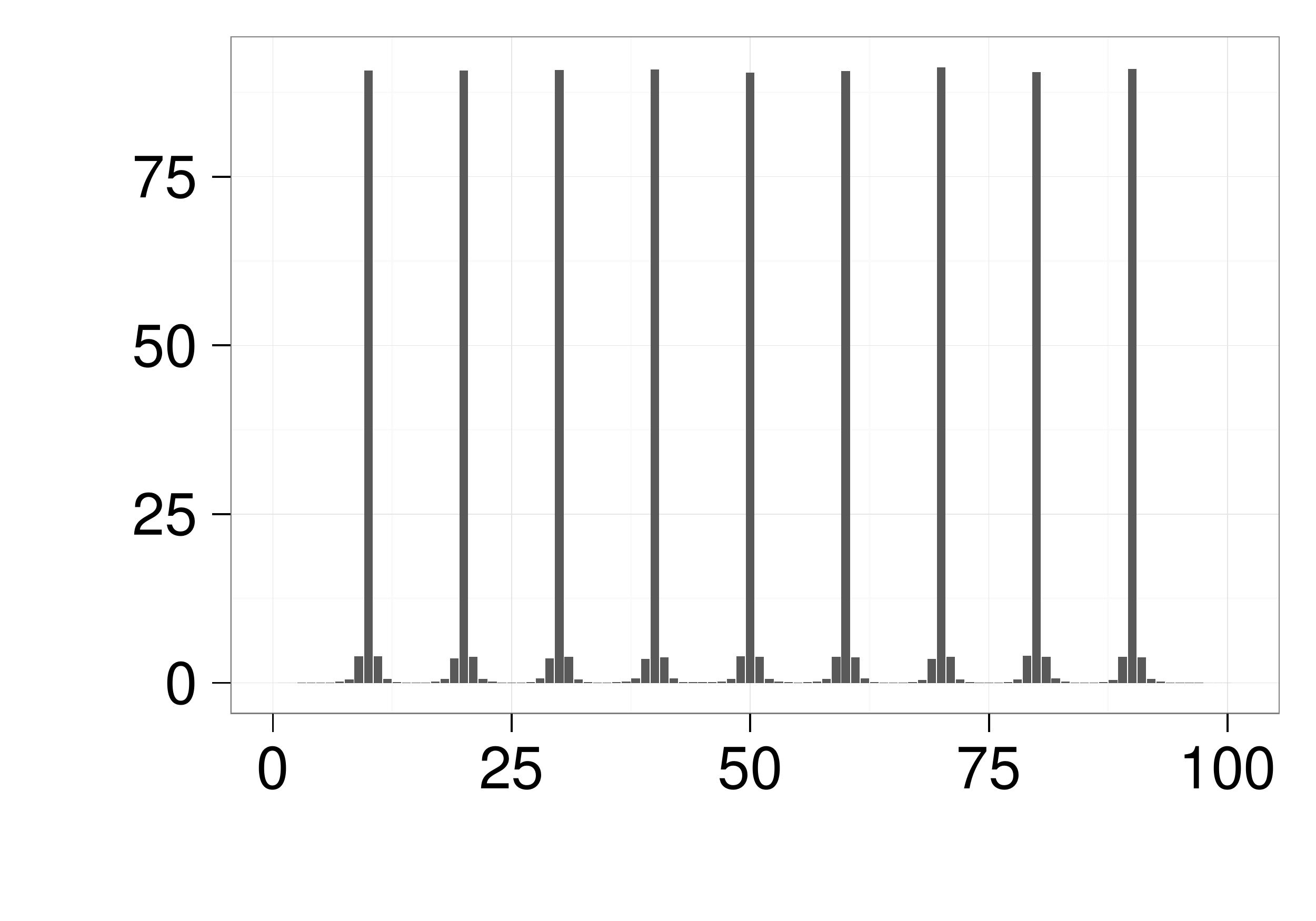}&
\hspace{-5mm}
\includegraphics*[height=4cm,width=4.5cm]{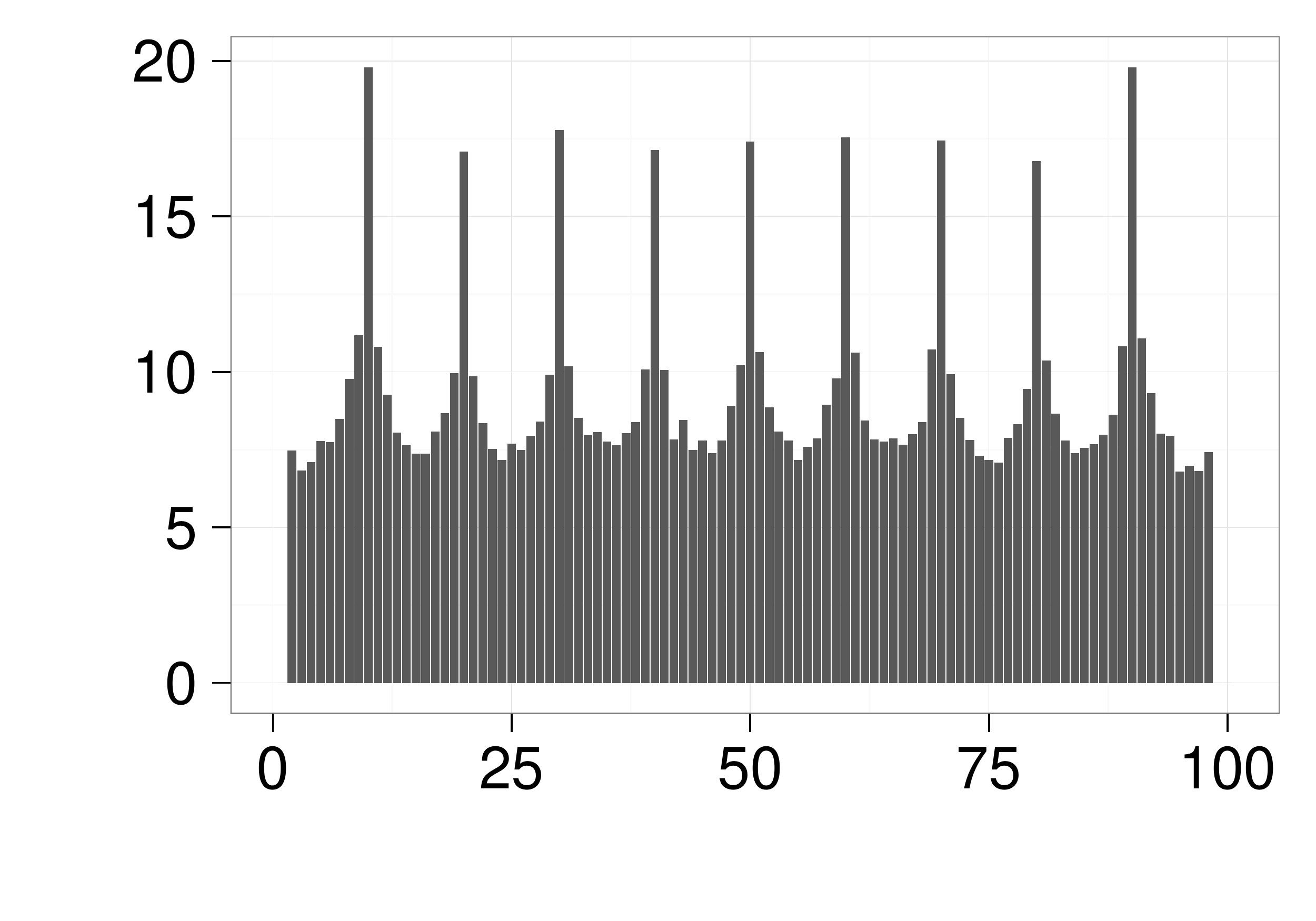}\\
\includegraphics*[height=4cm,width=4.5cm]{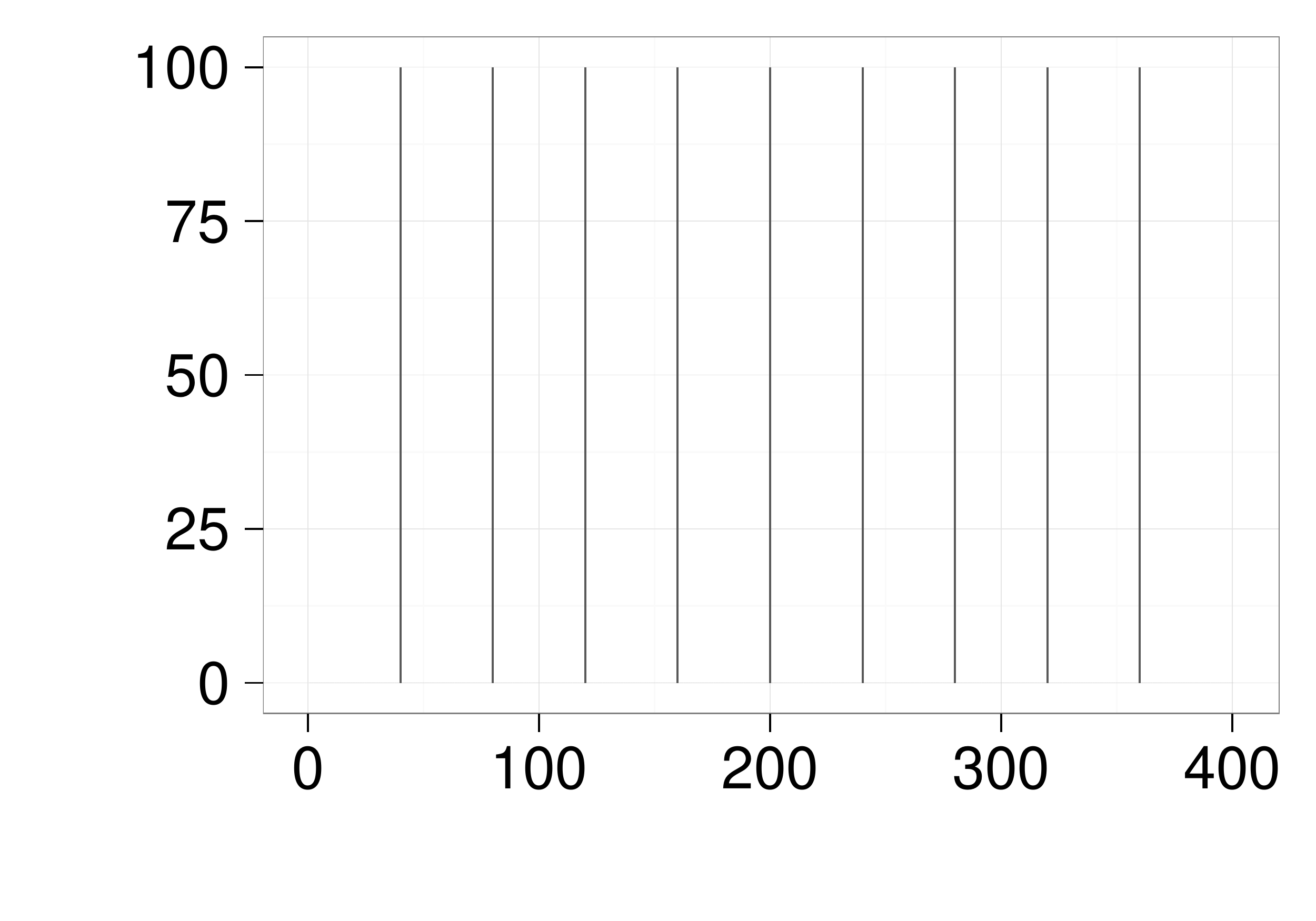}&
\hspace{-5mm}
\includegraphics*[height=4cm,width=4.5cm]{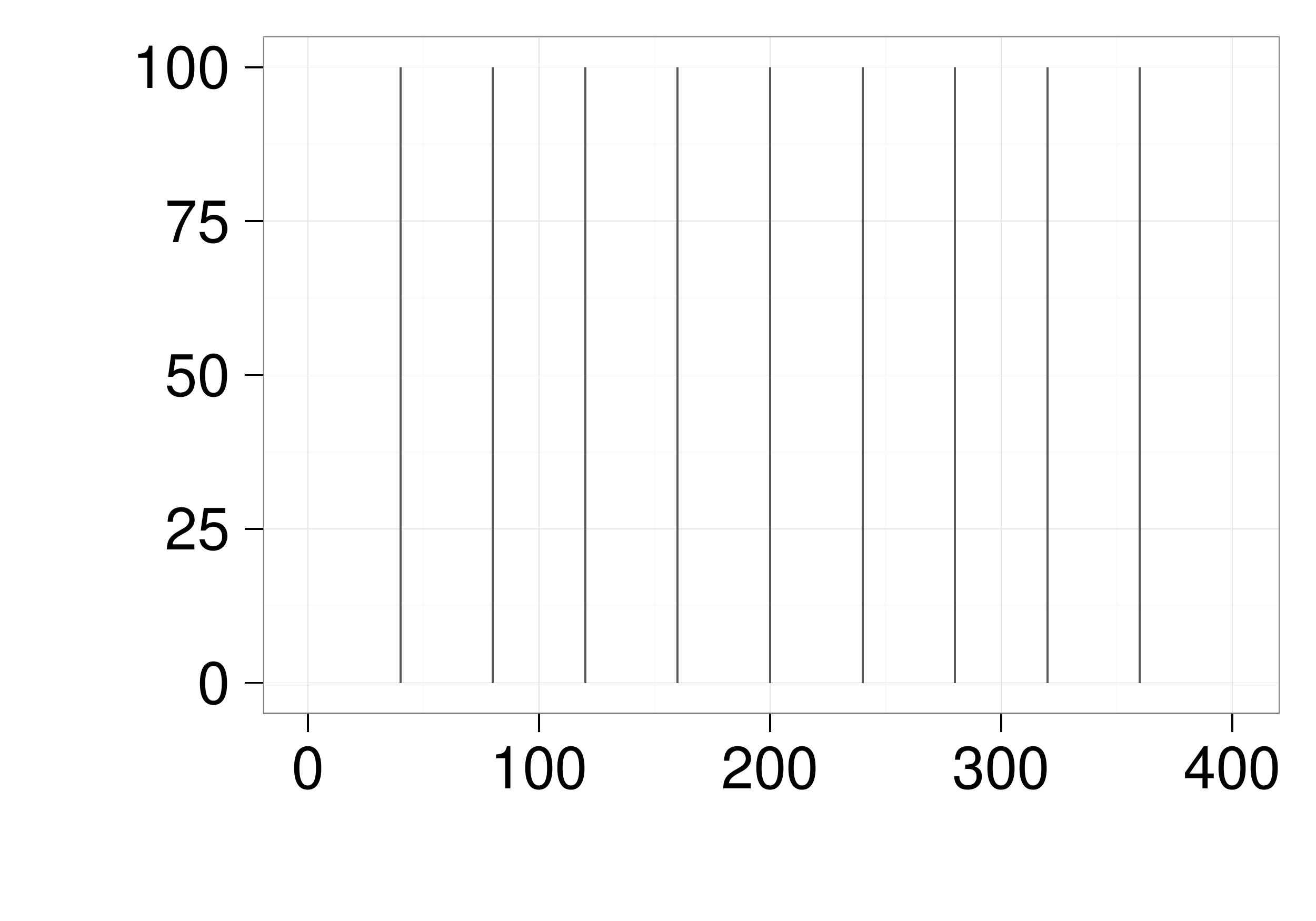}&
\hspace{-5mm}
\includegraphics*[height=4cm,width=4.5cm]{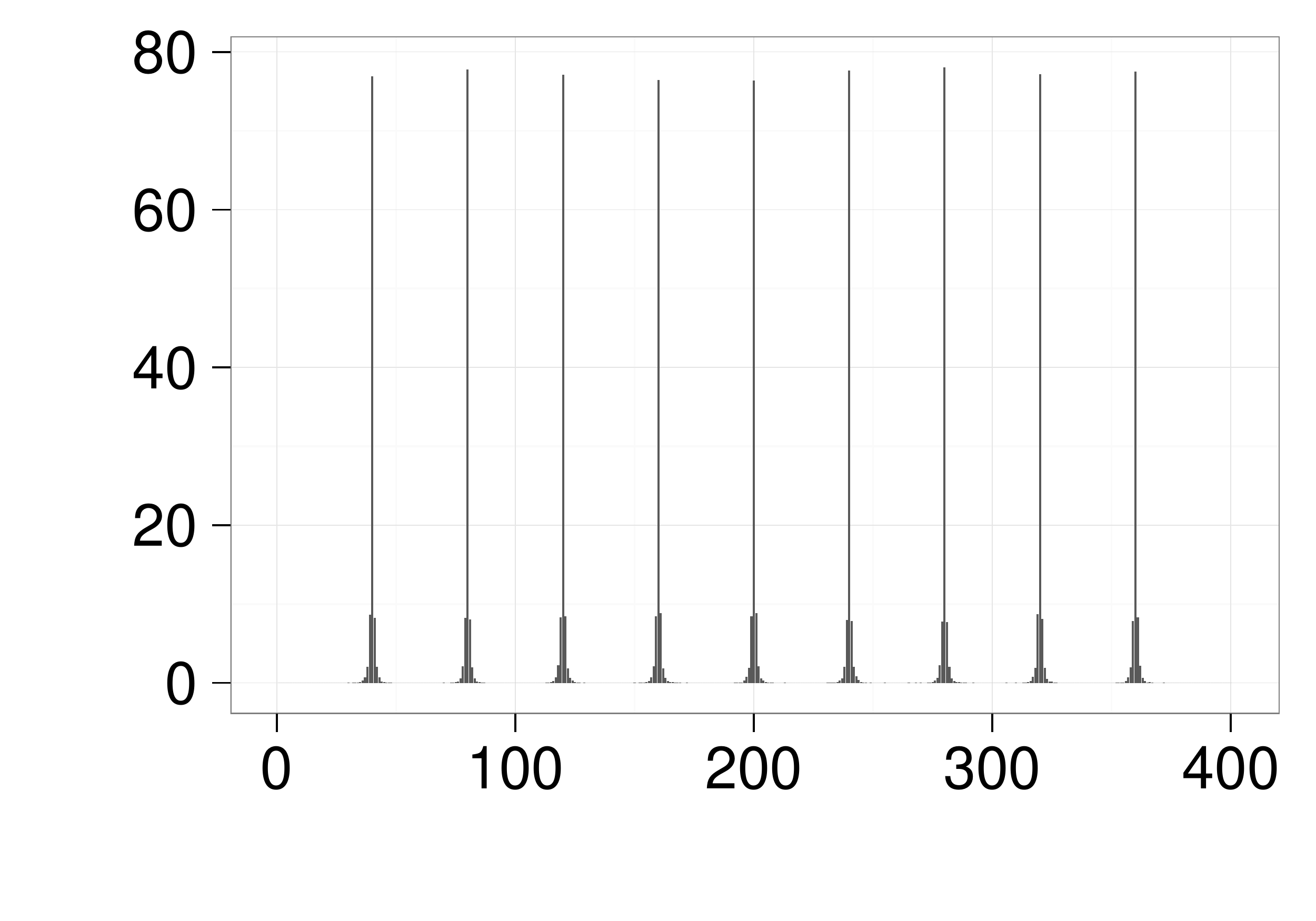}\\
\end{tabular}
\caption{Barplots associated with the multiple change-point estimation
  procedure for $n=100$ (top), $n=400$ (bottom),
  $\mathcal{L}_1=\mathcal{N}(1,\sigma^2)$ and
$\mathcal{L}_2=\mathcal{N}(0,\sigma^2)$ for different
  values of $\sigma$. The true positions of the change-points are
  located at the multiples of $n/10$.  \label{fig:barplots}}
\end{center}
\end{figure}

We also compared our multiple change-point estimation strategy
(\textsf{MuChPoint}) to the
one devised by \cite{matteson:2014} (\textsf{ecp}), which is, to the best of our
knowledge, the most recent approach proposed for solving this issue.
The results are gathered in Figures \ref{fig:comparaisons} and
\ref{fig:comparaisons_diag} which display the boxplots of the
distance $D$, defined in \eqref{eq:D}, between the change-points provided by these procedures in the \textsf{Block Diagonal} and
\textsf{Chessboard} configurations for the Gaussian, Exponential and
Cauchy distributions. These boxplots
are obtained from 100 replications of $n\times n$ symmetric matrices
where $n\in\{50,100,200,300,400\}$. 
More precisely, the distance $D$ is defined as follows
\begin{equation}\label{eq:D}
D(\widehat{\textbf{t}},\textbf{t}^\star)=\frac1n\sqrt{\sum_{k=1}^{K^\star}(\widehat{t}_k-t_k^\star)^2},
\end{equation}
where $\textbf{t}^\star=(t_1^\star,\dots,t_{K^\star}^\star)$ denotes the vector of
the true $K^\star$ change-point positions and
$\widehat{\textbf{t}}=(\widehat{t}_1,\dots,\widehat{t}_{K^\star})$
its estimation either obtained by \textsf{MuChPoint} or \textsf{ecp}.
Note that, it actually corresponds to the usual
$\ell_2$-norm of the vector
$\boldsymbol{\tau}^\star-\widehat{\boldsymbol{\tau}}$ where
$\boldsymbol{\tau}^\star=(\tau_1^\star,\dots,\tau_{K^\star}^\star)$,
$\widehat{\boldsymbol{\tau}}=(\widehat{\tau}_1,\dots,\widehat{\tau}_{K^\star})$
with $t_k^\star=\lfloor n\tau_k^\star\rfloor$ and
$\widehat{t}_k=\lfloor n\widehat{\tau}_k\rfloor$.
In order to benchmark these methodologies, we provide to both of them
the true value $K^\star$ of the number of change-points, which is here
equal to 10.

\begin{figure}[!h]
\begin{center}
\begin{tabular}{ccc}
$\sigma=1$&\hspace{-48mm}$a=1$&\hspace{-50mm}$\lambda=1/2$\\
\includegraphics*[height=4cm,width=4.5cm]{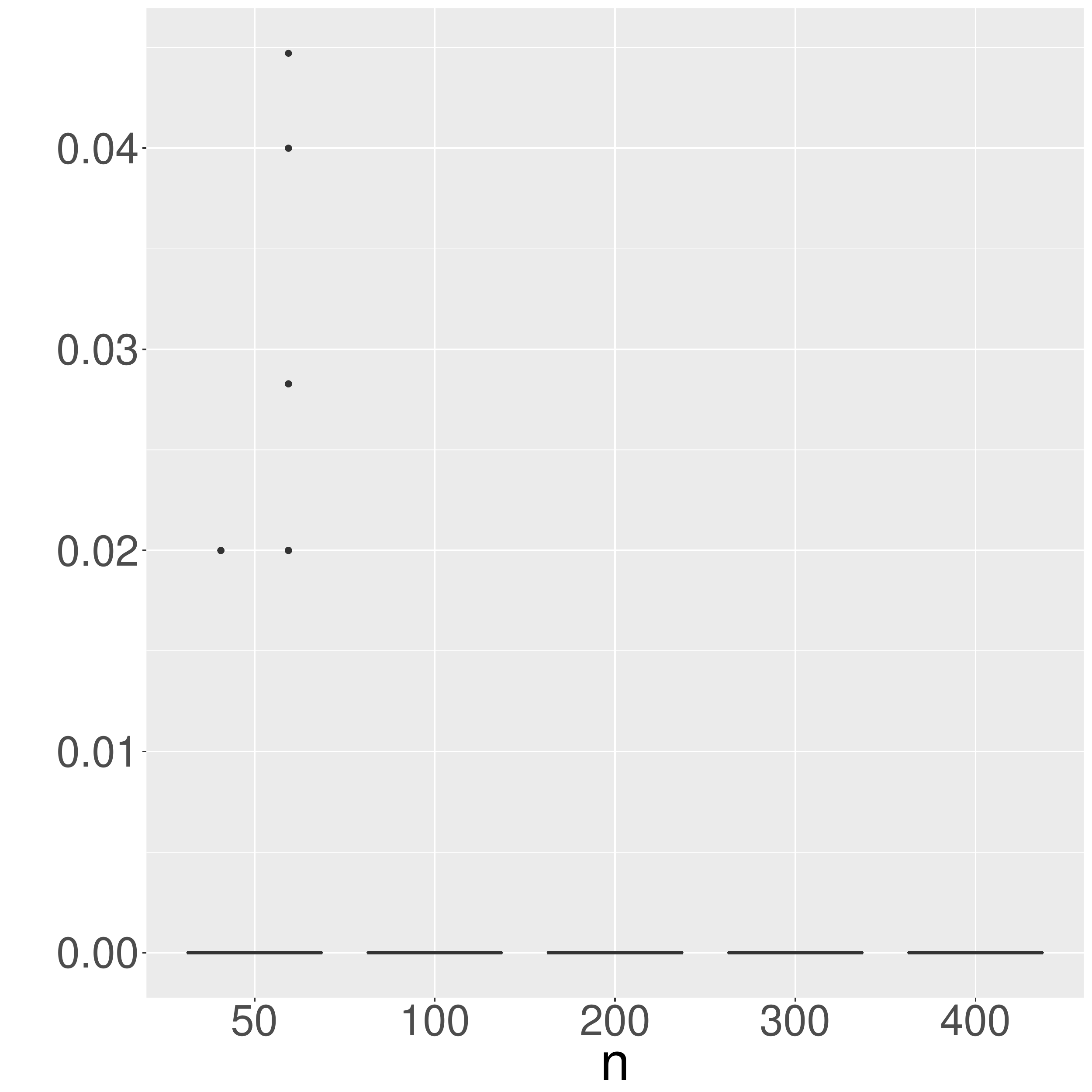}
&\hspace{-48mm}\includegraphics*[height=4cm,width=4.5cm]{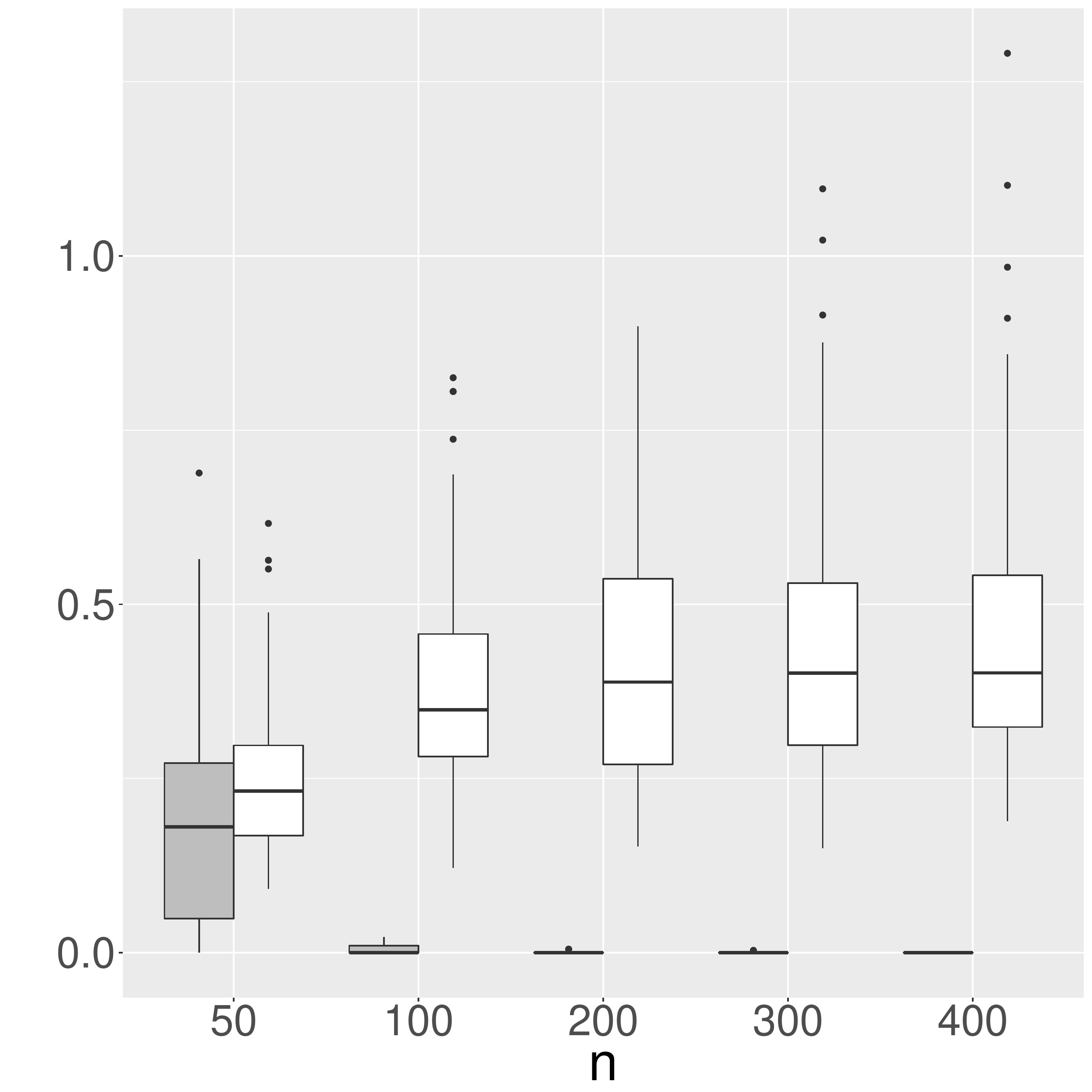}
&\hspace{-50mm}\includegraphics*[height=4cm,width=4.5cm]{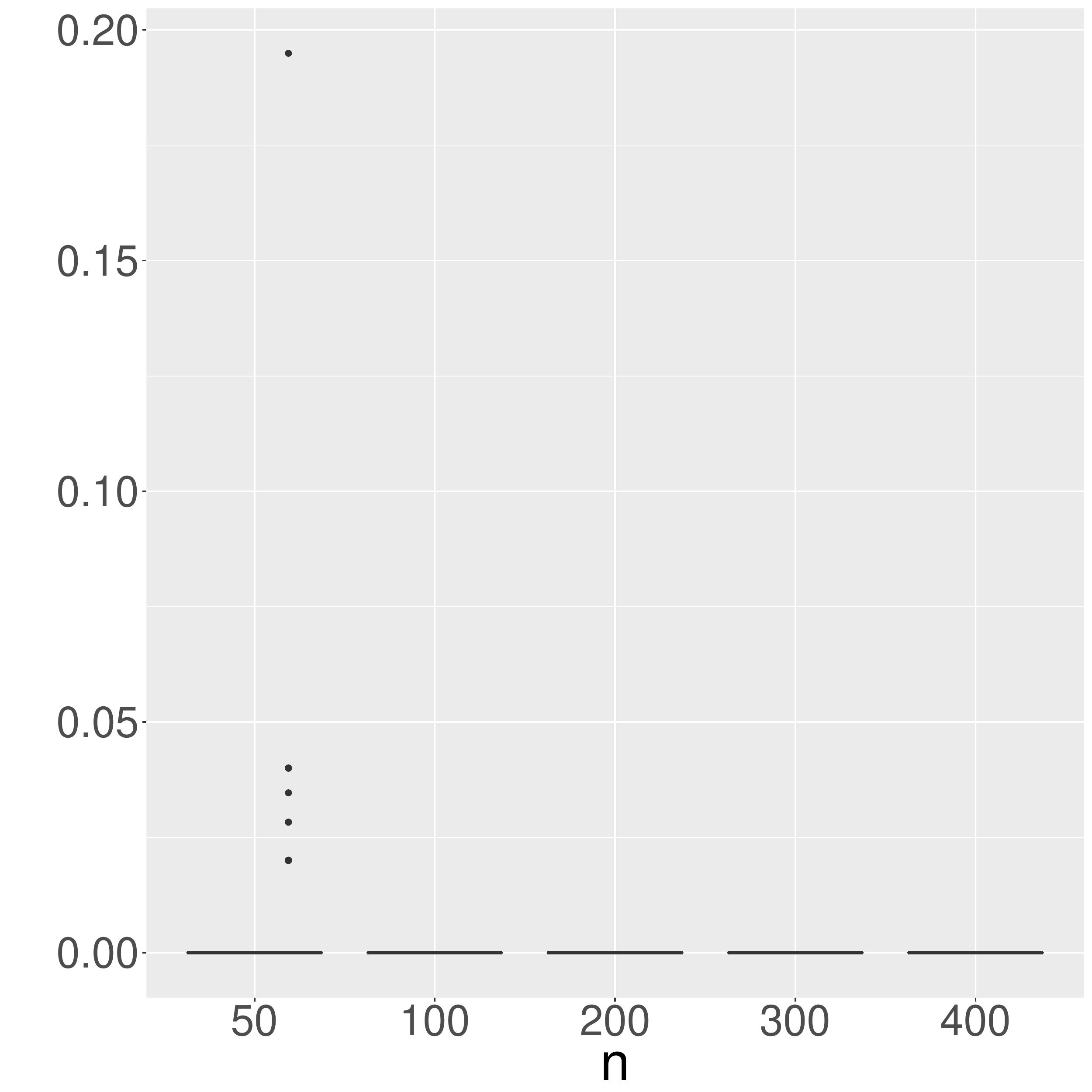}\\
$\sigma=2$&\hspace{-48mm}$a=2$&\hspace{-50mm}$\lambda=1$\\
\includegraphics*[height=4cm,width=4.5cm]{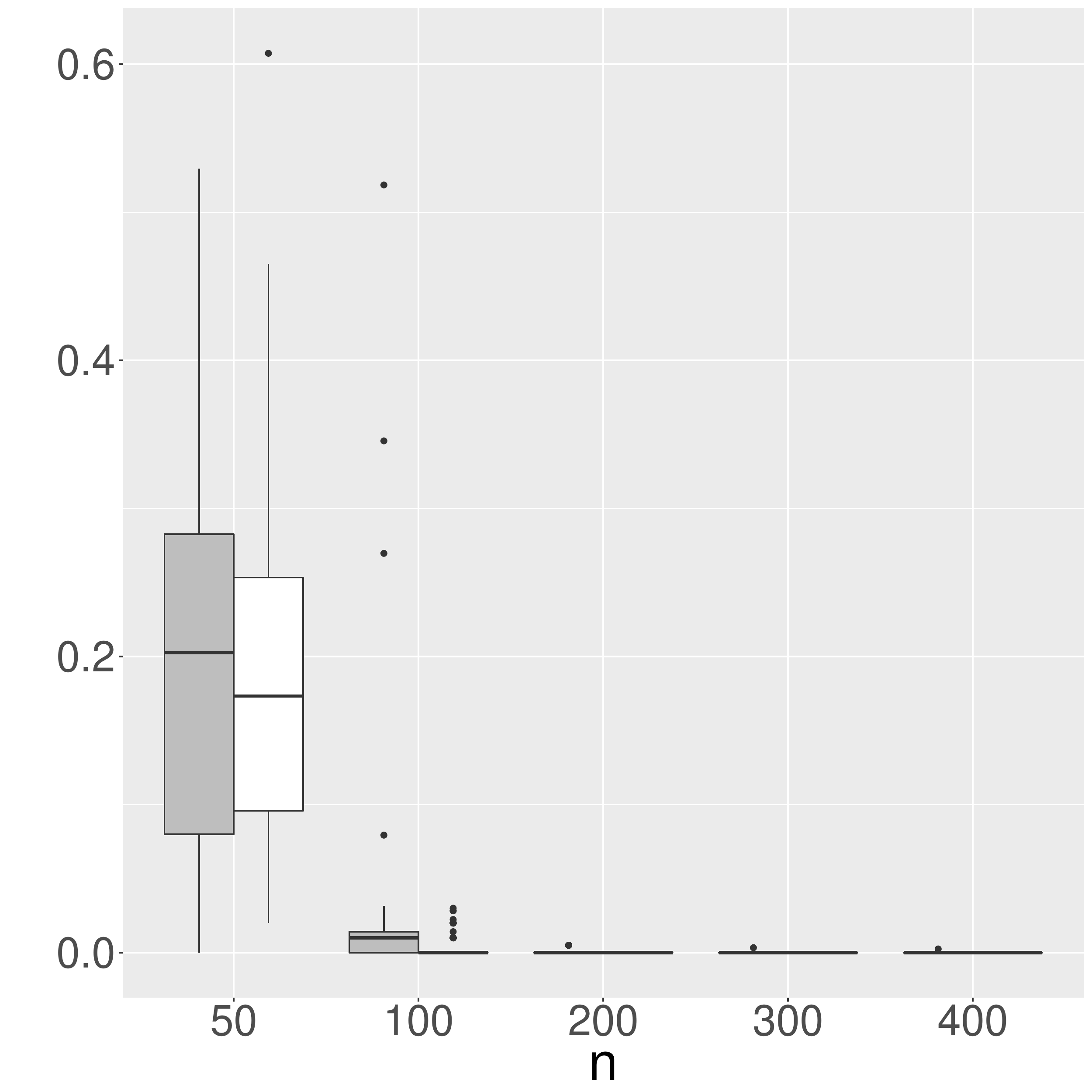}
&\includegraphics*[height=4cm,width=4.5cm]{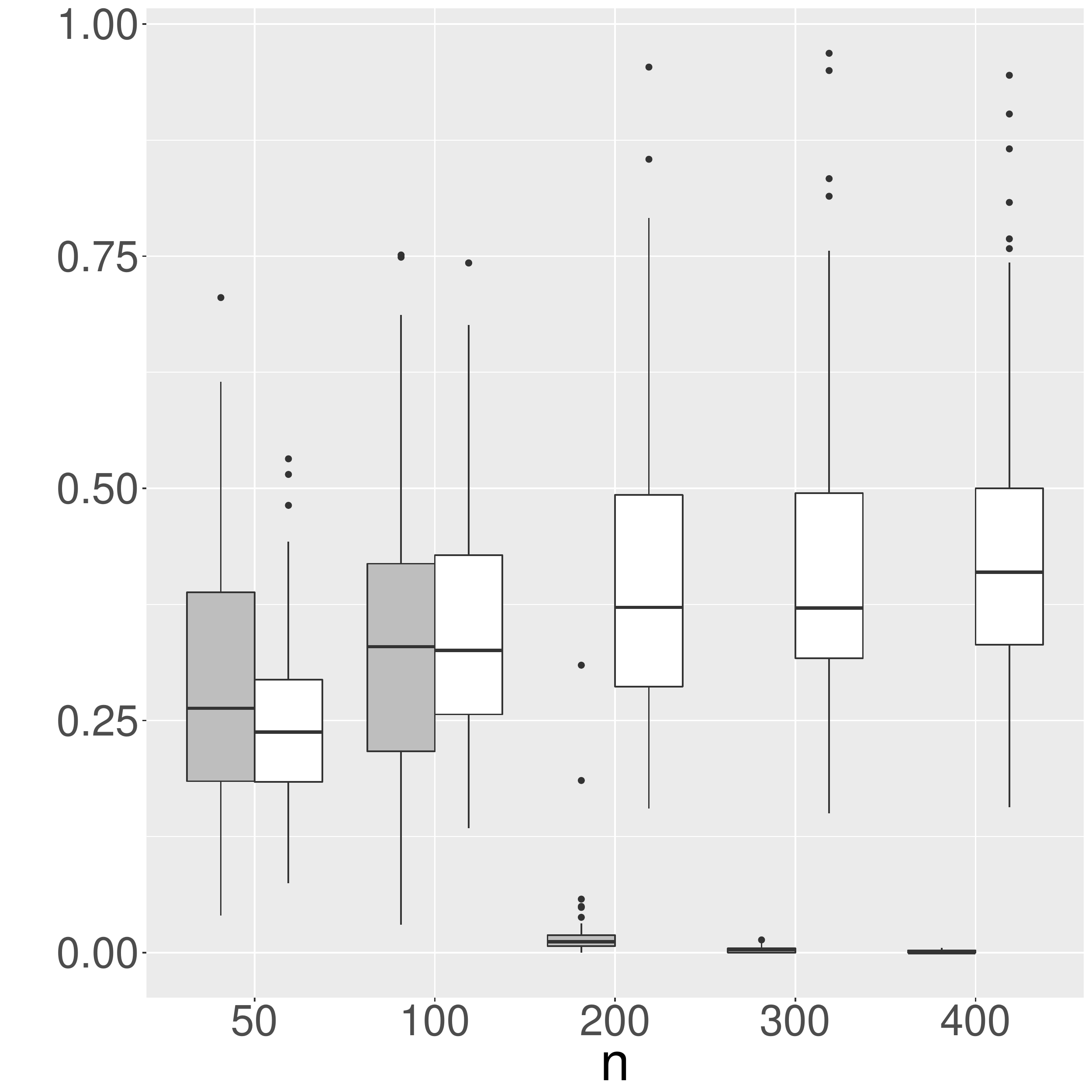}
\includegraphics*[height=4cm,width=4.5cm]{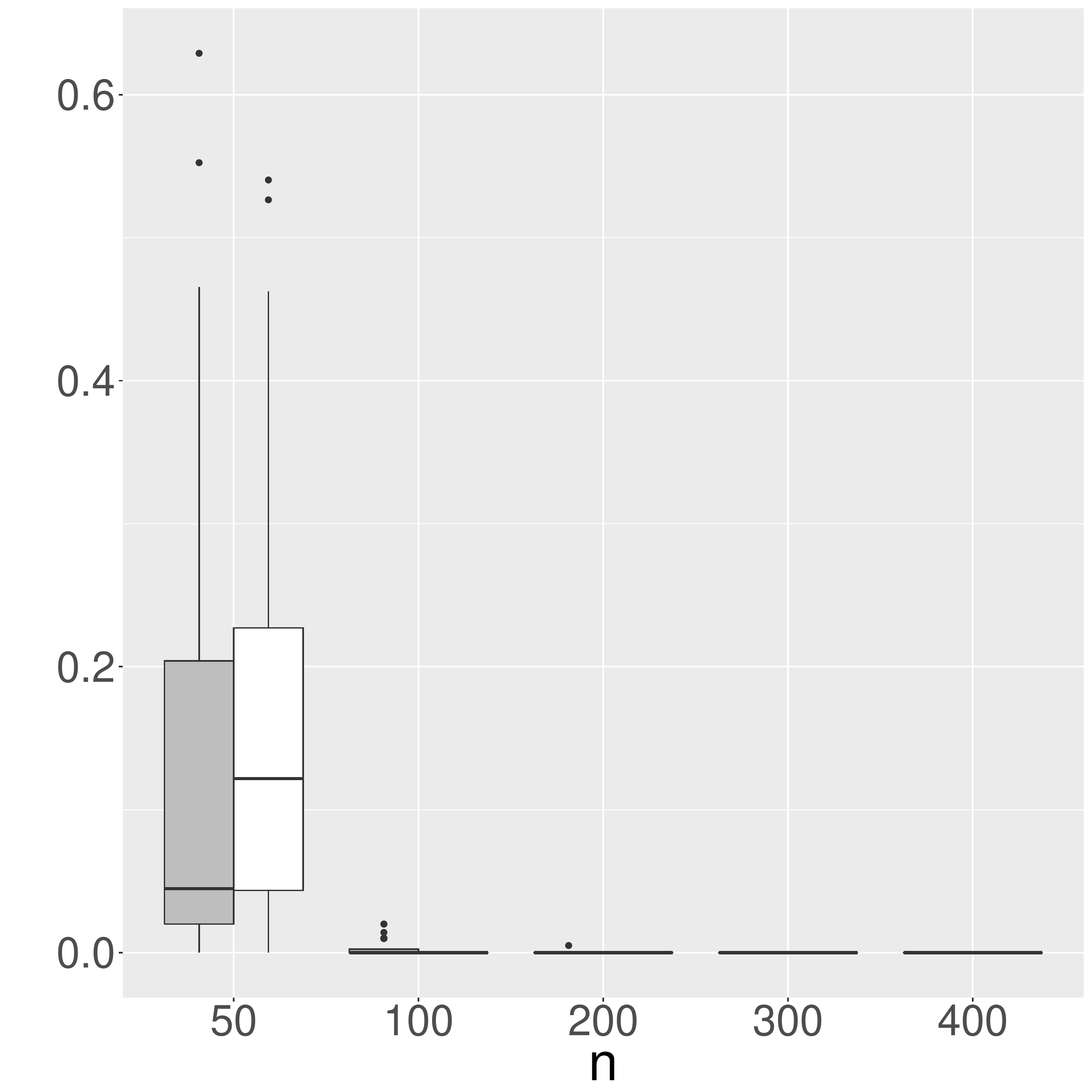}\\
$\sigma=5$&\hspace{-48mm}$a=5$&\hspace{-50mm}$\lambda=4$\\
\includegraphics*[height=4cm,width=4.5cm]{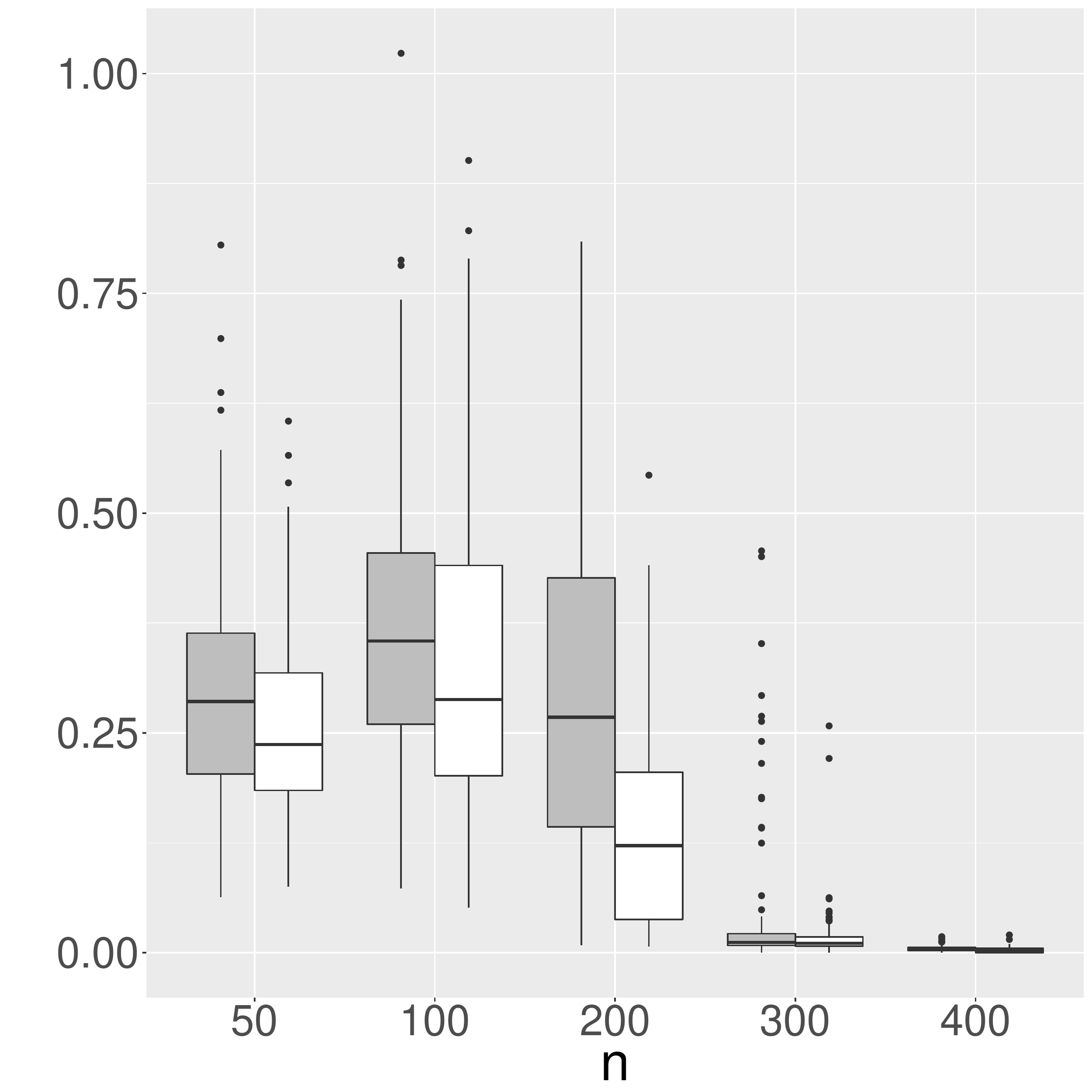}
&\includegraphics*[height=4cm,width=4.5cm]{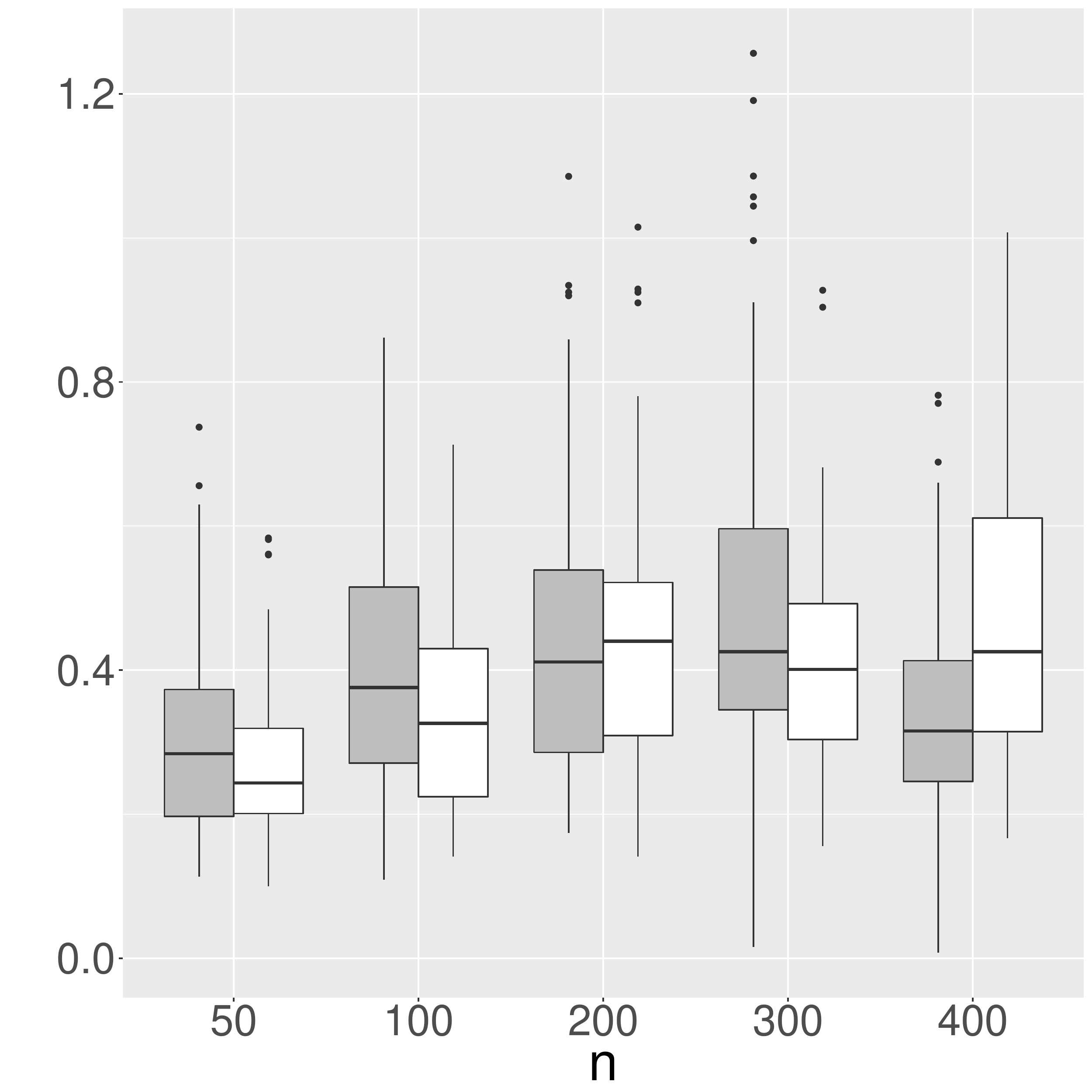}
\includegraphics*[height=4cm,width=4.5cm]{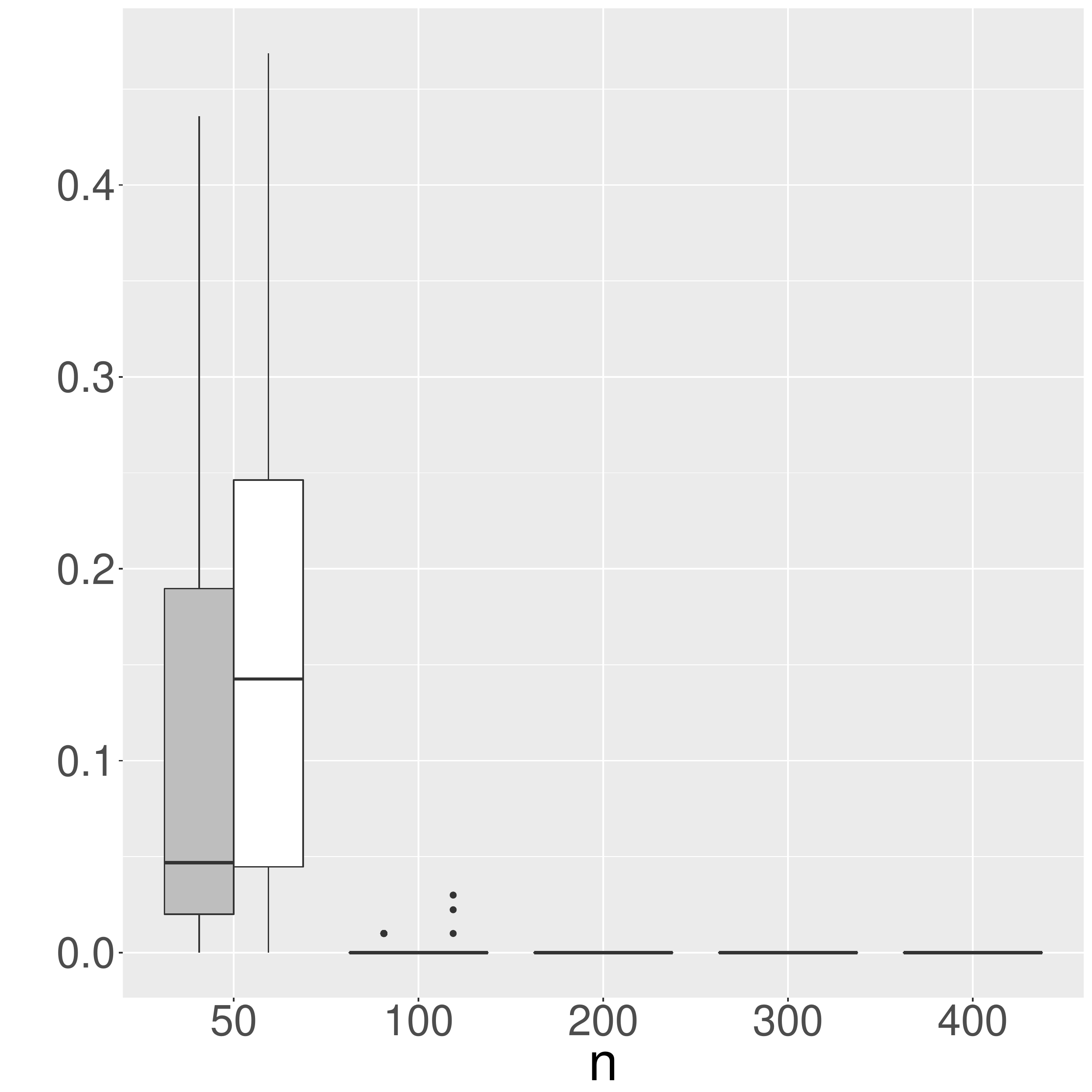}\\
\end{tabular}
\caption{Boxplots of the distances $D$ for \textsf{MuChPoint} and
  \textsf{ecp} in the \textsf{Chessboard} configuration.  Left: $\mathcal{L}_1=\mathcal{N}(1,\sigma^2)$,
  $\mathcal{L}_2=\mathcal{N}(0,\sigma^2)$, middle: $\mathcal{L}_1=\mathcal{C}au(1,a)$,
$\mathcal{L}_2=\mathcal{C}au(0,a)$ and right:
  $\mathcal{L}_1=\mathcal{E}xp(2)$,
  $\mathcal{L}_2=\mathcal{E}xp(\lambda)$ for different values of
  $\sigma$, $\lambda$ and $a$. The boxplots associated with
  \textsf{MuChPoint} are displayed in gray and the ones of
  \textsf{ecp} in white.\label{fig:comparaisons}}
\end{center}
\end{figure}

\begin{figure}[!h]
\begin{center}
\begin{tabular}{ccc}
$\sigma=1$&\hspace{-48mm}$a=1$&\hspace{-50mm}$\lambda=1/2$\\
\includegraphics*[height=4cm,width=4.5cm]{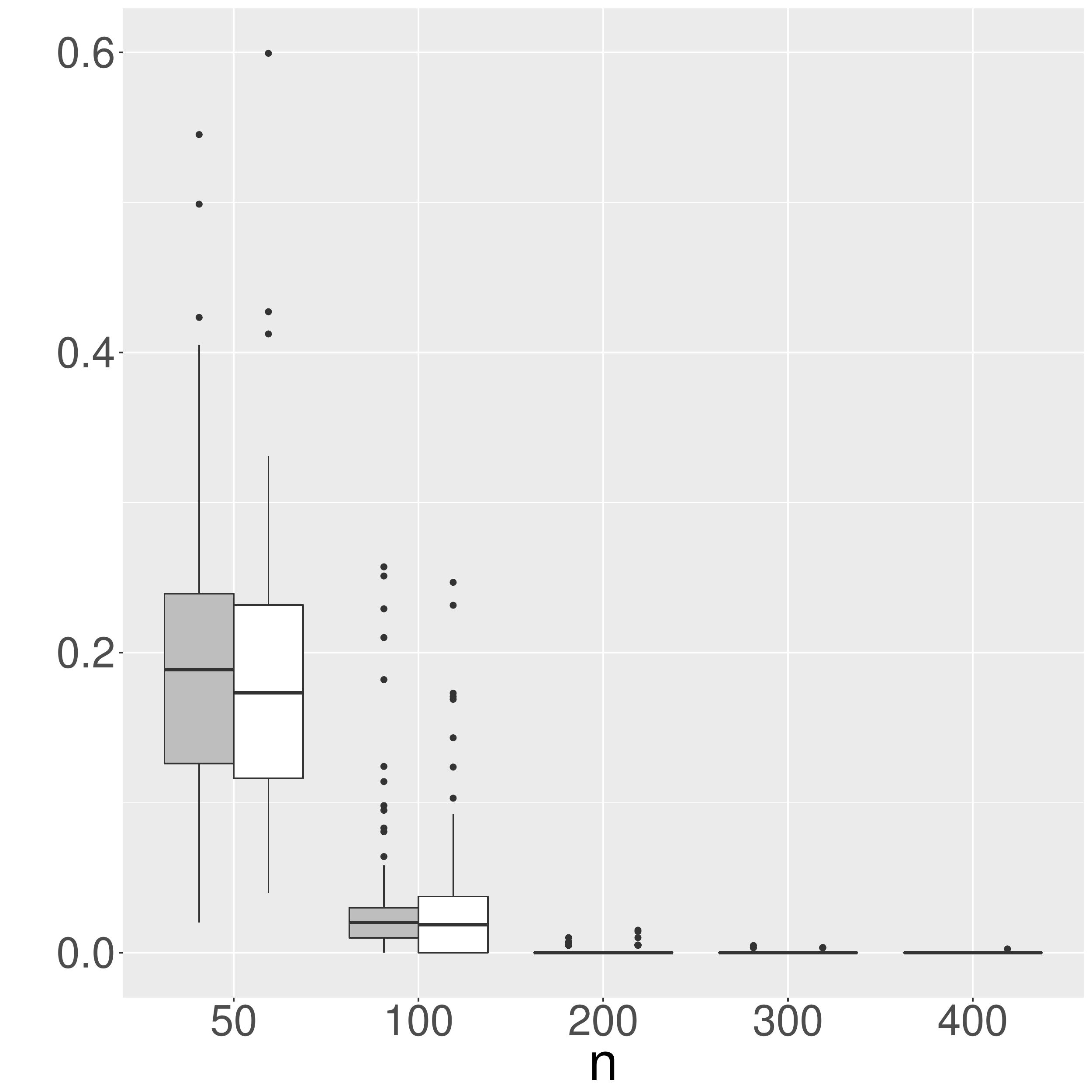}
&\hspace{-48mm}\includegraphics*[height=4cm,width=4.5cm]{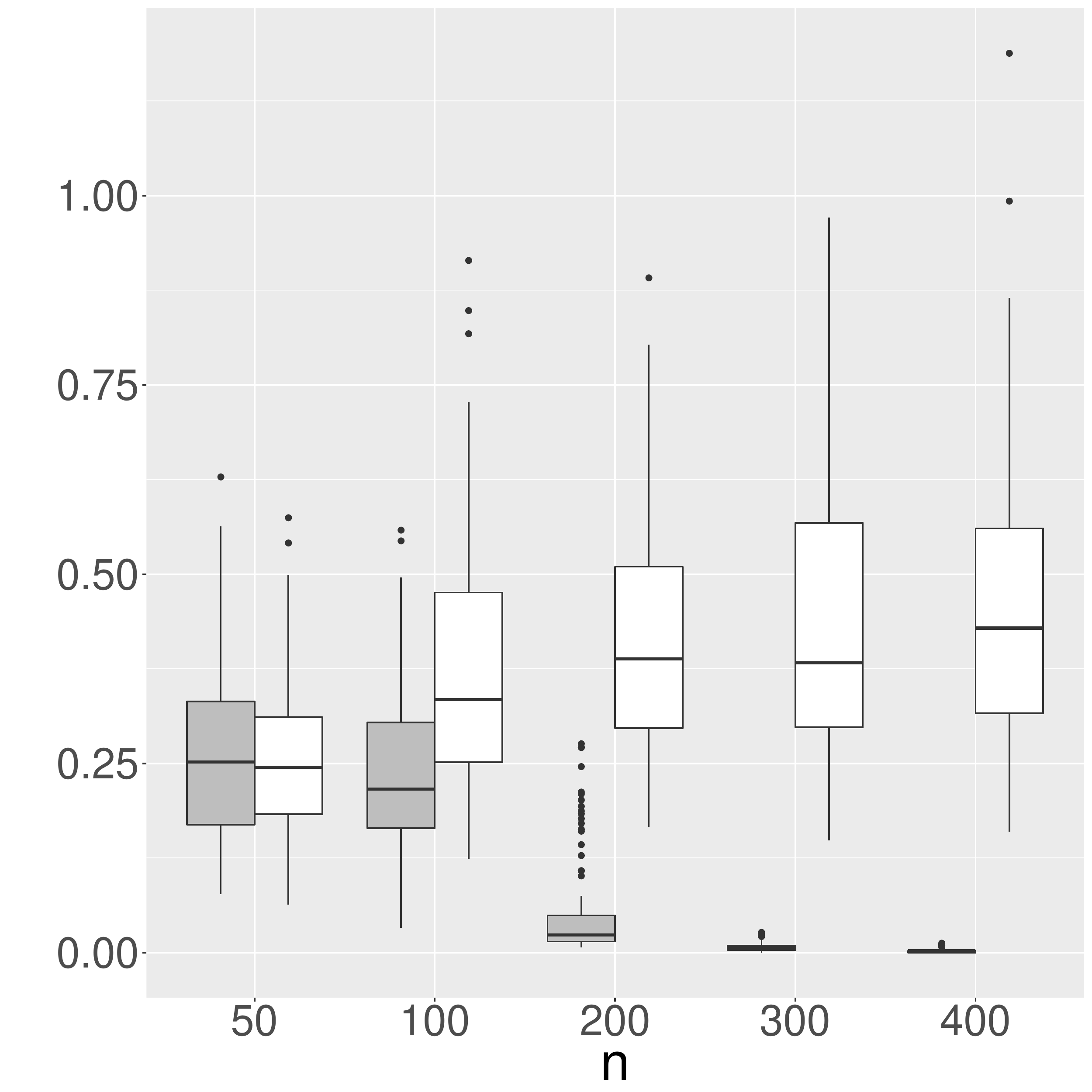}
&\hspace{-50mm}\includegraphics*[height=4cm,width=4.5cm]{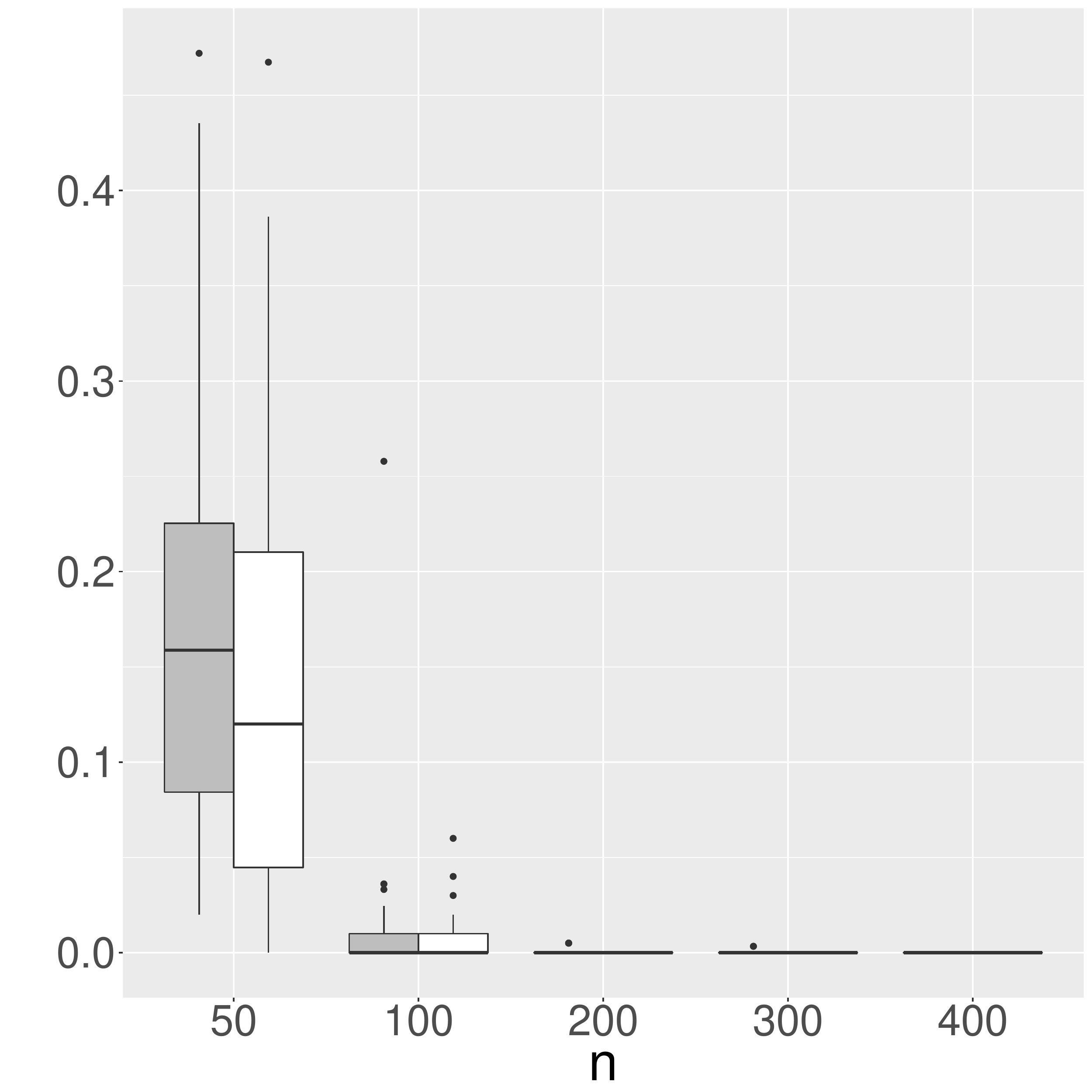}\\
$\sigma=2$&\hspace{-48mm}$a=2$&\hspace{-50mm}$\lambda=1$\\
\includegraphics*[height=4cm,width=4.5cm]{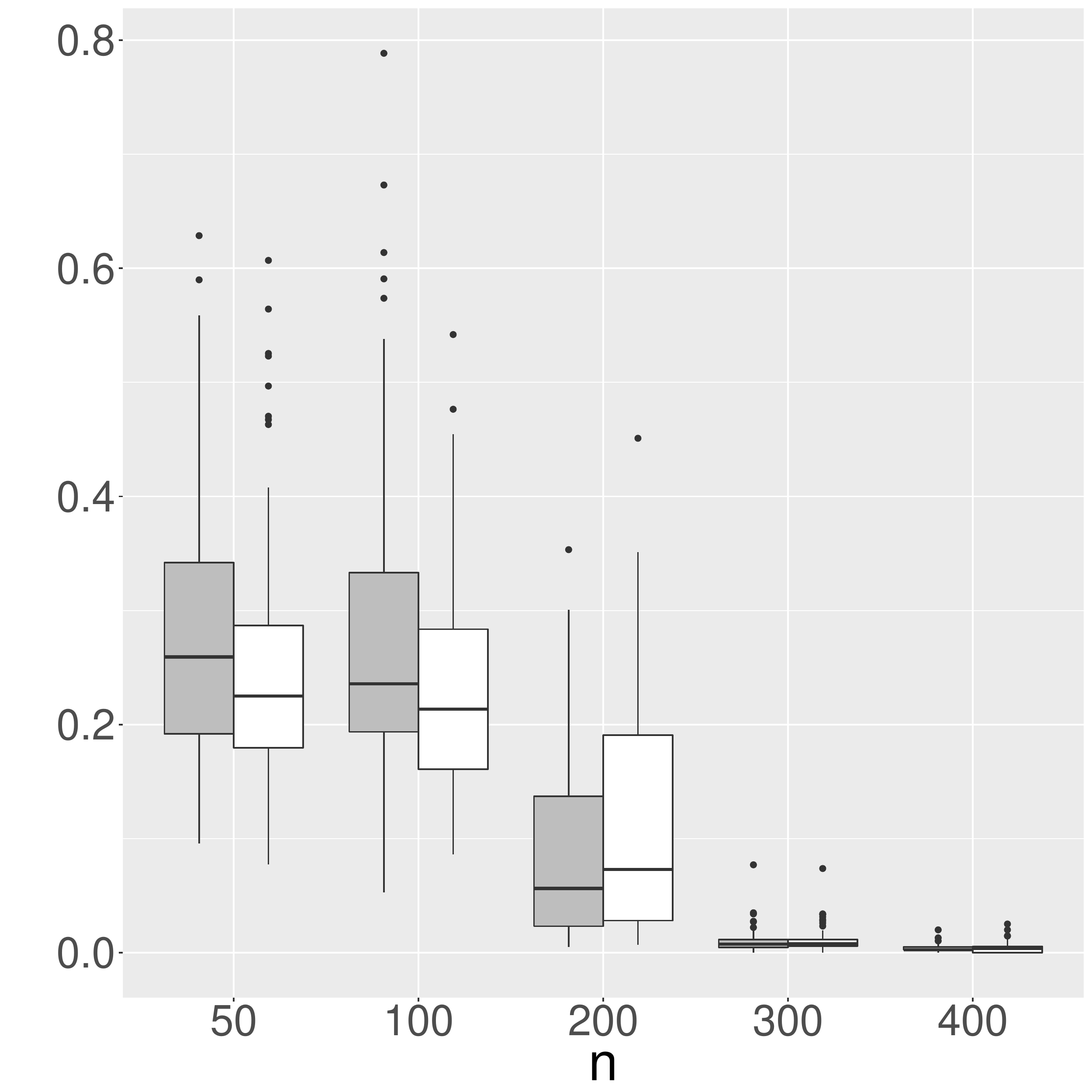}
&\includegraphics*[height=4cm,width=4.5cm]{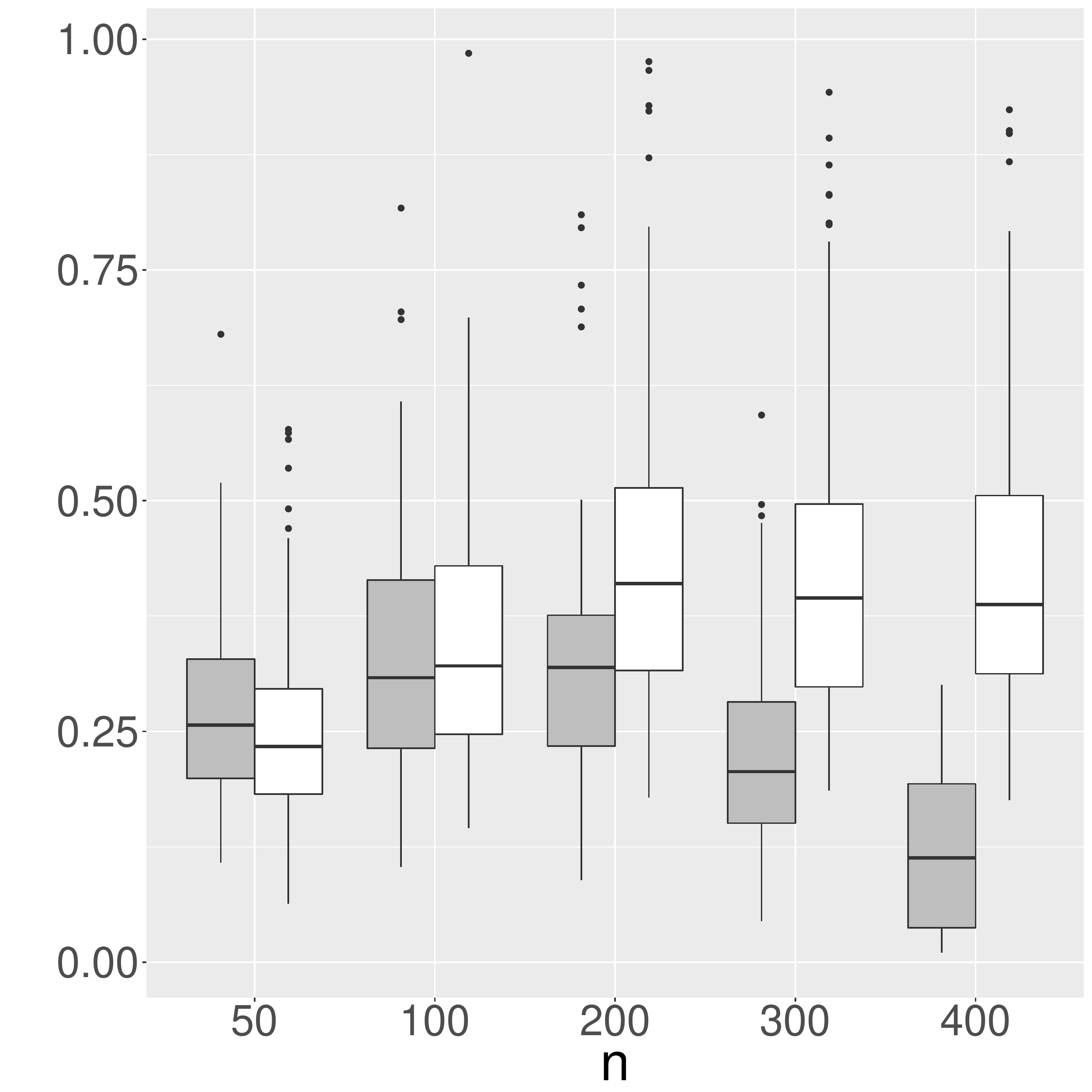}
\includegraphics*[height=4cm,width=4.5cm]{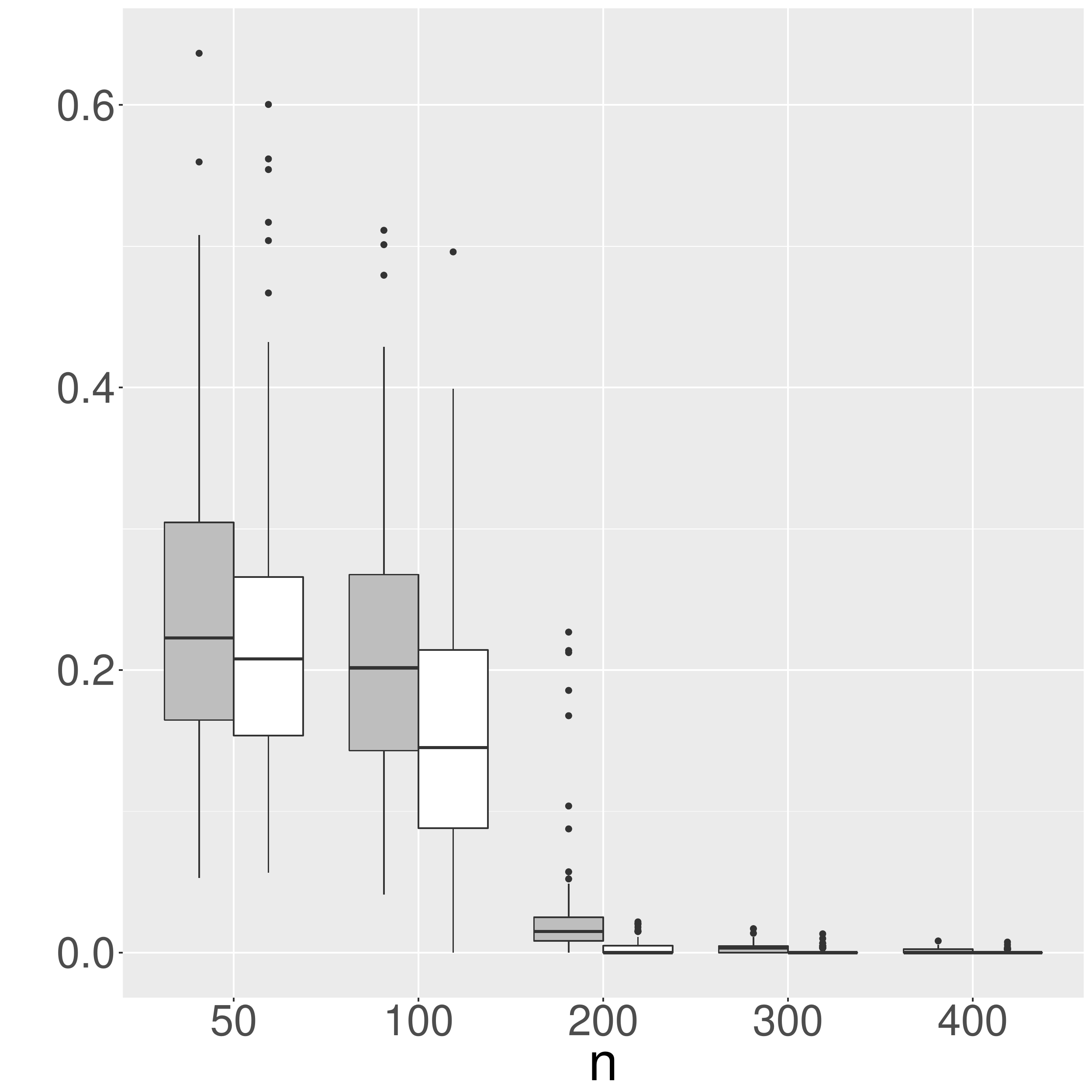}\\
$\sigma=5$&\hspace{-48mm}$a=5$&\hspace{-50mm}$\lambda=4$\\
\includegraphics*[height=4cm,width=4.5cm]{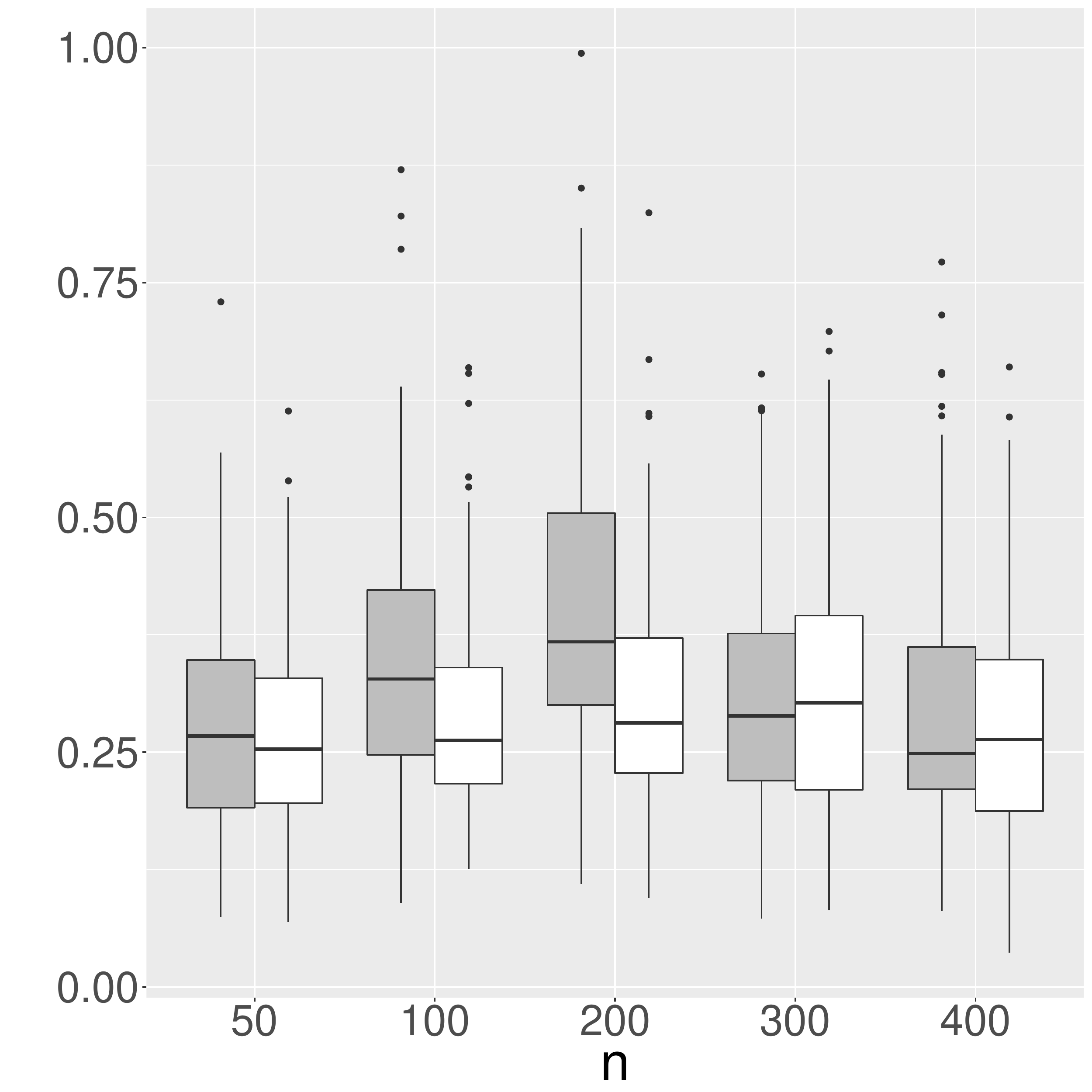}
&\includegraphics*[height=4cm,width=4.5cm]{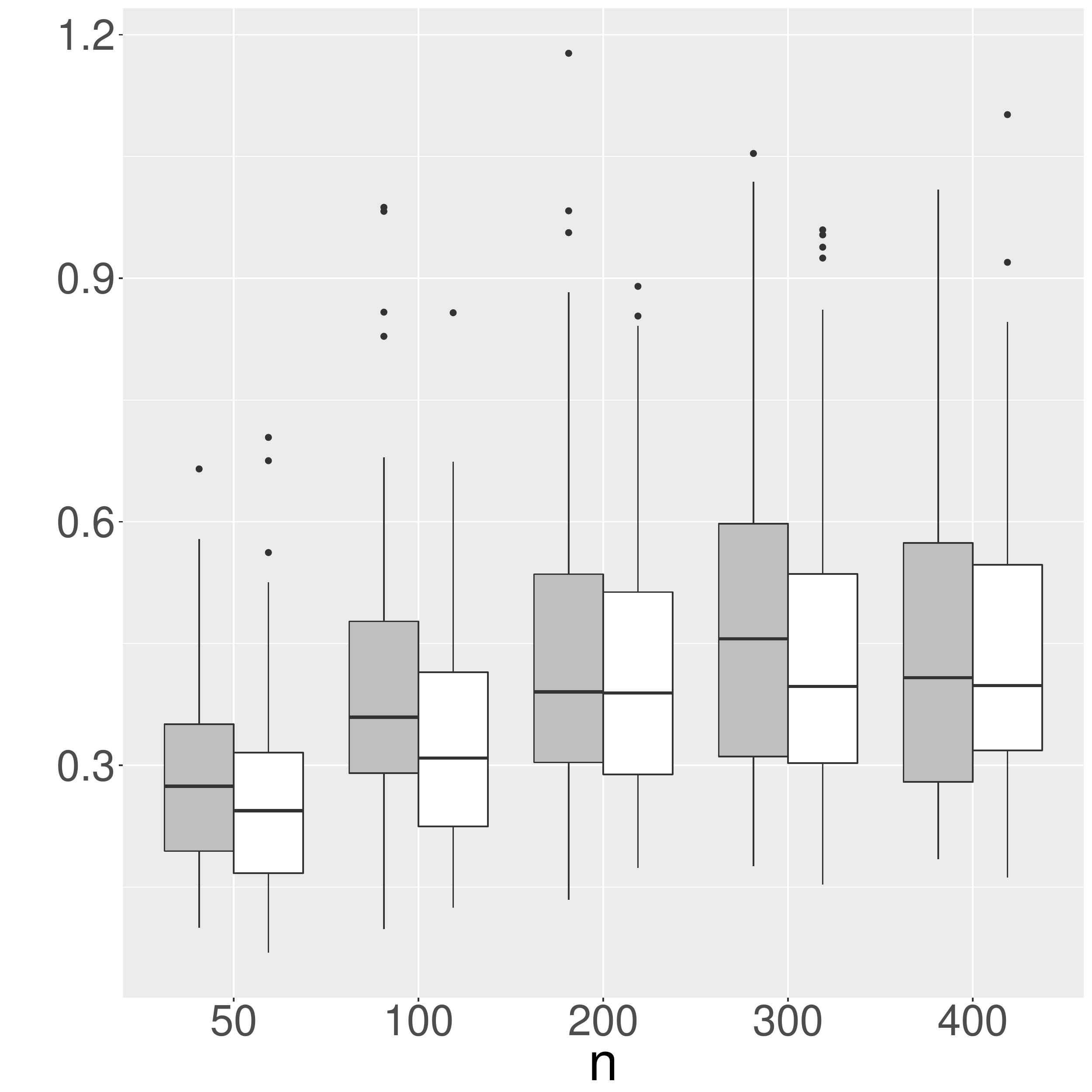}
\includegraphics*[height=4cm,width=4.5cm]{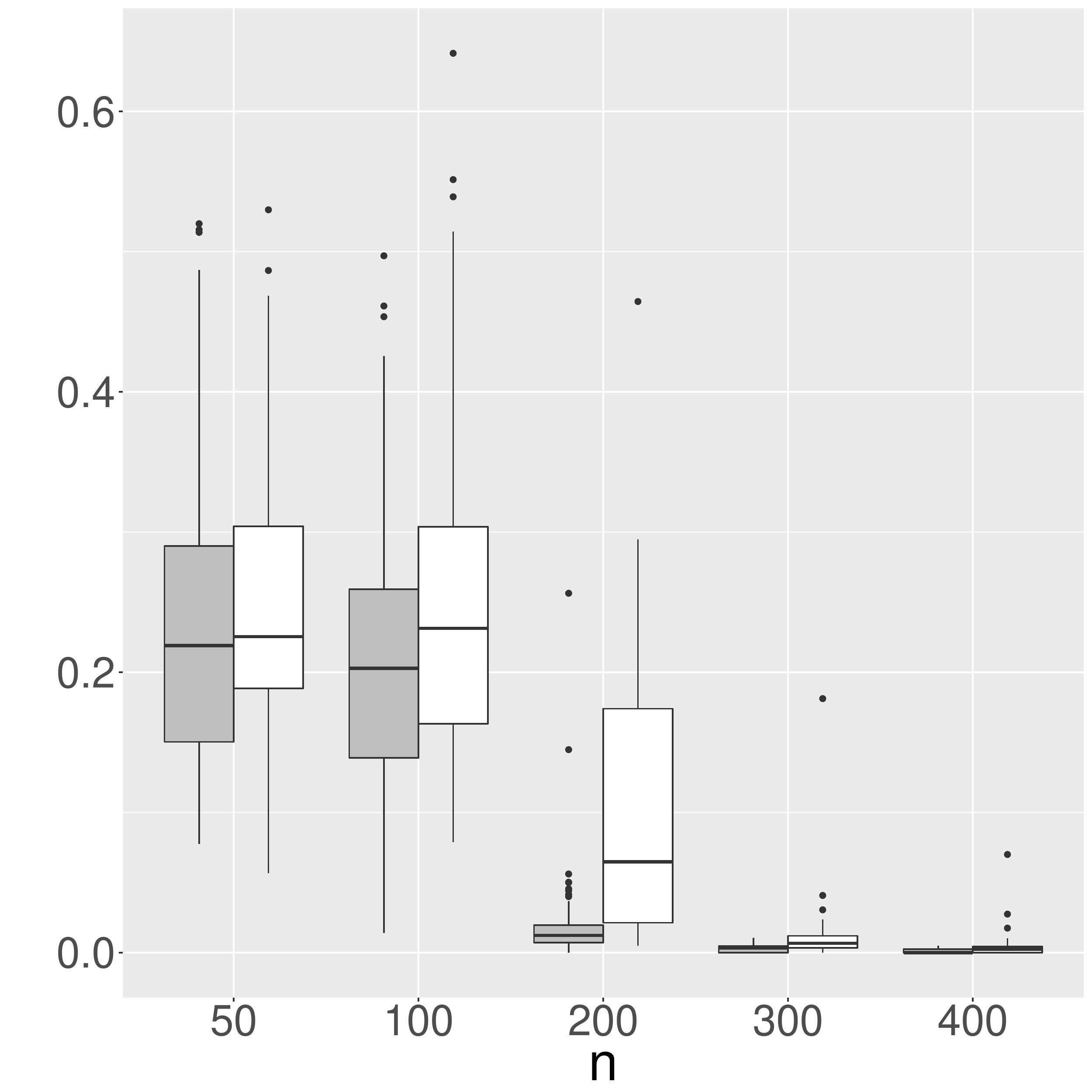}\\
\end{tabular}
\caption{Boxplots of the distances $D$ for \textsf{MuChPoint} and
  \textsf{ecp} in the \textsf{Block Diagonal} configuration.  Left: $\mathcal{L}_1=\mathcal{N}(1,\sigma^2)$,
  $\mathcal{L}_2=\mathcal{N}(0,\sigma^2)$, middle: $\mathcal{L}_1=\mathcal{C}au(1,a)$,
$\mathcal{L}_2=\mathcal{C}au(0,a)$ and right:
  $\mathcal{L}_1=\mathcal{E}xp(2)$,
  $\mathcal{L}_2=\mathcal{E}xp(\lambda)$ for different values of
  $\sigma$, $\lambda$ and $a$. The boxplots associated with
  \textsf{MuChPoint} are displayed in gray and the ones of
  \textsf{ecp} in white.\label{fig:comparaisons_diag}}
\end{center}
\end{figure}

We observe from Figures \ref{fig:comparaisons} and
\ref{fig:comparaisons_diag} that both approaches have similar
statistical performance. However, \textsf{MuchPoint}  performs
better than \textsf{ecp} in the Cauchy case. In the Gaussian
framework, the performance of \textsf{ecp} are a little bit better
for small $n$ and large $\sigma$.


\section{Application to real data} \label{sec:real_data}

In this section, we apply our methodology to publicly available Hi-C data
(\url{http://chromosome.sdsc.edu/mouse/hi-c/download.html})
already studied by \cite{dixon2012topological}.
This technology provides read pairs corresponding to pairs of genomic loci that
physically interacts in the nucleus, see \cite{lieberman2009comprehensive} for further details.
The raw measurements provided by Hi-C data is therefore a list of pairs of
locations along the chromosome, at the nucleotide resolution. These
measurements are often summarized by a symmetric matrix $\boldsymbol{X}$ where each entry $\Xij$
corresponds the total number of read pairs matching in position
$i$ and position $j$, respectively. Positions refer here to a sequence
of non-overlapping windows of equal sizes covering the genome.
The number of windows may vary from one study to another:
\cite{lieberman2009comprehensive} considered a Mb resolution, whereas
\cite{dixon2012topological} went deeper and used windows of 40kb
(called hereafter the resolution).

In the sequel, we analyze the interaction matrices
of Chromosome 19 of the mouse cortex at a resolution 40 kb and
we compare the location of the estimated change-points found by our approach with those obtained
by \cite{dixon2012topological} on the same data since no ground truth is available.
In this case, the matrix that has to be processed is a $n\times n$
symmetric matrix where $n=1534$.

We display in Figure \ref{fig:resume_hic} the estimated matrix $\widehat{\textbf{X}}$ obtained by
using our strategy for various numbers of estimated
change-points. This estimated matrix
is a block-wise constant matrix for which the block boundaries are
estimated by using \textsf{MuChPoint} and the values within each block
correspond to the empirical mean of the observations lying in it.
We can see from this figure that both the diagonal and the extra
diagonal blocks are properly retrieved even when the number of
estimated change-points is not that large.

\begin{figure}[!h]
\begin{center}
\begin{tabular}{ccc}
\hspace{-10mm}\includegraphics*[height=6cm,width=6cm]{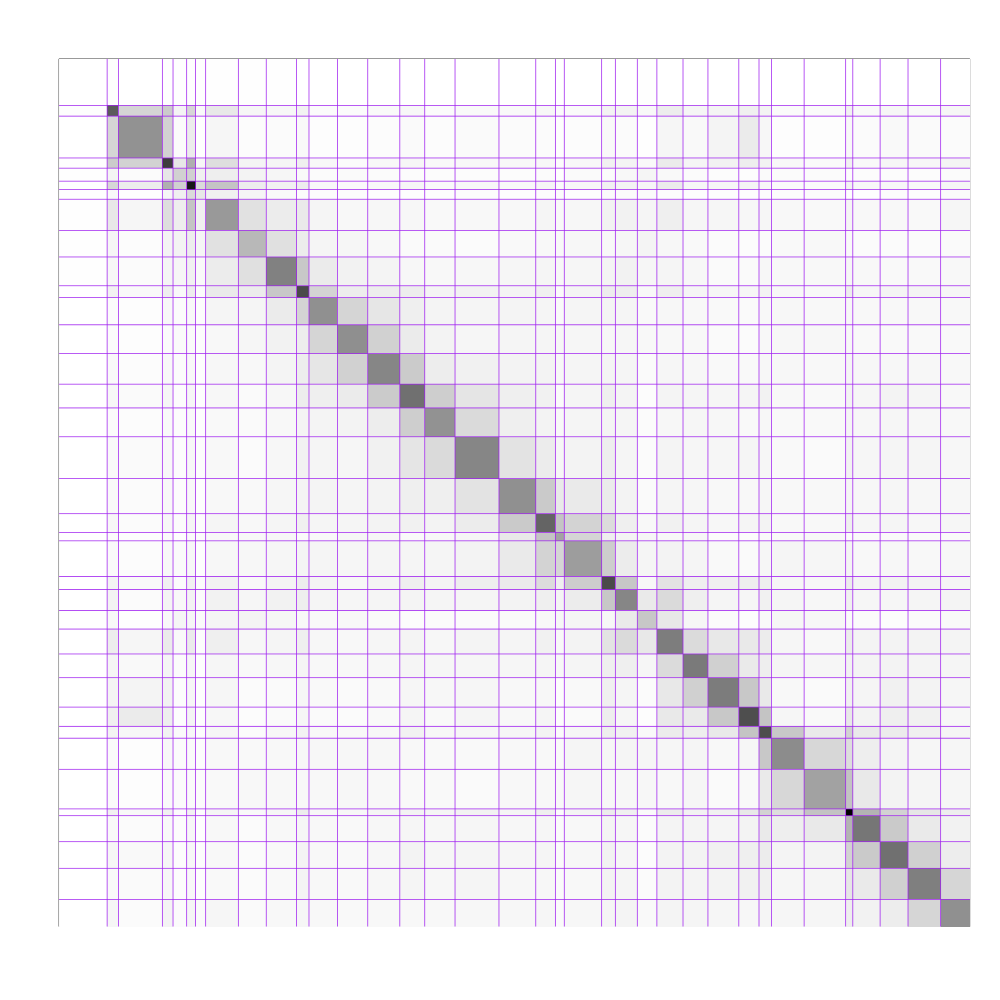}
&\hspace{-10mm} \includegraphics*[height=6cm,width=6cm]{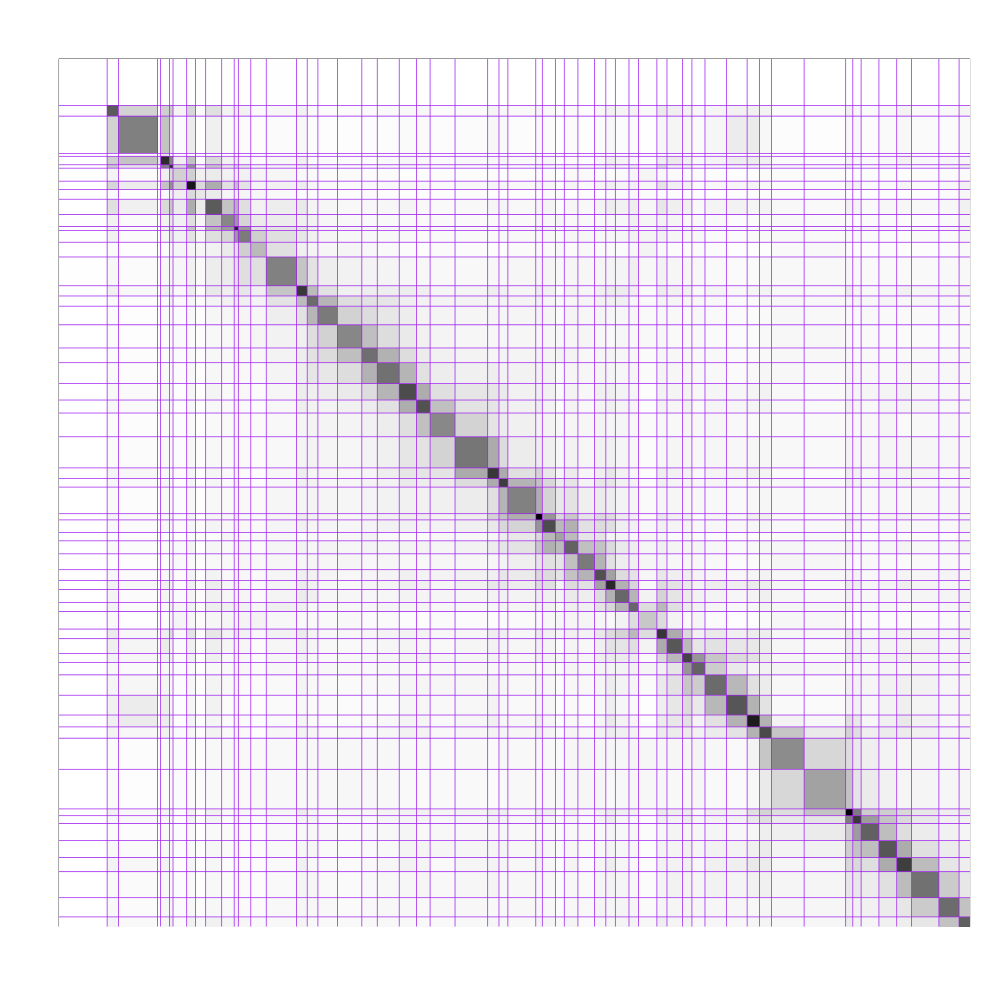}
&\hspace{-10mm}\includegraphics*[height=6cm,width=6cm]{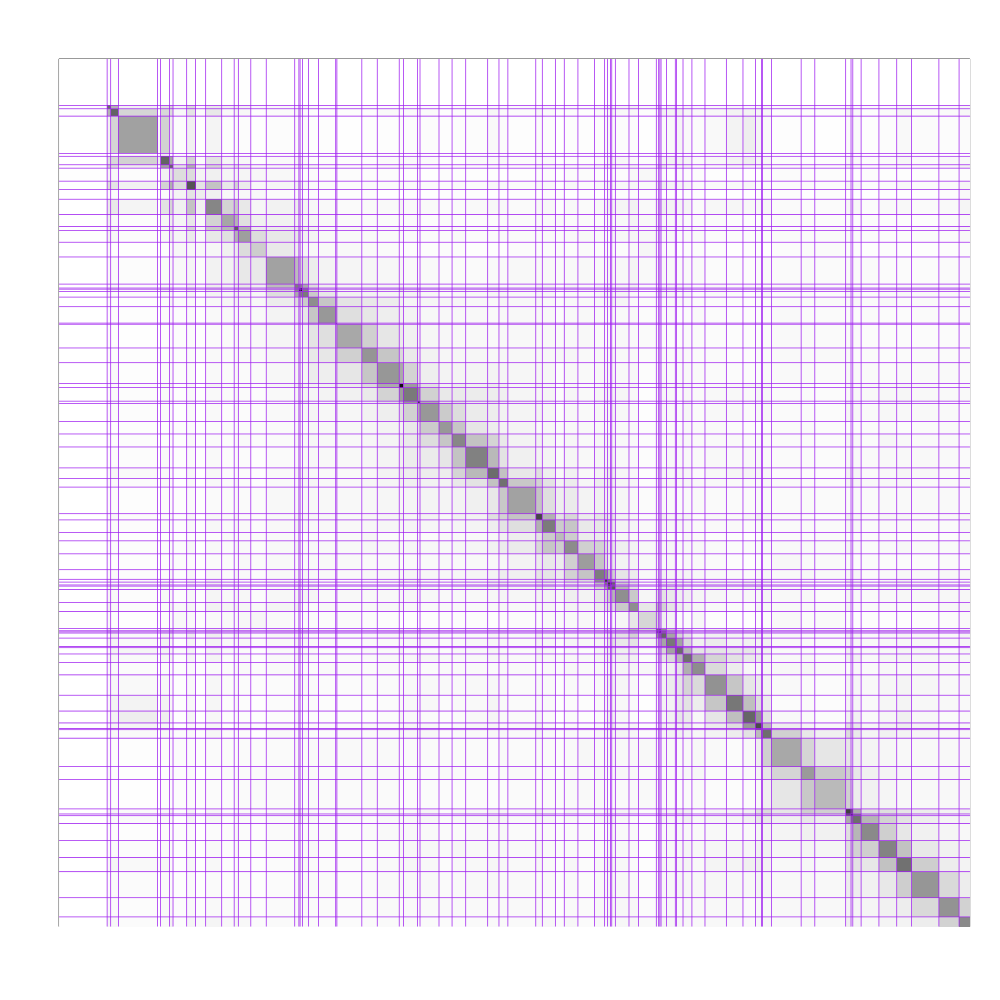}
\end{tabular}
\caption{Estimated matrices $\widehat{\textbf{X}}$ for different
  number of estimated change-points: 35 (left), 55 (middle) and 75 (right).\label{fig:resume_hic}}
\end{center}
\end{figure}

In order to further compare our approach with the one proposed by
\cite{dixon2012topological}, we computed the two parts of the Hausdorff distance
which is defined by
\begin{equation}\label{eq:hausdorff}
d\left(\widehat{\boldsymbol{t}}_B , \widehat{\boldsymbol{t}}\right)
= \max\left(
d_1 \left(\widehat{\boldsymbol{t}}_B, \widehat{\boldsymbol{t}}\right),
d_2 \left(\widehat{\boldsymbol{t}}_B, \widehat{\boldsymbol{t}}\right)
\right)\;,
\end{equation}
where $\widehat{\boldsymbol{t}}$ and $\widehat{\boldsymbol{t}}_B$ are
the change-points found
by our approach and \cite{dixon2012topological}, respectively. In (\ref{eq:hausdorff}),
\begin{eqnarray*}
d_1 \left(\mathbf{a},\mathbf{b}\right) & = & \sup_{b\in\mathbf{b}} \inf_{a\in\mathbf{a}} \left\vert a - b \right\vert \label{eq:hausd_1}, \\
d_2 \left(\mathbf{a},\mathbf{b}\right) & = & d_1 \left(\mathbf{b},\mathbf{a}\right)\label{eq:hausd_2}.
\end{eqnarray*}
More precisely, Figure \ref{fig:hausdorff} displays the boxplots of
the $d_1$ and $d_2$ parts of the Hausdorff distance without taking the
supremum in white and gray for different values of the estimated number of change-points, respectively.

 \begin{figure}[!h]
\begin{center}
   \includegraphics[height=6cm,width=9cm]{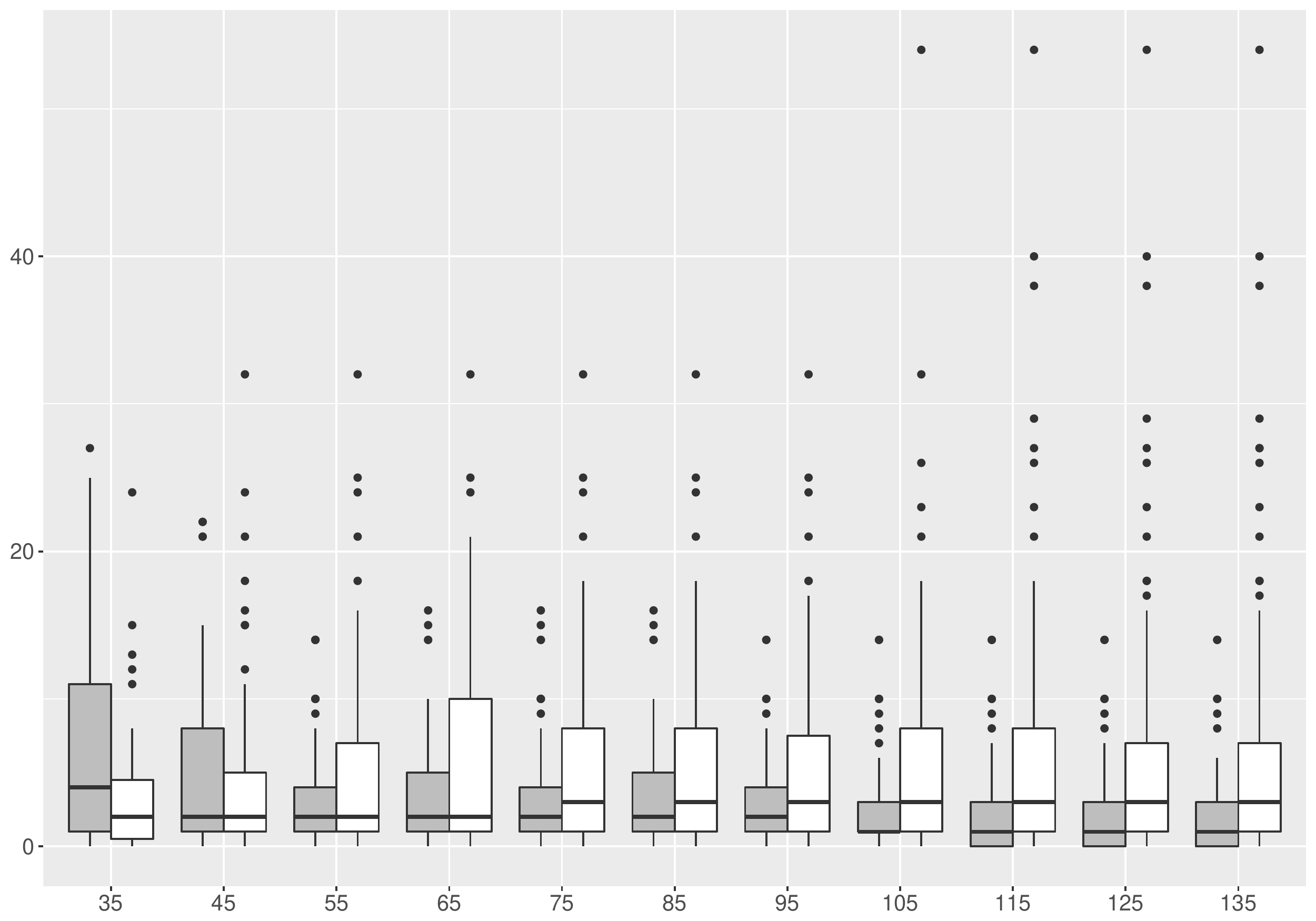}
\caption{\label{fig:hausdorff} Boxplots for the infimum parts of the Hausdorff distances $d_1$ (white) and $d_2$ (gray)
between the change-points found by \cite{dixon2012topological} and our approach
 for different values of the estimated number of change-points.}
\end{center}
\end{figure}

We can see from this figure that some differences exist between the two
approaches.  However, when the number of estimated change-points
considered in our methodology is on a par with the one of
\cite{dixon2012topological}, the position of the block boundaries are
very close as displayed in Figure \ref{fig:topological}.

 \begin{figure}[!h]
\begin{center}
   \includegraphics[height=9cm,width=9cm]{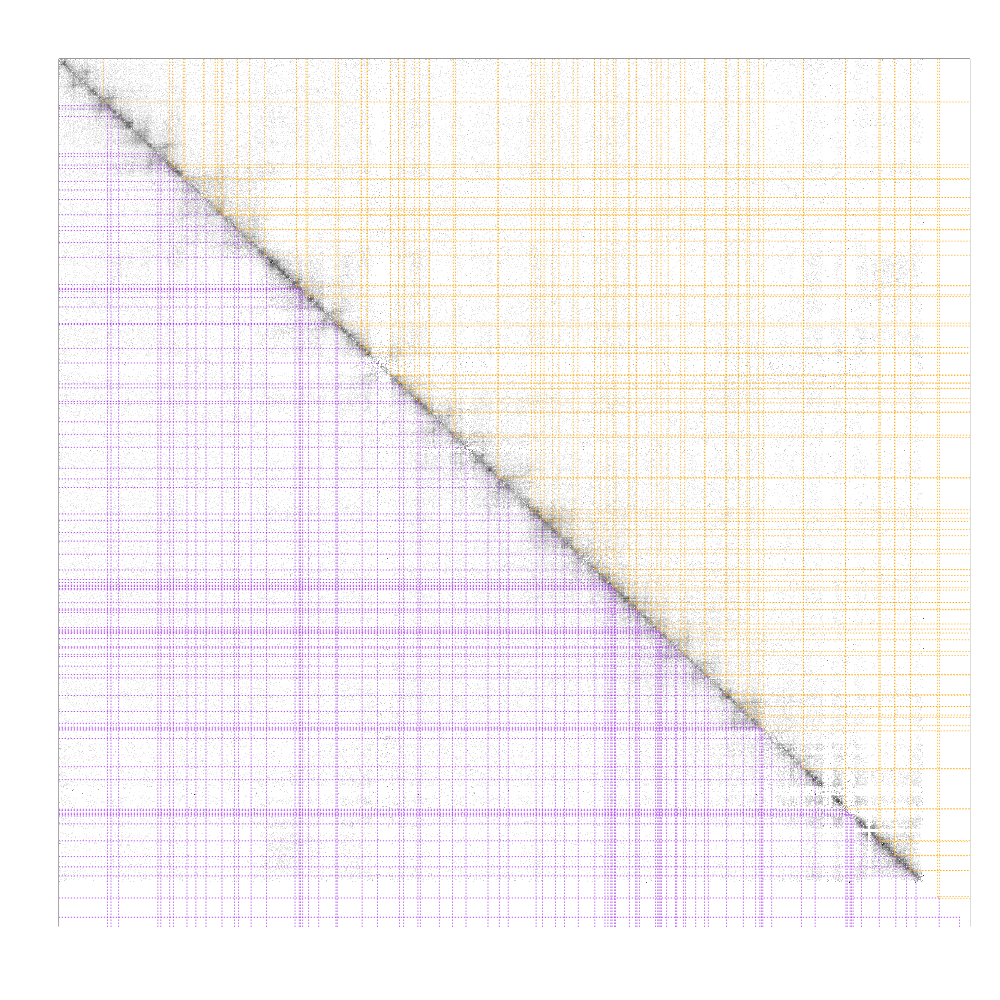}
\caption{\label{fig:topological} Topological domains detected by \cite{dixon2012topological} (upper
 triangular part of the matrix) and by our method (lower triangular part of
 the matrix).}
\end{center}
\end{figure}


\newpage
\ \\

\section{Conclusion}

In this paper, we designed a novel nonparametric and fully automated method for retrieving the block boundaries of non-overlapping blocks in
large matrices modeled as symmetric matrices of random variables
having their distribution changing from one block to the other. Our approach is implemented in the R package \textsf{MuChPoint} which will be available from the
Comprehensive R Archive Network (CRAN). In the course of this study, we have
shown that our method, inspired by a generalization of nonparametric multiple sample tests to multivariate data, 
has two main features which make it very attractive.
Firstly, it is a nonparametric approach which showed very good statistical performances from a practical point of view.
Secondly, its low computational burden makes its use possible on
large Hi-C data matrices.

\section{Proofs} \label{sec:proofs}

In this section, we prove Theorems \ref{thm_OP1_1rupture},
\ref{Thm:Multiple} and Equation (\ref{eq:dyn_recurs}).
The proofs of Theorems \ref{thm_OP1_1rupture} and
\ref{Thm:Multiple} given below use technical lemmas established in Section \ref{sec:tech_lemmas}.

\subsection{Proof of Theorem \ref{thm_OP1_1rupture}}\label{proof2}

For proving Theorem~\ref{thm_OP1_1rupture}, we first compute the expectation of $S_n(n_1)$.

\begin{align*}
\Esp{S_n(n_1)} & = \sum_{i=1}^{\n} \Esp{U_{n,i}^2(n_1)} \\
& = \frac{1}{\n\nun(\n-\nun)} \sum_{i=1}^{\n} \Esp{\left(\sum_{j_0=1}^{n_1} \sum_{j_1=n_1+1}^{n} h(X_{i,j_0},X_{i,j_1})\right)^2} \\
& = \frac{1}{\n\nun(\n-\nun)} \sum_{i=1}^{\n} \sum_{1 \leq j_0,k_0 \leq n_1} \sum_{n_1+1 \leq j_1,k_1 \leq n} \Esp{h(X_{i,j_0},X_{i,j_1}) h(X_{i,k_0},X_{i,k_1})} \\
& = \frac{1}{\n\nun(\n-\nun)} \sum_{i=1}^{\n} \left\{ \sum_{j_0=1}^{n_1} \sum_{j_1=n_1+1}^{n} \Esp{h^2(X_{i,j_0},X_{i,j_1})} \right. \\
& \quad + \sum_{j_0=1}^{n_1} \sum_{n_1+1 \leq j_1 \neq k_1 \leq n} \Esp{h(X_{i,j_0},X_{i,j_1}) h(X_{i,j_0},X_{i,k_1})} \\
& \quad + \sum_{1 \leq j_0 \neq k_0 \leq n_1} \sum_{j_1=n_1+1}^{n} \Esp{h(X_{i,j_0},X_{i,j_1}) h(X_{i,k_0},X_{i,j_1})} \\
& \quad \left. + \sum_{1 \leq j_0 \neq k_0 \leq n_1} \sum_{n_1+1 \leq j_1 \neq k_1 \leq n} \Esp{h(X_{i,j_0},X_{i,j_1}) h(X_{i,k_0},X_{i,k_1})} \right\}.
\end{align*}

By using Lemma~\ref{lemma_1rupture}, we get that

\begin{align*}
\Esp{S_n(n_1)} & = \frac{1}{\n\nun(\n-\nun)} \sum_{i=1}^{\n} \left\{ n_1(n-n_1)+ \frac{1}{3}n_1 (n-n_1)(n-n_1-1) + \frac{1}{3}n_1(n_1-1) (n-n_1)\right\} \\
& = 1 + \frac{n-n_1-1}{3} + \frac{n_1-1}{3} = \frac{n+1}{3}.
\end{align*}

In order to derive the asymptotic behavior of $S_n(n_1)$ we write  the centered version of $S_n(n_1)$ as follows:
\begin{align*}
S_n(n_1) - \Esp{S_n(n_1)} & =  \frac{1}{\n\nun(\n-\nun)} \sum_{i=1}^{\n} \left(\sum_{j_0=1}^{n_1} \sum_{j_1=n_1+1}^{n} h(X_{i,j_0},X_{i,j_1})\right)^2 - \frac{n+1}{3} \\
& = \frac{1}{\n\nun(\n-\nun)} \left\{ \sum_{i=1}^{\n} \sum_{j_0=1}^{n_1} \sum_{j_1=n_1+1}^{n} \left[h^2(X_{i,j_0},X_{i,j_1})-1\right] \right. \\
& \quad + \sum_{i=1}^{\n} \sum_{j_0=1}^{n_1} \sum_{n_1+1 \leq j_1 \neq k_1 \leq n} \left[h(X_{i,j_0},X_{i,j_1}) h(X_{i,j_0},X_{i,k_1})-1/3\right] \\
& \quad + \sum_{i=1}^{\n} \sum_{1 \leq j_0 \neq k_0 \leq n_1} \sum_{j_1=n_1+1}^{n} \left[h(X_{i,j_0},X_{i,j_1}) h(X_{i,k_0},X_{i,j_1})-1/3\right] \\
& \quad \left. + \sum_{i=1}^{\n} \sum_{1 \leq j_0 \neq k_0 \leq n_1} \sum_{n_1+1 \leq j_1 \neq k_1 \leq n} h(X_{i,j_0},X_{i,j_1}) h(X_{i,k_0},X_{i,k_1}) \right\}\\
&=: \frac{1}{\n\nun(\n-\nun)} \left\{A+B+C+D\right\},
\end{align*}
where each term of this equality is centered. First, we observe that $A=0$ a.s.\,(almost surely) by Assertion~\ref{lem1:ii} of Lemma~\ref{lemma_1rupture}.

By using the Markov inequality we get that for all $\varepsilon>0$,

\begin{align*}
&\P\left(\left|\frac{B}{\sqrt{n}}\right|>\frac{6n^3}{\varepsilon}\right)\\
&\leq\varepsilon n^{-7/2}\Esp{|B|}/6\\
&\leq
\frac{\varepsilon} {6n^{7/2}}\sum_{i=1}^{\n}\Esp{\left| \sum_{j_0=1}^{n_1} \sum_{n_1+1 \leq j_1 \neq k_1 \leq n} \left[h(X_{i,j_0},X_{i,j_1}) h(X_{i,j_0},X_{i,k_1})-1/3\right]\right|}.
\end{align*}

By using the Cauchy-Schwarz inequality, we thus get that

\begin{align*}
&\P\left(\left|\frac{B}{\sqrt{n}}\right|>\frac{6n^3}{\varepsilon}\right)\\
&\leq\frac{\varepsilon} {6n^{7/2}}\sum_{i=1}^{\n}\left(\Esp{\left( \sum_{j_0=1}^{n_1} \sum_{n_1+1 \leq j_1 \neq k_1 \leq n} \left[h(X_{i,j_0},X_{i,j_1}) h(X_{i,j_0},X_{i,k_1})-1/3\right]\right)^2} \right)^{1/2}\\
&=\frac{\varepsilon} {6n^{7/2}}\sum_{i=1}^{\n}\Bigg( \sum_{1 \leq j_0,j_0^{\prime} \leq n_1} \sum_{n_1+1 \leq j_1 \neq k_1 \leq n} \sum_{n_1+1 \leq j_1^{\prime} \neq k_1^{\prime} \leq n}\E\Big[\left(h(X_{i,j_0},X_{i,j_1}) h(X_{i,j_0},X_{i,k_1})-1/3\right)\\
&\hskip210pt\times\left(h(X_{i,j_0^{\prime}},X_{i,j_1^{\prime}})h(X_{i,j_0^{\prime}},X_{i,k_1^{\prime}})-1/3\right)\Big]\Bigg)^{1/2}.
\end{align*}

By Assertion~\ref{lem1:iii} of Lemma~\ref{lemma_1rupture}, the above expectation is equal to zero when the cardinality of the set of indices $\left\{j_0,j_0^{\prime},j_1,j_1^{\prime},k_1,k_1^{\prime}\right\}$ equals 6. Indeed, the right-hand and left-hand side of the product in the expectation are independent in that case. Thus, only the cases where the cardinality of the set is smaller or equal to 5 have to be considered.
Moreover, note that 
$$
\big|(h(x,y)h(z,t)-1/3)\times(h(x',y')h(z',t')-1/3)\big| \leq 16/9 \leq 2,
$$ 
for all $x,y,z,t,x',y',z',t'$. Hence we get that, for all $\varepsilon>0$,

\begin{align}\label{Thm:B}
\P\left(\left|\frac{B}{\sqrt{n}}\right|>\frac{6n^3}{\varepsilon}\right)\leq
\frac{\varepsilon} {6n^{7/2}}\sum_{i=1}^{\n}2n^{5/2}=\varepsilon/3.
\end{align}
%
%
Using similar arguments, we get that for all $\varepsilon>0$,

\begin{align}\label{Thm:C}
\P\left(\left|\frac{C}{\sqrt{n}}\right|>\frac{6n^3}{\varepsilon}\right)\leq\varepsilon/3.
\end{align}
By using the Markov and the Cauchy-Schwarz inequalities as previously, we get that, for all $\varepsilon>0$,
\begin{align*}
&\P\left(\left|\frac{D}{\sqrt{n}}\right|>\frac{3n^3}{\varepsilon}\right)\\
&\leq\varepsilon n^{-7/2}\Esp{|D|}/3\\
&\leq\frac{\varepsilon}{3n^{7/2}}
\Esp{\left|\sum_{i=1}^{\n} \sum_{1 \leq j_0 \neq k_0 \leq n_1} \sum_{n_1+1 \leq j_1 \neq k_1 \leq n} h(X_{i,j_0},X_{i,j_1}) h(X_{i,k_0},X_{i,k_1})\right|} \\
&\leq\frac{\varepsilon}{3n^{7/2}}
\left(\Esp{\left(\sum_{i=1}^{\n} \sum_{1 \leq j_0 \neq k_0 \leq n_1} \sum_{n_1+1 \leq j_1 \neq k_1 \leq n} h(X_{i,j_0},X_{i,j_1}) h(X_{i,k_0},X_{i,k_1})\right)^2} \right)^{1/2}\\
&=\frac{\varepsilon}{3n^{7/2}}
\Big(\E\Big[\sum_{i=1}^{\n} \sum_{1 \leq j_0 \neq k_0 \leq n_1} \sum_{n_1+1 \leq j_1 \neq k_1 \leq n} h(X_{i,j_0},X_{i,j_1}) h(X_{i,k_0},X_{i,k_1})\\
&\hskip80pt\times\sum_{i^{\prime}=1}^{\n} \sum_{1 \leq j_0^{\prime} \neq k_0^{\prime} \leq n_1} \sum_{n_1+1 \leq j_1^{\prime} \neq k_1^{\prime} \leq n} h(X_{i^{\prime},j_0^{\prime}},X_{i^{\prime},j_1^{\prime}}) h(X_{i^{\prime},k_0^{\prime}},X_{i^{\prime},k_1^{\prime}})\Big] \Big)^{1/2}.
\end{align*}
The above expectation is equal to zero when the cardinality of $\left\{i,i',j_0,j_0^{\prime},k_0,k_0^{\prime},j_1,j_1^{\prime},k_1,k_1^{\prime}\right\}$ is greater than 8 and smaller than 10 by Assertion~\ref{lem1:v} of Lemma~\ref{lemma_1rupture}. Only the cases where the cardinality of the set is smaller
than 7 have to be considered. Observe moreover that 
$$
\big|h(x,y)h(z,t)h(x',y')h(z',t')\big| \leq 1, \textrm{ for all } x,y,z,t,x',y',z',t'\in\mathbb{R}. 
$$
Therefore, for all $\varepsilon>0$, we get,

\begin{align}\label{Thm:D}
\P\left(\left|\frac{D}{\sqrt{n}}\right|>\frac{3n^3}{\varepsilon}\right)\leq
\frac{\varepsilon}{3n^{7/2}}\times n^{7/2}=\varepsilon/3.
\end{align}
Finally, by combining \eqref{Thm:B}, \eqref{Thm:C} and \eqref{Thm:D}, we obtain that, for all $\varepsilon>0$,
\begin{eqnarray*}
\P\left(\n\nun(\n-\nun)\times\frac{\left|S_n(n_1) - \Esp{S_n(n_1)}\right|}{\sqrt{n}}>\frac{15n^3}{\varepsilon}\right)&\leq&
\varepsilon,
\end{eqnarray*}
which can be rewritten as
\begin{eqnarray*}
\P\left(\frac{\left|S_n(n_1) - \Esp{S_n(n_1)}\right|}{\sqrt{n}}>\frac{15n^2}{\varepsilon\nun(\n-\nun)}\right)&\leq&
\varepsilon.
\end{eqnarray*}
Since we assumed that $n_1/n \rightarrow \tau_1$ as $n\to\infty$, we get that
\[\frac{S_n(n_1) - \Esp{S_n(n_1)}}{\sqrt{n}}=O_{\mathbb{P}}(1),\]
which concludes the proof of Theorem \ref{thm_OP1_1rupture}.

\subsection{Proof of Theorem \ref{Thm:Multiple}}\label{proofL}
Let us start with the computation of the expectation of $\SnunL$. 
First observe that, for any $i\in\{1,\ldots,n\}$ and $\ell\in\{0,\ldots,L\}$,
\begin{eqnarray}\label{eq:esp_rank_1}
\left(\Rbarli-\frac{\n+1}{2}\right)^2
&=&\left(\frac{1}{\nlun-\nl}\sum_{j=\nl+1}^{\nlun}\Rji-\frac{\n+1}{2}\right)^2\nonumber\\
&=&\frac{1}{(\nlun-\nl)^2}\sum_{j=\nl+1}^{\nlun}\left(\Rji-\frac{\n+1}{2}\right)^2\nonumber\\
&&+\frac{1}{(\nlun-\nl)^2}\sum_{\nl+1\leq j\neq\jp\leq\nlun}\left(\Rji-\frac{\n+1}{2}\right)\left(R_{\jp}^{(i)}-\frac{\n+1}{2}\right)\nonumber\\
&=&\frac{1}{(\nlun-\nl)^2}\left(\sum_{j=\nl+1}^{\nlun}A_j^{(i)}+\sum_{\nl+1\leq j\neq\jp\leq\nlun}B_{jj^{\prime}}^{(i)}\right),
\end{eqnarray}
where
\begin{equation*}
A_j^{(i)} =\left(\Rji-\frac{\n+1}{2}\right)^2 \textrm{ and }
B_{jj^{\prime}}^{(i)}=\left(\Rji-\frac{\n+1}{2}\right)\left(R_{\jp}^{(i)}-\frac{\n+1}{2}\right).
\end{equation*}
By using the definition \eqref{Def:Rang} of $\Rji$, we get that, 
\begin{eqnarray}\label{aji}
A_j^{(i)}&=&\left(\sum_{k=1}^{\n}\1_{\{\Xik\leq\Xij\}}-\frac{\n+1}{2}\right)^2
=\left(1+\sum_{\substack{k=1 \\ k\neq j}}^{\n}
\1_{\{\Xik\leq\Xij\}}-\frac{\n+1}{2}\right)^2\nonumber\\
&=&\left(\sum_{\substack{k=1 \\ k\neq j}}^{\n}
\left(\1_{\{\Xik\leq\Xij\}}-\frac{1}{2}\right)\right)^2\nonumber\\
&=&\sum_{\substack{k=1 \\ k\neq j}}^{\n}
g(\Xik,\Xij)^2+\sum_{\substack{k=1 \\ k\neq j}}^{\n}\sum_{\substack{\kp=1 \\ \kp\neq k \\ k\neq j}}^{\n}g(\Xik,\Xij)g(\Xikp,\Xij),
\end{eqnarray}
where $g(x,y)=\1_{x\leq y}-\frac{1}{2}$
and, by Assertions~\ref{lem2:i} and \ref{lem2:ii} of Lemma~\ref{Lem:Esp}, we get
\begin{eqnarray}\label{eq:esp_Aij}
\Esp{A_j^{(i)}}=\frac{1}{4}(n-1)+\frac{1}{12}(n-1)(n-2)=\frac{(n-1)(n+1)}{12}.
\end{eqnarray}
Then, we decompose $B_{jj^{\prime}}^{(i)}$ in the four following terms. 
\begin{eqnarray}\label{bjji}
B_{jj^{\prime}}^{(i)}
&=& \left(\Rji-\frac{\n+1}{2}\right)\left(R_{\jp}^{(i)}-\frac{\n+1}{2}\right)\nonumber\\
&=& \left(\sum_{k=1}^{\n}\1_{\{\Xik\leq\Xij\}}-\frac{\n+1}{2}\right)\left(\sum_{k'=1}^{\n}\1_{\{X_{i,k'}\leq X_{i,j'}\}}-\frac{\n+1}{2}\right)\nonumber\\
&=&\left(1+\sum_{\substack{k=1 \\ k\neq j}}^{\n}
\1_{\{\Xik\leq\Xij\}}-\frac{\n+1}{2}\right)\left(1+\sum_{\substack{k'=1 \\ k'\neq j'}}^{\n}
\1_{\{X_{i,k'}\leq X_{i,j'}\}}-\frac{\n+1}{2}\right)\nonumber\\
&=&\left(\sum_{\substack{k=1 \\ k\neq j}}^{\n}\left(
\1_{\{\Xik\leq\Xij\}}-\frac{1}{2}\right)\right)\left(\sum_{\substack{k'=1 \\ k'\neq j'}}^{\n}
\left(\1_{\{X_{i,k'}\leq X_{i,j'}\}}-\frac{1}{2}\right)\right)\nonumber\\
&=&\sum_{\substack{k=1 \\ k\neq j}}^{\n}\sum_{\substack{\kp=1 \\ \kp\neq j^{\prime}}}^{\n}g(\Xik,\Xij)g(\Xikp,\Xijp)\nonumber\\
&=&g(\Xijp,\Xij)g(\Xij,\Xijp)\nonumber\\
&&+\sum_{\substack{k=1 \\ k\neq j \\ k\neq j^{\prime}}}^{\n}
g(\Xik,\Xij)g(\Xij,\Xijp)\nonumber\\
&&+\sum_{\substack{\kp=1 \\ \kp\neq j^{\prime} \\ \kp\neq j}}^{\n}
g(\Xijp,\Xij)g(\Xikp,\Xijp)\nonumber\\
&&\sum_{\substack{k=1 \\ k\neq j \\ k\neq j^{\prime}}}^{\n}\sum_{\substack{\kp=1 \\ \kp\neq j^{\prime} \\ \kp\neq j}}^{\n}
g(\Xik,\Xij)g(\Xikp,\Xijp)\nonumber\\
&=:&B_1+B_2+B_3+B_4.
\end{eqnarray}
By Lemma \ref{Lem:Esp},
we obtain that 
$$
\Esp{B_1}=-\frac{1}{4},\; \Esp{B_2}=\Esp{B_3}=-\frac{n-2}{12}
\textrm{ and }\Esp{B_4}=\frac{n-2}{12},
$$
since the only term in the sum defining $B_4$ having a non
null expectation is the one for which $k=\kp$.
Hence,
\begin{equation}\label{eq:esp_Bij}
\Esp{B_{jj^{\prime}}^{(i)}}=-\frac{1}{4}-2\times\frac{n-2}{12}+\frac{n-2}{12}=-\frac14-\frac{n-2}{12}=-\frac{n+1}{12}.
\end{equation}
By (\ref{eq:esp_rank_1}), (\ref{eq:esp_Aij}) and (\ref{eq:esp_Bij}),
\begin{eqnarray*}
&&\Esp{\left(\Rbarli-\frac{\n+1}{2}\right)^2}\\
&=&\frac{1}{(\nlun-\nl)^2}\left\{\sum_{j=\nl+1}^{\nlun}\frac{(n-1)(n+1)}{12}
-\sum_{\nl+1\leq
  j\neq\jp\leq\nlun}\frac{(n+1)}{12}\right\}\\
&=&\frac{1}{(\nlun-\nl)}\frac{(n-1)(n+1)}{12}-\frac{(\nlun-\nl)(\nlun-\nl-1)}{(\nlun-\nl)^2}\times\frac{(n+1)}{12}\\
&=&\frac{1}{(\nlun-\nl)}\left\{\frac{(n-1)(n+1)}{12}-\frac{(n+1)(\nlun-\nl-1)}{12}\right\}.
\end{eqnarray*}
By (\ref{Def:Stat}), we get that
\begin{eqnarray*}
\Esp{\SnunL}&=&
\frac{4}{\n^2}
\sum_{\ell=0}^{L}\left(\nlun-\nl\right)
\sum_{i=1}^{\n}\Esp{\left(\Rbarli-\frac{\n+1}{2}\right)^2}\\
&=&\frac{4}{\n}\sum_{\ell=0}^{L}\left\{\frac{(n-1)(n+1)}{12}-\frac{(n+1)(\nlun-\nl-1)}{12}\right\}\\
&=&\frac{4(n+1)}{12\n}\left\{(L+1)(n-1)-(n-L-1)\right\}=\frac{L(\n+1)}{3}.
\end{eqnarray*}
Now we focus on the asymptotic behavior of $\SnunL$. For this, we decompose the centered version of $\SnunL$ as follows.
\begin{align*}
&\SnunL-\Esp{\SnunL}\\
&=\frac{4}{\n^2}
\sum_{\ell=0}^{L}\left(\nlun-\nl\right)
\sum_{i=1}^{\n}\left(\Rbarli-\frac{\n+1}{2}\right)^2-\frac{L(n+1)}{3}\\
&=\frac{4}{\n^2}\sum_{\ell=0}^{L}\left(\nlun-\nl\right)
\sum_{i=1}^{\n}\left[
\frac{1}{(\nlun-\nl)^2}
\left(
\sum_{j=\nl+1}^{\nlun}\left(A_j^{(i)}-\Esp{A_j^{(i)}}\right)\right.\right.\\
&\hspace{7cm}+\left.\left.\sum_{\nl+1\leq j\neq\jp\leq\nlun}\left(B_{jj^{\prime}}^{(i)}-\Esp{B_{jj^{\prime}}^{(i)}}\right)
\right)
\right]\\
&=\frac{4}{\n^2}\sum_{\ell=0}^{L}\frac{1}{\nlun-\nl}
\sum_{i=1}^{\n}
\sum_{t=1}^7Z_i^{(t)},
\end{align*}
where $A_j^{(i)}$ and $B_{jj^{\prime}}^{(i)}$ are defined in
\eqref{aji} and \eqref{bjji}, and the $Z_i^{(t)}$ are defined as follows:
\begin{eqnarray*}
Z_i^{(1)}&=&\sum_{j=\nl+1}^{\nlun}\sum_{\substack{k=1 \\ k\neq j}}^{\n}
\left\{
g(\Xik,\Xij)^2-\frac{1}{4}\right\},\\
Z_i^{(2)}&=&\sum_{j=\nl+1}^{\nlun}\sum_{\substack{k=1 \\ k\neq j}}^{\n}\sum_{\substack{\kp=1 \\ \kp\neq k \\ k\neq j}}^{\n}\left\{g(\Xik,\Xij)g(\Xikp,\Xij)-\frac{1}{12}\right\},\\
Z_i^{(3)}&=&\sum_{\nl+1\leq j\neq\jp\leq\nlun}\left\{g(\Xijp,\Xij)g(\Xij,\Xijp)+\frac{1}{4}\right\},\\
Z_i^{(4)}&=&\sum_{\nl+1\leq j\neq\jp\leq\nlun}\sum_{\substack{k=1 \\ k\neq j \\ k\neq j^{\prime}}}^{\n}
\left\{g(\Xik,\Xij)g(\Xij,\Xijp)+\frac{1}{12}\right\},\\
Z_i^{(5)}&=&\sum_{\nl+1\leq j\neq\jp\leq\nlun}\sum_{\substack{\kp=1 \\ \kp\neq j^{\prime} \\ \kp\neq j}}^{\n}
\left\{g(\Xijp,\Xij)g(\Xikp,\Xijp)+\frac{1}{12}\right\},\\
Z_i^{(6)}&=&\sum_{\nl+1\leq j\neq\jp\leq\nlun}\sum_{\substack{k=1 \\ k\neq j \\ k\neq j^{\prime}}}^{\n}
\left\{g(\Xik,\Xij)g(\Xik,\Xijp)-\frac{1}{12}\right\},\\
Z_i^{(7)}&=&\sum_{\nl+1\leq j\neq\jp\leq\nlun}\sum_{\substack{k=1 \\ k\neq j \\ k\neq j^{\prime}}}^{\n}\sum_{\substack{\kp=1 \\ \kp\neq j^{\prime} \\ \kp\neq j \\ \kp\neq k}}^{\n}
g(\Xik,\Xij)g(\Xikp,\Xijp).
\end{eqnarray*}
Then, we get that, for all $M>0$,
\begin{align*}
&\P\left(\left|\frac{\SnunL-\Esp{\SnunL}}{\sqrt{n}}\right| > M\right) 
 \leq \sum_{\ell=0}^{L}\sum_{t=1}^7\P\left(\frac{4}{\n^2}\frac{1}{\nlun-\nl}
\left|\sum_{i=1}^{\n}
Z_i^{(t)}\right| > \frac{M\sqrt{n}}{7(L+1)}\right) \\
& \leq \sum_{\ell=0}^{L}\sum_{t=1}^7\P\left(
\left|\sum_{i=1}^{\n}
Z_i^{(t)}\right| > \frac{M(\nlun-\nl)n^{5/2}}{28(L+1)}\right).
\end{align*}
Using the Markov inequality we get that
\begin{align*}
\P\left(\left|\frac{\SnunL-\Esp{\SnunL}}{\sqrt{n}}\right| > M\right) 
 \leq \sum_{\ell=0}^{L}\sum_{t=1}^7 \frac{28(L+1)}{M(\nlun-\nl)n^{5/2}}\Esp{
\left|\sum_{i=1}^{\n}
Z_i^{(t)}\right|}.
\end{align*}
By using the Cauchy-Schwarz inequality we obtain that
\begin{align*}
\P\left(\left|\frac{\SnunL-\Esp{\SnunL}}{\sqrt{n}}\right| > M\right) 
 \leq \sum_{\ell=0}^{L}\sum_{t=1}^7 \frac{28(L+1)}{M(\nlun-\nl)n^{5/2}}\left(\Esp{
\left(\sum_{i=1}^{\n}
Z_i^{(t)}\right)^2}\right)^{1/2}.
\end{align*}
We shall now give upper bounds for $\Esp{
\left(\sum_{i=1}^{\n}
Z_i^{(t)}\right)^2}$ for all $t\in\{1,\ldots,7\}$. 
First, by using Assertion~\ref{lem2:i} of Lemma \ref{Lem:Esp}, we get
\begin{align*}
&\Esp{\left(\sum_{i=1}^{\n}Z_i^{(1)}\right)^2}
=\sum_{i=1}^{\n}\sum_{i^{\prime}=1}^{\n}\Esp{Z_i^{(1)}Z_{i^\prime}^{(1)}}\\
&=\sum_{i=1}^{\n}\sum_{i^{\prime}=1}^{\n}
\sum_{j=\nl+1}^{\nlun}\sum_{\substack{k=1 \\ k\neq j}}^{\n}\sum_{r=\nl+1}^{\nlun}\sum_{\substack{s=1 \\s\neq r}}^{\n}
\Esp{\left\{
g(\Xik,\Xij)^2-\frac{1}{4}\right\}
\left\{
g(X_{i^{\prime}s},X_{i^{\prime}r})^2-\frac{1}{4}\right\}}=0.
\end{align*}
Then,
\begin{align*}
&\Esp{\left(\sum_{i=1}^{\n}Z_i^{(2)}\right)^2}
=\sum_{i=1}^{\n}\sum_{i^{\prime}=1}^{\n}\Esp{Z_i^{(2)}Z_{i^\prime}^{(2)}}\\
&=\sum_{i=1}^{\n}\sum_{i^{\prime}=1}^{\n}
\sum_{j=\nl+1}^{\nlun}\sum_{\substack{k=1 \\ k\neq j}}^{\n}\sum_{\substack{\kp=1 \\ \kp\neq k \\ k\neq j}}^{\n}
\sum_{r=\nl+1}^{\nlun}\sum_{\substack{s=1 \\ s\neq r}}^{\n}\sum_{\substack{s^{\prime}=1 \\ s^{\prime}\neq s \\ s\neq r}}^{\n}
\E\Big[\left\{g(\Xik,\Xij)
g(\Xikp,\Xij)-\frac{1}{12}\right\}\\
&\hskip180pt\left\{g(X_{i^{\prime}s}, X_{i^{\prime}r}) g(X_{i^{\prime}s^{\prime}}, X_{i^{\prime}r})-\frac{1}{12}\right\}\Big].
\end{align*}
The above expectation is equal to zero when the cardinality of the set
of indices $\left\{i,i^{\prime},j,k,k^{\prime},r,s,s^{\prime}\right\}$
equals 8 by Assertion~\ref{lem2:ii} of Lemma~\ref{Lem:Esp}. 
Hence, only the cases where the cardinality of this set is smaller or
equal to 7 have to be considered. 
Since 
$$
\left|\left(g(x,y)g(z,t)-\frac{1}{12}\right)\left(g(x^{\prime},y^{\prime})g(z^{\prime},t^{\prime})-\frac{1}{12}\right)\right|
\leq 1/9\leq 1,
$$ 
for all $x,y,z,t,x',y',z',t'\in\mathbb{R}$, we get that,
\[\Esp{\left(\sum_{i=1}^{\n}Z_i^{(2)}\right)^2} \leq n^7.\]
By using similar arguments and Assertion~\ref{lem2:iii} of Lemma \ref{Lem:Esp},
we get that
$$
\Esp{\left(\sum_{i=1}^{\n}Z_i^{(6)}\right)^2}\leq n^7.
$$
By using similar arguments as those used for bounding $\Esp{\left(\sum_{i=1}^{\n}Z_i^{(2)}\right)^2}$
and by Assertion~\ref{lem2:ii} of Lemma \ref{Lem:Esp}, we get that $\Esp{g(X,Y)g(Y,Z)}=-\Esp{g(X,Y)g(Z,Y)}=-1/12$. Hence,
$$
\Esp{\left(\sum_{i=1}^{\n}Z_i^{(4)}\right)^2}\leq n^7 \;\textrm{ and }\; \Esp{\left(\sum_{i=1}^{\n}Z_i^{(5)}\right)^2}\leq n^7.
$$
By using Assertion~\ref{lem2:i} of Lemma \ref{Lem:Esp}, 
we obtain that
\begin{align*}
&\Esp{\left(\sum_{i=1}^{\n}Z_i^{(3)}\right)^2}
=\sum_{i=1}^{\n}\sum_{i^{\prime}=1}^{\n}\Esp{Z_i^{(3)}Z_{i^\prime}^{(3)}}\\
&=\sum_{i=1}^{\n}\sum_{i^{\prime}=1}^{\n}
\sum_{\nl+1\leq j\neq\jp\leq\nlun}
\sum_{\nl+1\leq r\neq r^{\prime}\leq\nlun}
\E\Bigg[
\left\{g(\Xijp,\Xij)g(\Xij,\Xijp)+\frac{1}{4}\right\}\\
&\hskip180pt\left\{g(X_{i^{\prime}r^{\prime}},X_{i^{\prime}r})g(X_{i^{\prime}r},X_{i^{\prime}r^{\prime}})+\frac{1}{4}\right\}\Bigg]=0
\end{align*}
Finally, 
\begin{align*}
&\Esp{\left(\sum_{i=1}^{\n}Z_i^{(7)}\right)^2}
=\sum_{i=1}^{\n}\sum_{i^{\prime}=1}^{\n}\Esp{Z_i^{(7)}Z_{i^\prime}^{(7)}}\\
&=\sum_{i=1}^{\n}\sum_{i^{\prime}=1}^{\n}
\sum_{\nl+1\leq j\neq\jp\leq\nlun}\sum_{\substack{k=1 \\ k\neq j \\ k\neq j^{\prime}}}^{\n}\sum_{\substack{\kp=1 \\ \kp\neq j^{\prime} \\ \kp\neq j \\ \kp\neq k}}^{\n}
\sum_{\nl+1\leq r\neq r^{\prime}\leq\nlun}\sum_{\substack{s=1 \\ s\neq r \\ s\neq r^{\prime}}}^{\n}\sum_{\substack{s^{\prime}=1 \\ s^{\prime}\neq r^{\prime} \\ s^{\prime}\neq r \\ s^{\prime}\neq s}}^{\n}
\E\Big[
g(\Xik,\Xij)g(\Xikp,\Xijp)\\
&\hskip180pt
g(X_{i^{\prime}s},X_{i^{\prime}r})g(X_{i^{\prime}s^{\prime}}, X_{i^{\prime}r^{\prime}})\Big].
\end{align*}
The above expectation is null when the the cardinality of the set of indices 
$\left\{i,i^{\prime},j,j^{\prime},k,k^{\prime},r,r^{\prime},s,s^{\prime}\right\}$ is equal or greater than 8, by using Assertion \ref{lem2:0} of Lemma \ref{Lem:Esp}. 
Observe moreover that 
$$
\left|g(x,y)g(z,t)g(x^{\prime},y^{\prime})g(z^{\prime},t^{\prime})\right|| \leq 1/16\leq 1,
$$ 
for all $x,y,z,t,x',y',z',t'\in\mathbb{R}$. Therefore, we get,
\[\Esp{\left(\sum_{i=1}^{\n}Z_i^{(7)}\right)^2} \leq n^7.\]
Thus, we obtain that, for all $M>0$,
\begin{align*}
\P\left(\left|\frac{\SnunL-\Esp{\SnunL}}{\sqrt{n}}\right| > M\right)
& \leq \frac{1}{M}\sum_{\ell=0}^{L}\frac{5\times 28(L+1)n^{7/2}}{(\nlun-\nl)n^{5/2}}.
\end{align*}
Since for any $\ell$, $\frac{n}{\nlun-\nl}$ converges to $\frac{1}{\tau_{\ell+1}-\tau_{\ell}}$, the right-hand side of the above inequality tends to 0 
when $M\to\infty$, which concludes the proof.


\subsection{Proof of Equation (\ref{eq:dyn_recurs})} \label{proof_Sec.2.3}

By (\ref{eq:dyn:def}), 
$$
I_0(p)=\max_{1<n_1=p}\Delta(1:n_{1})=\Delta(1:p)
$$
and 
$$
I_{1}(p)=\max_{1<n_1<n_{2}=p}\{\Delta(1:n_{1})+\Delta(n_{1}+1:p)\}=\max_{1<n_1<n_{2}=p}\{I_0(n_1)+\Delta(n_{1}+1:p)\},
$$
which is (\ref{eq:dyn_recurs}) when $L=1$. By (\ref{eq:dyn:def}), 
$$
I_{2}(p)=\max_{1<n_1<n_{2}<n_{3}=p}\{\Delta(1:n_{1})+\Delta(n_{1}+1:n_2)+\Delta(n_{2}+1:p)\}.
$$
By using the previous expression of $I_{1}(p)$, we get that
$$
I_{2}(p)=\max_{1<n_{2}<p}\{I_1(n_2)+\Delta(n_{2}+1:p)\},
$$ 
which is (\ref{eq:dyn_recurs}) when $L=2$ and so on, which gives (\ref{eq:dyn_recurs}).


\section{Technical lemmas}\label{sec:tech_lemmas}
\begin{lem}\label{lemma_1rupture}
Let $h$ be defined by $h(x,y)=\1_{\{x \leq y\}}-\1_{\{y \leq x\}}$. Then,
\begin{enumerate}
\labitem{(i)}{lem1:i} $\Esp{h(X,Y)}=0$,
\labitem{(ii)}{lem1:ii} $h^2(X,Y)=1$ a.s.,
\labitem{(iii)}{lem1:iii} $\Esp{h(X,Y) h(X,Z)} = 1/3$,
\labitem{(iv)}{lem1:iv} $\Esp{h(X,Y) h(Z,Y)} = 1/3$, 
\labitem{(v)}{lem1:v} $\Esp{h(X,Y) h(Z,T)} = 0$,
\end{enumerate}
where $X$, $Y$, $Z$ and $T$ are i.i.d. random variables having a continuous distribution function.
\end{lem}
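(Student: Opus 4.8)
The plan is to dispatch the five assertions in the order \ref{lem1:ii}, \ref{lem1:i}, \ref{lem1:v}, \ref{lem1:iii}, \ref{lem1:iv}, using three elementary facts: the antisymmetry $h(x,y)=-h(y,x)$, the exchangeability of i.i.d.\ random variables, and the fact that a continuous distribution function forbids ties almost surely.

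For \ref{lem1:ii}, continuity of the common distribution function gives $\P(X=Y)=0$; on the complementary event exactly one of $\1_{\{X\leq Y\}}$ and $\1_{\{Y\leq X\}}$ equals $1$ and the other $0$, so $h(X,Y)\in\{-1,1\}$ and hence $h^2(X,Y)=1$ almost surely. For \ref{lem1:i}, since $(X,Y)$ and $(Y,X)$ have the same law and $h(Y,X)=-h(X,Y)$, we obtain $\Esp{h(X,Y)}=\Esp{h(Y,X)}=-\Esp{h(X,Y)}$, whence $\Esp{h(X,Y)}=0$. Then \ref{lem1:v} is immediate: $h(X,Y)$ and $h(Z,T)$ are measurable functions of the disjoint independent blocks $(X,Y)$ and $(Z,T)$, hence independent, so $\Esp{h(X,Y)h(Z,T)}=\Esp{h(X,Y)}\Esp{h(Z,T)}=0$ by \ref{lem1:i}.

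The substantive step is \ref{lem1:iii}. Off the null set on which two of $X,Y,Z$ coincide, one has $h(X,Y)=+1$ iff $X<Y$ and $h(X,Y)=-1$ iff $X>Y$, and similarly for $h(X,Z)$; consequently $h(X,Y)h(X,Z)=+1$ precisely when $Y$ and $Z$ lie on the same side of $X$, i.e.\ when $X$ is the smallest or the largest of the three, and $h(X,Y)h(X,Z)=-1$ when $X$ is the middle value. Conditioning on the relative order of $X,Y,Z$, each of the six strict orderings occurs with probability $1/6$ by the i.i.d.\ continuous assumption; $X$ is an extreme value in four of them and the middle value in the other two, so $\Esp{h(X,Y)h(X,Z)}=\tfrac46-\tfrac26=\tfrac13$. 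Finally \ref{lem1:iv} reduces to \ref{lem1:iii}: applying $h(a,b)=-h(b,a)$ to both factors gives $h(X,Y)h(Z,Y)=h(Y,X)h(Y,Z)$, which is exactly the product in \ref{lem1:iii} after relabelling the i.i.d.\ variables $X\leftrightarrow Y$, so its expectation is again $1/3$.

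I expect the only mildly delicate point to be the bookkeeping in \ref{lem1:iii}, namely checking that four of the six orderings of $(X,Y,Z)$ contribute $+1$ and two contribute $-1$; everything else is a direct consequence of antisymmetry, independence, and the almost sure absence of ties.
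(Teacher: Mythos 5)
Your proof is correct, but it reaches the two computational items by a different route than the paper. For (iii), the paper expands $h(X,Y)h(X,Z)$ into four products of indicators and integrates using the fact that $F(X)$ is uniform on $[0,1]$, getting $\Esp{(1-F(X))^2}-2\bigl(\Esp{F(X)}-\Esp{F(X)^2}\bigr)+\Esp{F(X)^2}=1/3$; you instead condition on the relative order of $X,Y,Z$, note that each of the six strict orderings has probability $1/6$, and count that the product equals $+1$ when $X$ is extreme (four orderings) and $-1$ when $X$ is the median (two orderings), giving $4/6-2/6=1/3$. Similarly for (i) you use exchangeability of $(X,Y)$ together with the antisymmetry $h(y,x)=-h(x,y)$, whereas the paper computes $\Esp{1-2F(X)}=0$ via the uniformity of $F(X)$. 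Your symmetry/rank-counting arguments are more elementary and make the source of the value $1/3$ transparent (it is just the probability that $X$ is extreme minus the probability that it is the median), and they avoid any integration; the paper's probability-integral-transform computation is more mechanical but generalizes more readily to related moments of the kernel (it is the same device used in Lemma 2 for $g$). Items (ii), (iv) and (v) are handled essentially as in the paper: (ii) from the a.s.\ absence of ties, (iv) by antisymmetry reducing to (iii), and (v) by independence of the blocks $(X,Y)$ and $(Z,T)$ together with (i).
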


\begin{proof}
\begin{enumerate}
\item[$(i)$] Let $X$ and $Y$ be i.i.d.\,random variables with cumulative distribution function $F$. We have:
$$\Esp{h(X,Y)}=\Esp{\1_{\{X \leq Y\}}}-\Esp{\1_{\{Y \leq
    X\}}}=\Esp{1-2F(X)}=0,$$
where we used that $F(X)$ is a uniform random variable on $[0,1]$.
\item[$(ii)$]  For all $x \neq y$ in $\R$, $h^2(x,y)=\left(\1_{\{x \leq y\}}-\1_{\{y \leq x\}}\right)^2 = \1_{\{x \leq y\}} + \1_{\{y\leq x\}} - 2\1_{\{x \leq y\}}\1_{\{y \leq x\}} = 1$. Consequently, $h^2(X,Y)=1$ a.s..
\item[$(iii)$] Let $X$, $Y$ and $Z$ be i.i.d.\,random variables with cumulative distribution function $F$. We have:
\begin{align*}
\Esp{h(X,Y) h(X,Z)} & = \Esp{\left(\1_{\{X \leq Y\}}-\1_{\{Y \leq X\}}\right)\left(\1_{\{X \leq Z\}}-\1_{\{Z \leq X\}}\right)} \\
& =  \Esp{\1_{\{X \leq Y\}} \1_{\{X \leq Z\}}} - \Esp{\1_{\{X \leq Y\}} \1_{\{Z \leq X\}}} \\
& \quad - \Esp{\1_{\{Y \leq X\}} \1_{\{X \leq Z\}}}+ \Esp{\1_{\{Y \leq X\}} \1_{\{Z \leq X\}}}\\
& = \Esp{(1-F(X))^2} - 2(\Esp{F(X)}-\Esp{F(X)^2}) + \Esp{F(X)^2} \\
& = 1/3 - 2(1/2-1/3) + 1/3 = 1/3,
\end{align*}
where we used that $F(X)$ is a uniform random variable on $[0,1]$.
\item[$(iv)$] Since $\Esp{h(X,Y) h(Z,Y)} =\Esp{h(Y,X)h(Y,Z)}=1/3$, the
  result comes from $(iii)$.
\item[$(v)$] By independance of $(X,Y)$ with $(Z,T)$,
$$\Esp{h(X,Y) h(Z,T)}=\Esp{h(X,Y)} \Esp{ h(Z,T)} = 0.$$
\end{enumerate}
\end{proof}

\begin{lem}\label{Lem:Esp} 
Let us define the function $g$ as $g(x,y)=\1_{\{x \leq y\}}-\frac{1}{2}$. 
Let $X$, $Y$ and $Z$ be i.i.d. random variables having a continuous distribution function. Then
\begin{enumerate}
\labitem{(i)}{lem2:0}  $\Esp{g(X,Y)}=0$,
\labitem{(ii)}{lem2:i} $g(X,Y)^2=\frac{1}{4}$ a.s.,
\labitem{(iii)}{lem2:ii} $\Esp{g(X,Y)g(Z,Y)}=\frac{1}{12}$,
\labitem{(iv)}{lem2:iii} $\Esp{g(X,Y)g(X,Z)}=\frac{1}{12}$.
\end{enumerate}
\end{lem}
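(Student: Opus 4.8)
The plan is to reduce all four assertions to Lemma~\ref{lemma_1rupture} by relating $g$ to the kernel $h$ of the two‑sample statistic. First I would note that for $x\neq y$ one has $\1_{\{y\leq x\}}=1-\1_{\{x\leq y\}}$, so that
\[
h(x,y)=\1_{\{x\leq y\}}-\1_{\{y\leq x\}}=2\1_{\{x\leq y\}}-1=2\,g(x,y)\qquad\text{whenever }x\neq y .
\]
Since $X$, $Y$, $Z$ are i.i.d.\ with a continuous distribution function, each of the events $\{X=Y\}$, $\{X=Z\}$, $\{Y=Z\}$ has probability zero, hence $g(X,Y)=\tfrac12 h(X,Y)$ a.s., and likewise for every pair among these variables. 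All four assertions then follow by inserting this identity into the corresponding statements of Lemma~\ref{lemma_1rupture}.

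Concretely: Assertion~\ref{lem2:0} comes from $\Esp{g(X,Y)}=\tfrac12\Esp{h(X,Y)}=0$ by Assertion~\ref{lem1:i}; Assertion~\ref{lem2:i} from $g(X,Y)^2=\tfrac14 h(X,Y)^2=\tfrac14$ a.s.\ by Assertion~\ref{lem1:ii}; Assertion~\ref{lem2:ii} from $\Esp{g(X,Y)g(Z,Y)}=\tfrac14\Esp{h(X,Y)h(Z,Y)}=\tfrac1{12}$ by Assertion~\ref{lem1:iv}; and Assertion~\ref{lem2:iii} from $\Esp{g(X,Y)g(X,Z)}=\tfrac14\Esp{h(X,Y)h(X,Z)}=\tfrac1{12}$ by Assertion~\ref{lem1:iii}.

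If one prefers a self‑contained argument, each identity can instead be obtained directly by conditioning on the variable common to the two factors and using that $F(X)$ is uniform on $[0,1]$ when the common c.d.f.\ $F$ is continuous. For instance, since $X$ and $Z$ are independent of each other and of $Y$, $\Esp{g(X,Y)g(Z,Y)\mid Y}=\bigl(F(Y)-\tfrac12\bigr)^2$, whence $\Esp{g(X,Y)g(Z,Y)}=\int_0^1\bigl(u-\tfrac12\bigr)^2\,du=\tfrac1{12}$, and likewise $\Esp{g(X,Y)g(X,Z)\mid X}=\bigl(\tfrac12-F(X)\bigr)^2$ yields the same value; Assertions~\ref{lem2:0} and \ref{lem2:i} are immediate. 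There is essentially no real obstacle here: the only subtle point is the treatment of ties, which is precisely where the continuity hypothesis on the c.d.f.\ enters, allowing one to pass from the pointwise relation $h=2g$ valid on $\{x\neq y\}$ to the almost sure identity used above.
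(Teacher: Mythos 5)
Your proof is correct, but its main route differs from the paper's. You reduce everything to Lemma~\ref{lemma_1rupture} via the identity $h(x,y)=2g(x,y)$ on $\{x\neq y\}$, upgraded to $g(X,Y)=\tfrac12 h(X,Y)$ a.s.\ thanks to the continuity of the common distribution (which rules out ties), so that each assertion is a quarter (or half) of the corresponding statement about $h$; the pairings with Assertions~\ref{lem1:i}--\ref{lem1:iv} are all matched correctly. The paper instead proves Lemma~\ref{Lem:Esp} from scratch: it expands the products of indicators and uses that $F(X)$ is uniform on $[0,1]$ (computing $\Esp{F(Y)^2}-\Esp{F(Y)}+\tfrac14=\tfrac1{12}$ for Assertion~\ref{lem2:ii}), and obtains Assertion~\ref{lem2:iii} from Assertion~\ref{lem2:ii} by the swap $\1_{\{X\leq Y\}}=1-\1_{\{Y\leq X\}}$; your secondary, conditioning-based argument is essentially this same direct computation. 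What each buys: your reduction is shorter and makes explicit that the two technical lemmas encode the same kernel up to the affine change $h=2g$, whereas the paper's direct computation is self-contained and, for Assertion~\ref{lem2:i}, gives the pointwise identity $g(x,y)^2=\tfrac14$ for all $x,y$ (your version only gives it a.s., which is all the lemma claims, but note the deterministic identity does not actually need continuity). One small caveat to keep in mind if you use the reduction: the relation $h=2g$ genuinely fails on the diagonal ($h(x,x)=0$ while $2g(x,x)=1$), so the continuity hypothesis is doing real work exactly where you said it does.
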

\begin{proof}
\begin{enumerate}
\item[$(i)$] $\Esp{g(X,Y)}=\Esp{F(Y)}-1/2=0$, since $F(Y)$ is a uniform random variable on $[0,1]$.
\item[$(ii)$] For all $x,y$ in $\R$, $g(x,y)^2=\left(\1_{\{x \leq y\}}-\frac{1}{2}\right)^2 = \1_{\{x \leq y\}} + \frac{1}{4} -\1_{\{x \leq y\}}=\frac{1}{4}$. Consequently, $g^2(X,Y)=\frac{1}{4}$ a.s..
\item[$(iii)$] Let $X$, $Y$ and $Z$ be i.i.d.\,random variables with cumulative distribution function $F$. We have:
\begin{align*}
\Esp{g(X,Y) g(Z,Y)} & = \Esp{\left(\1_{\{X \leq Y\}}-\frac{1}{2}\right)\left(\1_{\{Z \leq Y\}}-\frac{1}{2}\right)}\\
&= \Esp{\1_{\{X \leq Y\}}\1_{\{Z \leq Y\}}}-\frac{1}{2}\Esp{\1_{\{Z \leq Y\}}}-\frac{1}{2}\Esp{\1_{\{X \leq Y\}}}+\frac{1}{4}\\
&=\Esp{F(Y)^2}-\Esp{F(Y)}+\frac{1}{4}\\
&=\frac{1}{3}-\frac{1}{2}+\frac{1}{4}=\frac{1}{12},
\end{align*}
where we used that $F(X)$ is a uniform random variable on $[0,1]$.
\item[$(iv)$] Note that
\begin{align*}
\Esp{g(X,Y) g(X,Z)} & = \Esp{\left(\1_{\{X \leq Y\}}-\frac{1}{2}\right)\left(\1_{\{X \leq Z\}}-\frac{1}{2}\right)}\\
&=\Esp{\left(1-\1_{\{Y \leq X\}}-\frac{1}{2}\right)\left(1-\1_{\{Z \leq X\}}-\frac{1}{2}\right)}\\
&=\Esp{\left(\frac{1}{2}-\1_{\{Y \leq X\}}\right)\left(\frac{1}{2}-\1_{\{Z \leq X\}}\right)}\\
&=\Esp{g(Y,X) g(Z,X)}=\frac{1}{12},
\end{align*}
by $(iii)$.
\end{enumerate}
\end{proof}




\bibliographystyle{chicago}
\bibliography{Biblio}

\begin{thebibliography}{}

\bibitem[\protect\citeauthoryear{Basseville and Nikiforov}{Basseville and
  Nikiforov}{1993}]{Basseville:1993}
Basseville, M. and I.~V. Nikiforov (1993).
\newblock {\em Detection of Abrupt Changes: Theory and Applications}.
\newblock Prentice-Hall.

\bibitem[\protect\citeauthoryear{Bellman}{Bellman}{1961}]{Bellman:1961}
Bellman, R. (1961).
\newblock On the approximation of curves by line segments using dynamic
  programming.
\newblock {\em Communications of the ACM\/}~{\em 4\/}(6), 284.

\bibitem[\protect\citeauthoryear{Cleynen, Dudoit, and Robin}{Cleynen
  et~al.}{2013}]{Cleynen:2013}
Cleynen, A., S.~Dudoit, and S.~Robin (2013).
\newblock Comparing segmentation methods for genome annotation based on rna-seq
  data.
\newblock {\em Journal of Agricultural, Biological, and Environmental
  Statistics\/}~{\em 19\/}(1), 101--118.

\bibitem[\protect\citeauthoryear{Dixon, Selvaraj, Yue, Kim, Li, Shen, Hu, Liu,
  and Ren}{Dixon et~al.}{2012}]{dixon2012topological}
Dixon, J.~R., S.~Selvaraj, F.~Yue, A.~Kim, Y.~Li, Y.~Shen, M.~Hu, J.~S. Liu,
  and B.~Ren (2012).
\newblock Topological domains in mammalian genomes identified by analysis of
  chromatin interactions.
\newblock {\em Nature\/}~{\em 485\/}(7398), 376--380.

\bibitem[\protect\citeauthoryear{Kay}{Kay}{1993}]{kay:1993}
Kay, S. (1993).
\newblock {\em Fundamentals of statistical signal processing: detection
  theory}.
\newblock Prentice-Hall, Inc.

\bibitem[\protect\citeauthoryear{Lehmann and D'Abrera}{Lehmann and
  D'Abrera}{2006}]{lehmann2006nonparametrics}
Lehmann, E.~L. and H.~J. D'Abrera (2006).
\newblock {\em Nonparametrics: statistical methods based on ranks}.
\newblock Springer New York.

\bibitem[\protect\citeauthoryear{L\'{e}vy-Leduc, Delattre, Mary-Huard, and
  Robin}{L\'{e}vy-Leduc et~al.}{2014}]{levy2014two}
L\'{e}vy-Leduc, C., M.~Delattre, T.~Mary-Huard, and S.~Robin (2014).
\newblock Two-dimensional segmentation for analyzing {H}i{C} data.
\newblock {\em Bioinformatics\/}~{\em 30\/}(17), 386--392.

\bibitem[\protect\citeauthoryear{L\'evy-Leduc and Roueff}{L\'evy-Leduc and
  Roueff}{2009}]{Levy:Roueff:2009}
L\'evy-Leduc, C. and F.~Roueff (2009).
\newblock Detection and localization of change-points in high-dimensional
  network traffic data.
\newblock {\em Ann. Applied Statist.\/}~{\em 3\/}(2), 637--662.

\bibitem[\protect\citeauthoryear{Lieberman-Aiden, Van~Berkum, Williams,
  Imakaev, Ragoczy, Telling, Amit, Lajoie, Sabo, Dorschner,
  et~al.}{Lieberman-Aiden et~al.}{2009}]{lieberman2009comprehensive}
Lieberman-Aiden, E., N.~L. Van~Berkum, L.~Williams, M.~Imakaev, T.~Ragoczy,
  A.~Telling, I.~Amit, B.~R. Lajoie, P.~J. Sabo, M.~O. Dorschner, et~al.
  (2009).
\newblock Comprehensive mapping of long-range interactions reveals folding
  principles of the human genome.
\newblock {\em science\/}~{\em 326\/}(5950), 289--293.

\bibitem[\protect\citeauthoryear{Lung-Yut-Fong, L{\'e}vy-Leduc, and
  Capp{\'e}}{Lung-Yut-Fong et~al.}{2015}]{lun15}
Lung-Yut-Fong, A., C.~L{\'e}vy-Leduc, and O.~Capp{\'e} (2015).
\newblock Homogeneity and change-point detection tests for multivariate data
  using rank statistics.
\newblock {\em Journal de la Soci\'et\'e Fran\c{c}aise de Statistique\/}~{\em
  156\/}(4), 133--162.

\bibitem[\protect\citeauthoryear{Matteson and James}{Matteson and
  James}{2014}]{matteson:2014}
Matteson, D.~S. and N.~A. James (2014).
\newblock A nonparametric approach for multiple change point analysis of
  multivariate data.
\newblock {\em Journal of the American Statistical Association\/}~{\em
  109\/}(505), 334--345.

\bibitem[\protect\citeauthoryear{Picard, Robin, Lavielle, Vaisse, and
  Daudin}{Picard et~al.}{2005}]{picard:robin:lavielle:vaisse:daudin:2005}
Picard, F., S.~Robin, M.~Lavielle, C.~Vaisse, and J.-J. Daudin (2005).
\newblock A statistical approach for array {CGH} data analysis.
\newblock {\em BMC Bioinformatics\/}~{\em 6\/}(1), 27.

\bibitem[\protect\citeauthoryear{Szekely and Rizzo}{Szekely and
  Rizzo}{2005}]{Szekely:Rizzo:2005}
Szekely, J.~G. and L.~M. Rizzo (2005).
\newblock Hierarchical clustering via joint between-within distances: Extending
  ward's minimum variance method.
\newblock {\em Journal of Classification\/}~{\em 22\/}(2), 151--183.

\bibitem[\protect\citeauthoryear{Tartakovsky, Rozovskii, Blazek, and
  Kim}{Tartakovsky et~al.}{2006}]{Tartakovsky:2006}
Tartakovsky, A., B.~Rozovskii, R.~Blazek, and H.~Kim (2006).
\newblock A novel approach to detection of intrusions in computer networks via
  adaptive sequential and batch-sequential change-point detection methods.
\newblock {\em IEEE Trans. Signal Process.\/}~{\em 54\/}(9), 3372 -- 3382.

\bibitem[\protect\citeauthoryear{van~der Vaart}{van~der Vaart}{1998}]{vdV2000}
van~der Vaart, A.~W. (1998).
\newblock {\em Asymptotic statistics}.
\newblock Cambridge Series in Statistical and Probabilistic Mathematics.
  Cambridge University Press.

\bibitem[\protect\citeauthoryear{Vert and Bleakley}{Vert and
  Bleakley}{2010}]{vert:bleakley:2010}
Vert, J. and K.~Bleakley (2010).
\newblock Fast detection of multiple change-points shared by many signals using
  group {LARS}.
\newblock In {\em Advances in Neural Information Processing Systems 23}.

\end{thebibliography}

\end{document}